\documentclass[12pt]{amsart}
\usepackage{amsmath,amsfonts,amssymb} 
\usepackage{pdfpages}
\usepackage{amsbsy}
\usepackage{amscd}

\usepackage{amsthm}
\usepackage{bbold}
\usepackage{mathrsfs}
\usepackage{verbatim}
\usepackage [all]{xy}
\usepackage[top=1in,bottom=1in,left=1.25in,right=1.25in]{geometry}

\usepackage{tikz-cd}
\usepackage[pdftex]{hyperref}
\hypersetup{colorlinks,citecolor=blue,linktocpage,hyperindex=true,backref=true}

\newcommand{\hide}[1]{}


\theoremstyle{plain}
\newtheorem{thmi}{Theorem}
\newtheorem{cori}[thmi]{Corollary}
\newtheorem{thm}{Theorem}[section]
\newtheorem{prop}[thm]{Proposition}

\newtheorem{cor}[thm]{Corollary}
\newtheorem{lem}[thm]{Lemma}

\newtheorem*{Cartan-Dieudonne}{Cartan-Dieudonne theorem {\rm (\cite [Chapter I, Theorem 7.1]{Lam})}}
\newtheorem*{Local-Reduction-Lemma}{The Local Reduction Lemma}

\theoremstyle{definition}

\newtheorem{rem}[thm]{Remark}

\newtheorem{rem*}{Remark}

\theoremstyle{remark}

\newcommand{\HH}{{\mathbb H}}

\newcommand{\cF}{{\mathcal F}}
\newcommand{\CC}{{\mathbb C}}
\newcommand{\QQ}{{\mathbb Q}}
\newcommand{\RR}{{\mathbb R}}

\newcommand{\tM}{\widetilde{M}}
\newcommand{\PP}{{\mathbb P}}

\newcommand{\cT}{{\mathcal T}}

\newcommand{\ra}{\rightarrow}

\title{Twistor lines in the period domain of complex tori}

\author{Nikolay Buskin}

\address{Department of Mechanics and Mathematics, Novosibirsk State University, 1 Pirogova st., Novosibirsk, 
630090, Russia}

\email{nvbuskin@gmail.com}
\author{Elham Izadi}

\address{Department of Mathematics, University of California San Diego, 9500 Gilman Drive \# 0112, La Jolla, CA 92093-0112, USA}

\email{eizadi@math.ucsd.edu}
\setcounter{tocdepth}{1}
\begin{document}
\begin{abstract}
As in the case of irreducible holomorphic symplectic manifolds, the period domain $Compl$ of 
compact complex tori of even dimension $2n$ contains twistor lines. These are special $2$-spheres parametrizing complex tori whose complex structures arise from a given quaternionic structure. In analogy with the case of irreducible holomorphic symplectic manifolds, we show that the periods of any two complex tori can be joined by a {\em generic} chain of twistor lines. We also prove a criterion of twistor path connectivity of loci in $Compl$
where a fixed second cohomology class stays of Hodge type (1,1). 
Furthermore, we show that twistor lines are holomorphic submanifolds of $Compl$, of degree $2n$ in the Pl\"ucker embedding of $Compl$.
\end{abstract}
\maketitle
\tableofcontents
\section*{Introduction}

Let $M$ be a Riemannian manifold of real dimension $4m$ with metric $g$. Then $M$ is called {\it hyperk\"ahler} 
with  respect to $g$ (see \cite[p. 548]{Hitchin})
if there exist complex structures $I$, $J$ and $K$ on $M$, such that 
$I,J,K$ are covariantly constant and 
are isometries of the tangent bundle $TM$ with respect to $g$, satisfying
the quaternionic relations
\[
I^2=J^2=K^2=-1, \quad IJ=-JI=K. 
\]
We call the ordered triple $(I,J,K)$ {\it a hyperk\"ahler
structure on $M$ compatible with $g$}.

A hyperk\"ahler structure $(I,J,K)$ gives rise to a sphere $S^2$ of complex structures on $M$:
$$S^2=\{aI+bJ+cK| a^2+b^2+c^2=1\}.$$

We call the family $\mathcal M=\{(M,\lambda)| \lambda \in S^2\}  \rightarrow S^2$ a {\it twistor family over the twistor sphere $S^2$}. 
The family $\mathcal M$ can be endowed with a complex structure,
so that it becomes a complex manifold and the fiber $\mathcal M_\lambda$ is biholomorphic to
the complex manifold $(M,\lambda)$, see \cite[p. 554]{Hitchin}.
For every $\lambda=aI+bJ+cK \in S^2$, the closed alternating form $g(\lambda\cdot,\cdot)$
determines a K\"ahler class in $H^{1,1}((M,\lambda),\RR)$.

The known examples of compact hyperk\"ahler manifolds are even-dimensional complex tori and irreducible holomorphic symplectic manifolds ({\it IHS manifolds}).
We recall that an IHS manifold is a simply connected compact K\"ahler manifold $M$  with $H^0(M,{\Omega}_M^2)$ generated by
an everywhere non-degenerate holomorphic 2-form.
Examples of IHS manifolds include  $K3$ surfaces and, more generally, Hilbert schemes of points on $K3$
surfaces, generalized Kummer varieties.

For IHS manifolds and complex tori there exist well-defined period domains, carrying the structure of a complex manifold. Every twistor family $\mathcal M$
determines an embedding of the base $S^2$ into the corresponding period domain 
as a 1-dimensional complex submanifold (for IHS manifolds this is known and for complex tori we give a proof of this in Theorem 
\ref{Theorem-tw-path-conn}). The image of such an embedding is called a {\it twistor line}.
The period of a hyperk\"ahler manifold is called {\it generic}, if the corresponding manifold has trivial N\'eron-Severi group. 
A path of twistor lines is an ordered sequence $S_1, \ldots , S_m$ of twistor spheres such that $S_i \cap S_{i+1}$ is non-empty if $1\leq i\leq m-1$. Such a path is called {\em generic}, if the periods at intersections of successive lines in the path are generic.

In the case of IHS manifolds it is known that any two periods can be connected
by a generic path of twistor lines
 (see \cite{Bourbaki}, which is an exposition of Verbitsky \cite{Verb-Torelli}).  In \cite{Verb-Torelli} generic twistor path connectivity was used to prove surjectivity of the corresponding 
period mapping, which was a part of the Torelli theorem for IHS manifolds proved there. Another application was given in \cite{Buskin}, where generic twistor path connectivity, together with a result of Verbitsky cited below, was used to prove, via deformations of sheaves, that every rational Hodge isometry between two $K3$-surfaces is algebraic. 

The period domain $Compl$ for complex tori of dimension $2n$ is a real analytic open subset of the complex
Grassmanian $Gr(2n,4n)$, see the exact definition in Section 
\ref{Space-of-twistor-lines:-dimension-count}. It has two connected components $Compl^+$ and $Compl^-$.
Our first main result is

\begin{thmi}
\label{Theorem-tw-path-conn}\begin{enumerate}
\item Any twistor sphere on a complex torus naturally embeds into $Compl$ as a complex 1-dimensional submanifold. The degree of twistor lines in $Gr(2n,4n)$
with respect  to the Pl\"ucker embedding is $2n$.
\item In each of the two connected components of $Compl$ any two periods can be connected by a generic path
of twistor lines.
\end{enumerate}
\end{thmi}

For an alternating 2-form $\Omega$, we let $Compl_\Omega$
be the locus of all periods $I \in Compl$ such that the form $\Omega$ determines a cohomology class
of Hodge type (1,1) in the second cohomology of tori with period (i.e., complex structure, see Section \ref{Space-of-twistor-lines:-dimension-count}) $I$. 

We let  
$G_\Omega \subset G \cong GL_{4n} (\RR)$ be the group of automorphisms of $\Omega$ with $G^0_\Omega$ its connected component of the identity. 
For $I\in Compl_\Omega$ we define a hermitian form 
$h_I$ by setting $h_I(u,v)=\Omega(u,Iv)-i\Omega(u,v)$.
We let $(n_+,n_-,n_0)$ be the signature of $h_I$, 
$n_0$ depends on $\Omega$ only and is the same for all $I \in Compl_\Omega$.  
We let $U(n_+,n_-,n_0)\subset G^0_\Omega$ 
be the automorphism group of $h_I$.

We set $Compl_\Omega^{\pm}=Compl_\Omega \cap Compl^{\pm}$. Our second main result is 

\begin{thmi} \label{mainthm2}
\label{Tw-path-conn-Compl-Omega}
\begin{enumerate}
\item The locus $Compl_\Omega^{\pm}$ has $2n+1-n_0$ connected components, 
indexed by  the signature $(n_+,n_-,n_0)$ 
of $h_I$ for all periods $I$ in the corresponding component.

The connected component
of $Compl_\Omega^{\pm}$, where $h_I$ has signature 
$(n_+,n_-,n_0)$, is naturally diffeomorphic to the homogeneous manifold $G^0_\Omega/U(n_+,n_-,n_0)$
and is a  smooth complex submanifold in $Compl$. 

\item If $n_0$ is even, there is precisely one connected component of $Compl_\Omega^{\pm}$, 
that contains twistor lines. This component corresponds to the signature $\left(n-\frac{n_0}{2},n-\frac{n_0}{2},n_0\right)$ 
and is twistor path connected. 
If $n _0$ is odd, there are no connected components of $Compl_\Omega^{\pm}$
containing a twistor line.
\end{enumerate}
\end{thmi}

An immediate consequence of this theorem is that K\"ahler classes on complex tori do not stay of Hodge type in twistor families.
An analogous result for $K3$ surfaces is well known. 

In fact, combining Theorem \ref{mainthm2} with the result of Verbitsky (see below) we obtain the following

\begin{cori}
Let $M$ be a hyperk\"ahler manifold and $\omega = g (I \cdot, \cdot)$ a K\"ahler class
on $M$ associated to a complex structure $I$ of the twistor family. 
If an $\omega$-slope-polystable bundle over $M$ extends to the twistor family $\mathcal M$, then, either its first Chern class is zero, or the hermitian form associated to its first Chern class has signature of the form $\left(n-\frac{n_0}{2},n-\frac{n_0}{2},n_0\right)$.
\end{cori}

We briefly recall Verbitsky's result.
Assume $M$ is a hyperk\"ahler manifold (not necessarily simply connected) with Riemannian metric $g(\cdot,\cdot)$ and a fixed complex structure $I$, let us denote again by $\omega$ the K\"ahler class on $M$ represented by the 
form $\omega(\cdot,\cdot)=g(I\cdot,\cdot)$. 
Then, 
by definition, we have a sphere of complex structures on $M$, the corresponding twistor family $\mathcal M \to S^2$, and a K\"ahler class 
represented by the 
form $\omega_\lambda(\cdot,\cdot)=g(\lambda \cdot,\cdot)$ on the fiber
$M_\lambda=(M,\lambda)$ for each $\lambda \in S^2$, such that $M_I=M$ and $\omega_I=\omega$. By definition, the class $\omega_\lambda$
(considered up to multiplication by a positive scalar) is {\it the K\"ahler class on $M_\lambda$}. 

Recall that a vector bundle on $M$ is called $\omega$-slope-polystable if it is isomorphic to a direct sum of $\omega$-slope-stable bundles with equal slopes.
The following theorem was proved in  \cite[Thm. 3.17, Thm. 3.19]{Verb-book}. 
\vskip5pt

{\bf Theorem}. {\it Let $F$ be an $\omega$-slope-polystable vector bundle over a hyperk\"ahler manifold $M$. If the Chern classes $c_1(F)$ and $c_2(F)$ remain of Hodge type for all complex structures $\lambda$ on $M$ belonging to the sphere $S^2$, then the bundle $F$ extends to a vector bundle $\cF$ over $\mathcal M$. Furthermore, for all $\lambda \in S^2$, the restriction $\cF |_{M_\lambda}$ is an $\omega_\lambda$-slope-polystable bundle}.
\vskip5pt

A purely
geometric motivation behind the study of the connectivity, besides the application to deforming sheaves, 
is discussed in Remark \ref{sub-riemannian-remark} below. 

Let us say few words on how proving the connectivity for the period
domain of complex tori differs from that for the period domains of IHS manifolds. 
For 
IHS manifolds,
the proof of connectivity relies on the realization of the period domain
as the Grassmanian of oriented positive real 2-planes in the second cohomology, where positivity
is with respect to the Beauville-Bogomolov bilinear form, again see \cite{Bourbaki}. 
This bilinear form provides a 
very convenient tool for investigating the local topology of this period domain with respect to the problem of twistor path connectivity. 

 For complex tori, however, we 
do not have such a realization of their period domain and cannot use a similar argument. Here, instead,
we need to use the (less refined) structure of  the period domain of complex tori as a homogeneous space (which, certainly, the period
domain of an IHS manifold is, as well). This homogeneous structure allows
us to proceed with proving the twistor path connectivity in steps that are, in their broad strokes,  parallel to the steps
of the proof of the twistor path connectivity for the period domains of IHS manifolds.

\begin{rem*}
As shown by Beauville \cite{Beauvillec1=0} (using results of Cheeger and Gromoll), a general compact hyperk\"ahler manifold $M$ has a finite \'etale cover $\tM$ isomorphic to the product of a complex torus and a finite number of irreducible hyperk\"ahler manifolds. Since the irreducible hyperk\"ahler manifolds are simply connected, one can easily see that the N\'eron-Severi group of $\tM$ is isomorphic to the direct sum of the N\'eron-Severi groups of its factors. Twistor families for $M$ give rise to twistor families for $\tM$ and its factors. Vice-versa, twistor families for (any of) the factors of $\tM$ give rise, in a generally non-unique way, to 
twistor families for $\tM$ and $M$. One can then deduce the 
generic twistor path connectivity of the moduli space of $M$ from the generic twistor path connectivity of the moduli spaces of complex tori and 
those of irreducible hyperk\"ahler manifolds.
\end{rem*}

\begin{rem*}
\label{sub-riemannian-remark}
There is a relation between twistor path connectivity and rational connectedness, that is, the
connectivity of points of a complex manifold by chains of rational curves (for the latter see, for example, \cite{Kollar}).
The  Grassmanian $Gr(2n,4n)$ being
a rational variety ($Gr(2n,4n) \overset{\sim}{\dashrightarrow} \PP^{4n^2}$), is certainly  rationally connected. 
However, rational connectedness is  a weaker property than twistor path connectivity. 
Indeed, the variety of lines in $\PP^{4n^2}$, passing through a
fixed point, has complex dimension $4n^2-1$ (and the dimension of the variety
of rational curves of degree $d>1$ in $\PP^{4n^2}$, passing through a fixed point, is even larger), thus its real dimension is $8n^2-2$. 
On the other hand,
by Corollary \ref{Corollary-M-I-dimension}, the real dimension 
of the space of all twistor lines, passing through a fixed point in the period domain,
is $4n^2-1$. Thus, through a given point, there are ``half as many'' twistor lines as general rational curves,
and the problem of twistor path connectivity may be roughly considered as a ``sub-Riemannian''
version of rational connectedness problem.
\end{rem*}

The plan of the paper is as follows.

In Section \ref{Space-of-twistor-lines:-dimension-count} we describe our basic set-up, the complex-analytic structure
of $Compl$ considered as a real-analytic submanifold in $End(V_\RR)\cong End(\RR^{4n})$ and show that the twistor spheres $S^2 \subset Compl$ are complex submanifolds (Corollary \ref{corSanalytic}). We define the 
union $C_I$
of all twistor spheres passing through a given period $I$ and show that $G_I$
 acts transitively on the set of twistor spheres containing $I$. 
The sets $C_I$, which are real-analytic subsets in $Compl$, will serve as building blocks for constructing twistor paths.

 In Section \ref{Twistor-path-connectivity-of-Compl} we provide an argument, illustrated by a picture, that the set of periods reachable from a given one $I$ by means of all possible triples of consecutive twistor spheres contains an open neighborhood
of the initial period. Then, the connectedness of the period domain allows us to conclude that any two periods can be connected by a path of twistor lines.
The three spheres argument is essentially due to the transversality formulated in its most general form in Proposition \ref{Proposition-general-transversality},
and it is somewhat analogous to the ``three lines argument'' in \cite[Prop. 3.7]{Bourbaki}. 

In Section \ref{Connectivity-by-generic-twistor-paths}
we prove the generic connectivity part of Theorem \ref{Theorem-tw-path-conn}.
The idea of the proof is to show that the space of triples of consecutive
twistor spheres connecting a fixed pair of periods is not the union of
its two subspaces for which the first or, respectively, the second, of the two joint points 
belongs to the locus of tori with nontrivial N\'eron-Severi group in the period domain. 
Again, the transversality, stated in Proposition \ref{Proposition-general-transversality},
constitutes the main tool for proving generic connectivity.

In Section \ref{Twistor-spheres-are-complex-submanifolds} we prove that the degree of twistor lines in $Gr(2n,4n)$
with respect  to the Pl\"ucker embedding is $2n$. Here we use the fact that the group $G$ acts transitively on the set of all twistor lines,
 preserving the degree, together with an explicit computation on an explicit example.

In Section \ref{Twistor-path-connectivity-of-Compl-Omega} we prove the statement of Theorem \ref{Tw-path-conn-Compl-Omega}, the 
proof of connectivity is again based on the above mentioned ``three lines argument''. 

The authors acknowledge debts of gratitude: to Eyal Markman for suggesting the problem of twistor path connectivity
of the period domain, and to the referee for the content of Remarks \ref{RemPlucker}, \ref{remreferee}, and pointing out an error in the original calculation of the degree of twistor lines. The authors are indebted to Eyal Markman and the referee for many useful comments that helped improve the exposition of the paper.

\section{The space of twistor spheres}
\label{Space-of-twistor-lines:-dimension-count}
\subsection{} Let $A$ be a complex torus of dimension $2n$. Denote 
by $V_\RR$ the real tangent space $T_{\RR,0}A$ and
by $V$ the complex tangent space $T_{\CC,0}A\subset T_{\RR,0}A \otimes \CC$,
so that $\dim_\RR V_\RR = 2\dim_\CC V=4n$.

The points of $Compl$ are $2n$-dimensional complex planes, realizing the real weight 1 Hodge structures on the complex
$4n$-dimensional vector space $V_\CC:=V_\RR \otimes \CC$.  The open subset $Compl$ of $Gr(2n,4n)$ consists of those $2n$-planes
in $V_\CC$ that do not intersect the real subspace $V_\RR \subset V_\CC$.
Explicitly, a complex structure $I \colon V_\RR \rightarrow V_\RR$ corresponds to 
the $2n$-plane $(\mathbb{1}-iI)V_\RR \in Gr(2n,4n)=Gr(2n,V_\CC)$ where $\mathbb{1}$ denotes the identity map.
 As a homogeneous space, $Compl$ is the orbit of $I$ under the conjugation action of $G := GL(V_\RR)$:
$Compl\cong G/G_I$, where $G_I\cong GL_{2n}(\CC)$ is the stabilizer of $I$. This orbit
is endowed with a complex manifold structure such that the above embedding $I\mapsto (\mathbb{1}-iI)V_\RR \in Gr(2n,4n)$ is biholomorphic, see \cite[p. 31]{BirkenhakeLange99} and Proposition \ref{propCompl}.
 The period domain
$Compl$ consists of two connected components $Compl^+$ and 
$Compl^-$, corresponding to the components $GL^+(V_\RR)$
and $GL^-(V_\RR)$ of $G$.

Assume that
$J\colon V_\RR \rightarrow V_\RR$ is a complex structure anticommuting with $I$. 
Then $I$ and $J$ determine a twistor sphere $$S(I,J) :=\{aI+bJ+cK|a^2+b^2+c^2=1\},$$ where $K=IJ$.
In general, for two complex structures $I_1,I_2$, not necessarily anticommuting, such that $I_1 \neq \pm I_2$, 
and such that they are contained in the same twistor sphere $S$, we will also denote
this sphere by $S(I_1,I_2)$.
Our notation is justifed by the 
following lemma, whose proof is an exercise that we leave to the reader.

\begin{lem}\label{lempair}
Every twistor sphere $S$ is uniquely determined by any pair of non-proportional complex structures $I_1,I_2 \in S$.
\end{lem}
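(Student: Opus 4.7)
The plan is to exhibit the real $3$-dimensional subspace $W := \mathrm{span}_\RR(I,J,K) \subset \mathrm{End}_\RR(V_\RR)$ underlying $S = S(I,J,K)$ as an invariant of the sphere that is reconstructible from any two non-proportional $I_1,I_2 \in S$. The sphere itself is then recovered from $W$ via an intrinsic condition, completing the proof.

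First I would observe that $S$ is precisely the set $\{L \in W : L^2 = -\mathbb{1}\}$. Indeed, expanding $L = aI + bJ + cK$ with the quaternionic relations $I^2=J^2=K^2=-\mathbb{1}$ and $IJ=-JI=K$ (and cyclic permutations) gives $L^2 = -(a^2+b^2+c^2)\,\mathbb{1}$, so $L^2 = -\mathbb{1}$ holds iff $(a,b,c)$ lies on the unit sphere of $\RR^3$. Hence if $W$ is determined, so is $S$, and it suffices to reconstruct $W$ from $I_1,I_2$.

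Writing $I_\alpha = a_\alpha I + b_\alpha J + c_\alpha K$ with $v_\alpha := (a_\alpha,b_\alpha,c_\alpha) \in \RR^3$ and expanding, the quaternionic relations yield the standard quaternion-multiplication decomposition
\[
I_1 I_2 \;=\; -\langle v_1, v_2 \rangle\,\mathbb{1} \;+\; L,
\]
where $\langle\cdot,\cdot\rangle$ is the standard inner product on $\RR^3$ and $L \in W$ has coordinates $v_1 \times v_2$ in the basis $(I,J,K)$. Symmetrizing and antisymmetrizing gives $I_1 I_2 + I_2 I_1 = -2\langle v_1, v_2\rangle\,\mathbb{1}$ and $\tfrac{1}{2}(I_1 I_2 - I_2 I_1) = L \in W$.

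Finally, the non-proportionality hypothesis on $I_1,I_2$ is equivalent to $v_1,v_2$ being linearly independent in $\RR^3$, so $v_1 \times v_2 \neq 0$ and is linearly independent from both. Thus $I_1, I_2, L$ form a basis of $W$, and
\[
W \;=\; \mathrm{span}_\RR\bigl(I_1,\; I_2,\; I_1 I_2 - I_2 I_1\bigr),
\]
depending only on $I_1$ and $I_2$. Combined with the intrinsic description $S = \{L \in W : L^2 = -\mathbb{1}\}$ this determines $S$ uniquely. No serious obstacle is anticipated: the only point to be careful about is that the computation above genuinely takes place in $\mathrm{End}_\RR(V_\RR)$, which is fine because the subalgebra generated by $I,J,K$ is an isomorphic copy of the quaternions by the quaternionic relations.
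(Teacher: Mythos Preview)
Your proof is correct and follows essentially the same idea as the paper: both recover the $3$-plane $W=\langle I,J,K\rangle_\RR$ from $I_1,I_2$ via the quaternion multiplication identity $u\cdot v=-(u,v)\,\mathbb{1}+u\times v$ in $\HH(I,J)\subset\mathrm{End}\,V_\RR$, and then read off $S$ from $W$. The packaging differs slightly: the paper first orthonormalizes inside the $2$-plane $\langle I_1,I_2\rangle_\RR$ to obtain anticommuting $u,v$ and then uses $u\cdot v$ as the third basis vector, whereas you isolate the cross-product part directly as the commutator $\tfrac12(I_1I_2-I_2I_1)$ and recover $S$ via the intrinsic description $S=\{L\in W:L^2=-\mathbb{1}\}$. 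Your route avoids having to argue that the inner product on $W$ is independent of the chosen quaternionic basis, which makes it marginally cleaner; the paper's version, in turn, makes that intrinsic inner product explicit, which is of independent interest elsewhere in the paper.
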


\subsection{} \label{subsecbasis} Let $J$ be a complex structure that anti-commutes with $I$. Then $V_\RR$ splits, in a non-unique way, as a direct sum of 4-dimensional subspaces of the form $\langle v,Iv,Jv,IJv\rangle$
for nonzero vectors $v\in V_\RR$, and the union of the specified bases of the 4-subspaces
forms a basis of $V_\RR$. In this basis the matrix of $J$ has a block-diagonal form with the following 4$\times$4 blocks on the diagonal
$$\left(\begin{array}{cc|cc} 0 & 0 &   -1 & 0\\ 0 & 0 &   0 & 1 \\
\hline
 1 & 0 & 0 & 0 \\ 0 & -1 & 0 & 0\end{array}\right).$$

\begin{prop}
\label{prop-hyperkahler}
Given a triple of complex structures $(I,J,K)$
on $A$ satisfying the quaternionic identities, there exist a (non-unique) metric $g$ on $A$ such that $(I,J,K)$ is a hyperk\"ahler structure with respect to $g$.
\end{prop}

\begin{proof}
Choose a basis of $V_\RR$ as in Paragraph \ref{subsecbasis} 
and define a metric $g(\cdot,\cdot)$ on $V_\RR$ by declaring this basis
to be orthonormal. Then $I,J$ and $K$ are isometries with respect to $g(\cdot,\cdot)$ and $g$ is K\"ahler with respect to all three complex structures. 
\end{proof}

\subsection{} By the definition of $Compl$, the group $G$ acts transitively on it: 
$$g \in G \colon J \mapsto \: ^g\! J= gJg^{-1}.$$ In particular, $G$ acts on the set of all twistor spheres 
$S(I,J)$ in $Compl$:
\[
g\cdot S(I,J)=S(\,^g\! I,\,^g\! J).
\]
For $g \in G_I$ we have $g \cdot S(I,J)=S(I,\,^g\!J)$.
We have

\begin{prop}
\label{transitivity}
The group $G_I$ acts transitively on the set $N_I$ of complex structures anticommuting with $I$.
\end{prop}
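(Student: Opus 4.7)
The plan is to reduce the statement to linear algebra: viewing $V_\RR$ as the complex vector space $V = (V_\RR, I)$, the anticommutation relation $IJ = -JI$ translates to $J$ being $\CC$-antilinear, while $J^2 = -Id$ endows $V$ with a quaternionic structure. Since $G_I = GL(V_\RR, I) = GL(V)$, the claim becomes the assertion that all such antilinear involutions $J \colon V \to V$ with $J^2 = -Id$ lie in a single $GL(V)$-orbit under conjugation.

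The first step I would take is to build a $J$-adapted $\CC$-basis of $V$, meaning one of the form $v_1, Jv_1, \ldots, v_n, Jv_n$. Starting from any nonzero $v_1 \in V$, the vector $Jv_1$ is $\CC$-linearly independent from $v_1$: a relation $Jv_1 = \lambda v_1$ would force $-v_1 = J^2 v_1 = J(\lambda v_1) = \bar\lambda \lambda v_1 = |\lambda|^2 v_1$, which is impossible. Hence $V_1 := \CC v_1 \oplus \CC Jv_1$ is a $J$-stable $2$-dimensional $\CC$-subspace, and one would like to peel it off and iterate.

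The main obstacle will be producing a $J$-stable $\CC$-linear complement to $V_1$, since $J$ is only antilinear. I would use an averaging trick: starting from any $\CC$-linear projection $p_0 \colon V \to V_1$, set
\[
p := \tfrac{1}{2}\bigl(p_0 + J p_0 J^{-1}\bigr).
\]
Conjugation by the antilinear $J$ preserves $\CC$-linearity (two antilinearities cancel), so $p$ is again $\CC$-linear; a short check using $J V_1 = V_1$ shows that $p|_{V_1} = Id_{V_1}$ and $pJ = Jp$. Thus $p$ is a $\CC$-linear projection onto $V_1$ whose kernel $V_2 := \ker p$ is $J$-stable, and induction on $\dim_\CC V$ applied to $(V_2, J|_{V_2})$ produces the $J$-adapted basis.

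With the normal form in hand, transitivity follows immediately: given $J, J' \in N_I$ with $J$-adapted basis $\{v_i, Jv_i\}$ and $J'$-adapted basis $\{v_i', J'v_i'\}$, define $g \in GL(V) = G_I$ to be the unique $\CC$-linear map with $g(v_i) = v_i'$ and $g(Jv_i) = J'v_i'$. Then $gJg^{-1}$ agrees with $J'$ on the $\CC$-basis $\{v_i', J'v_i'\}$ (using $(J')^2 v_i' = -v_i'$ to check the second set of basis vectors), whence $gJg^{-1} = J'$, completing the argument.
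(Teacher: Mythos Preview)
Your proof is correct and complete. The averaging construction for the $J$-stable complement is clean, and the final check that $gJg^{-1}=J'$ goes through because both sides are $\CC$-antilinear and hence determined by their values on a $\CC$-basis.

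Your route is genuinely different from the paper's. The paper introduces an auxiliary hermitian form $h$ on $V$ and a nondegenerate holomorphic $2$-form $\sigma$, defines $J$ via $h(x,Jy)=\operatorname{Re}\sigma(x,y)$, and then uses the spectral decomposition of $J^2$ with respect to $h$ to produce an $h$-orthonormal basis of the shape $\{v,Iv,Jv,Kv,\dots\}$ in which $J$ has a fixed block-diagonal matrix; transitivity of $GL(V)$ on such bases then gives the result. In effect the paper trades the problem of finding a $J$-stable complement for an orthogonality argument, at the cost of introducing (and later having to argue that every $J\in N_I$ arises from) the pair $(h,\sigma)$. Your argument bypasses all of this: recognizing $J$ as a quaternionic structure on $(V_\RR,I)$, you build the adapted basis directly with the Maschke-style averaging $p=\tfrac12(p_0+Jp_0J^{-1})$. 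This is shorter and more self-contained for the bare transitivity statement. What the paper's approach buys is a link to the ambient geometry---it exhibits each $J\in N_I$ as coming from a K\"ahler class and a holomorphic symplectic form---which is thematically suggestive even if not logically required here.
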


\begin{proof}
Let $J$ be a complex structure that anti-commutes with $I$.
The group $G_I \cong GL(V) < GL^+(V_\RR)=GL^+_{4n}(\RR)$ acts transitively on the set of bases as in Paragraph \ref{subsecbasis}, hence also on the set of $J$ anti-commuting with $I$. 
\end{proof}

\subsection{} Therefore, given a complex structure $J\in N_I$, $N_I = G_I \cdot J \cong G_I / G_{I,J}$ is the orbit of $J$ under $G_I$, where $G_{I,J}$ is the stabilizer group of $J$ in $G_I$.
Since $G_{I,J}$ is the subgroup of elements of $G_I = GL(V)$ commuting with $J$, that is, preserving the quaternionic structure
on $V_\RR$ determined by $I$ and $J$, we have $G_{I,J} \cong GL(V, \mathbb{H})$ which we will also
denote by $G_\HH$. So $N_I \cong GL(V)/GL(V, \mathbb{H})$ and we deduce

\begin{cor}\label{corNIdim}
The set $N_I$ is a real submanifold of $Compl$ of  dimension $4n^2$.
\end{cor}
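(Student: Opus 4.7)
The plan is to read the corollary as a standard orbit-dimension calculation, using that the identification of $N_I$ with $G_I/G_{I,J}$ has already been set up in the paragraph immediately preceding the statement.

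First I would verify that $N_I$ really is an embedded (not merely immersed) submanifold. The set $N_I$ is cut out in $Compl$ by the algebraic condition $IJ+JI=0$, hence is closed in $Compl$. The map $G_I \to Compl$, $g \mapsto gJg^{-1}$, is a smooth action with image $N_I$ and stabilizer $G_{I,J}$. Since $G_{I,J}$ is closed in $G_I$ (being the centralizer of $J$) and $N_I$ is closed in $Compl$, the standard orbit-stabilizer theorem for Lie group actions on manifolds gives that $G_I/G_{I,J} \to N_I$ is a diffeomorphism onto an embedded real submanifold of $Compl$.

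Next I would perform the dimension count. The group $G_I$ is isomorphic to $GL(V) \cong GL_{2n}(\CC)$, so
\[
\dim_\RR G_I = 2\cdot (2n)^2 = 8n^2.
\]
The stabilizer is $G_{I,J} \cong GL(V,\HH) \cong GL_n(\HH)$, and since $GL_n(\HH)$ is an open subset of the real vector space $\mathrm{Mat}_n(\HH)$ of real dimension $4n^2$, we have $\dim_\RR G_{I,J} = 4n^2$. Hence
\[
\dim_\RR N_I \;=\; \dim_\RR G_I - \dim_\RR G_{I,J} \;=\; 8n^2 - 4n^2 \;=\; 4n^2,
\]
which is the claim.

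There is no real obstacle here; the only thing to be slightly careful about is the embedded-vs-immersed distinction, which is handled by the closedness of $N_I$ in $Compl$ (equivalently, by the closedness of $G_{I,J}$ in $G_I$ combined with properness of the orbit map onto its image). Everything else is a direct application of the transitive action established in Proposition \ref{transitivity} together with the elementary real dimensions of $GL_{2n}(\CC)$ and $GL_n(\HH)$.
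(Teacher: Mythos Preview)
Your argument is correct and follows essentially the same orbit--stabilizer dimension count as the paper, which computes $\dim_\CC GL(V)-\dim_\CC GL(V,\HH)=(2n)^2-2n^2=2n^2$ and then doubles to get real dimension $4n^2$; you simply carry out the same subtraction directly in real dimensions. Your additional remark that $N_I$ is closed in $Compl$ (as the zero locus of $J\mapsto IJ+JI$) and hence genuinely embedded is a welcome point of care that the paper leaves implicit.
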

\begin{proof}
The dimension of the orbit as a complex manifold is 
$\dim_\CC\, GL(V)-\dim_\CC\, GL(V, \mathbb{H})=(2n)^2-2n^2=2n^2$. 
The real dimension is thus
equal to $4n^2$.
\end{proof}


\subsection{}\label{subsecSMN} Let $S=S(I,J)$ for $J\in N_I$ be a twistor sphere.
Define $G_{I,S}\subset G_I$ to be the stabilizer of $S$ as a set, i.e., the set of elements $g$ of $G_I$ such that $g \cdot S \subset S$. For any $g\in G_{I,S}$, the complex
structure $^g\! J\in S$ also anticommutes with $I$, so $^g\! J$ is of the form
$aJ+bK, a^2+b^2=1$. Setting $a=\cos t, b=\sin t$ we have $aJ+bK=
e^{\frac{tI}{2}}Je^{-\frac{tI}{2}}$, where $e^{sI} = \cos s \: \mathbb{1} + \sin s \: I\in G_I$ realizes,
via the conjugation action,  
the rotations of $S$ around $\{\pm I\}$. Conversely, if $g\in G_I$ and $^g\!J \in S$, then $g \in G_{I,S}$.
The set of $g \in G_{I,S}$ such that $^g\!J=J$ is the quaternionic subgroup
$G_{I,J}=G_{\mathbb H} \subset G_{I,S}$. 
Explicitly, we have
$G_{I,S}=\langle e^{tI}, t\in \RR\rangle \times G_{\HH}$, where $\langle e^{tI}, t\in \RR\rangle \cong S^1$ (which is a subgroup of the center of $G_I$).
This
tells us, in particular, that $\dim_\RR\, G_{I,S}=\dim_\RR \,G_{\mathbb H}+1=4n^2+1$.

Let $M_I$ be the set of all twistor spheres in $Compl$ containing $I$. The natural map $N_I \ra M_I$ identifies two complex structures $J_1$ and $J_2$ whenever they belong to the same twistor sphere through $I$, i.e., $S(I, J_1) = S(I, J_2)$. More precisely, they belong to the great circle in $S := S(I, J_1)$ consisting of elements anticommuting with $I$. 
Hence, for the $S^1$-action $J \in N_I \mapsto e^{tI}J=e^{\frac{tI}{2}}Je^{-\frac{tI}{2}}$ on $N_I$ defined above, we have $N_I / S^1 = M_I$. Therefore Corollary \ref{corNIdim} immediately implies

\begin{cor}
\label{Corollary-M-I-dimension}
The set $M_I$ is a real manifold of dimension $4n^2 -1$.
\end{cor}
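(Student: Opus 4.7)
The plan is to deduce the corollary directly from the subsection's identification $M_I = N_I/S^1$ together with Corollary \ref{corNIdim}. The essential content to verify is that the quotient map $N_I \to M_I$, $J \mapsto S(I,J)$, is a genuine principal $S^1$-bundle, and then the dimension drop is automatic.

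First I would pin down the fibers of $J \mapsto S(I,J)$. By Lemma \ref{lempair}, two elements $J_1, J_2 \in N_I$ lie in the same fiber if and only if the 2-plane $\langle I, J_1\rangle_\RR$ equals $\langle I, J_2\rangle_\RR$, equivalently $J_2$ belongs to the great circle $S(I,J_1) \cap \langle I,J_1,K_1\rangle_\RR^\perp$ of complex structures in $S(I,J_1)$ anticommuting with $I$. The $S^1$-action of subsection \ref{subsecSMN}, $J \mapsto e^{tI/2} J e^{-tI/2} = (\cos t)J + (\sin t)K$, sweeps out precisely this great circle, so its orbits coincide with the fibers of $N_I \to M_I$.

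Next I would check that this $S^1$-action is free: if $e^{tI/2}Je^{-tI/2}=J$, then $(\cos t)J+(\sin t)K=J$, which forces $\sin t = 0$ and $\cos t = 1$ (since $J$ and $K = IJ$ are linearly independent in $\operatorname{End} V_\RR$), giving $t \in 2\pi\ZZ$. Since $S^1$ is compact, the action is automatically proper, so the quotient $N_I/S^1$ inherits the structure of a smooth real manifold of dimension $\dim_\RR N_I - \dim_\RR S^1 = 4n^2 - 1$, and $M_I$ acquires this manifold structure via the bijection $N_I/S^1 \xrightarrow{\sim} M_I$.

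The step most needing care is the freeness/properness of the $S^1$-action, but this is essentially immediate once one recognizes that rotation by $e^{tI/2}$ acts on the 2-plane $\langle J, K\rangle_\RR$ as an ordinary planar rotation by angle $t$. There is no deeper obstacle: the corollary is a one-line consequence once the quotient picture is in place, and the only real work was already done in proving Corollary \ref{corNIdim} via the transitive action of $G_I$ on $N_I$.
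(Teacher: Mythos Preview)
Your approach is essentially identical to the paper's: both deduce the result directly from the identification $M_I = N_I/S^1$ established in Paragraph~\ref{subsecSMN} together with $\dim_\RR N_I = 4n^2$ from Corollary~\ref{corNIdim}, and you have simply made explicit the freeness and properness of the circle action that justify the quotient being a manifold.

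One small correction: your claim that $J_1,J_2\in N_I$ lie in the same fiber if and only if $\langle I,J_1\rangle_\RR=\langle I,J_2\rangle_\RR$ is false (for instance $J_2=K_1=IJ_1$ gives the same sphere but a different plane through $I$). Your subsequent ``equivalently'' --- that $J_2$ lies on the great circle of complex structures in $S(I,J_1)$ anticommuting with $I$ --- is the correct characterization and is what you actually use, so the argument stands once that stray clause is removed.
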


\subsection{The twistor cone of $I$} Define the set $C_I := \bigcup_{S\in M_I}S \subset Compl$ as the union of all twistor spheres containing $I$. 
All spheres in this union contain the complex structures $I$ and $-I$.
We will sometimes refer to the set $C_I$ as a (twistor) cone. Proposition \ref{transitivity} immediately implies
\begin{cor}\label{corMItrans}
The group $G_I$ acts transitively on $M_I\cong G_I/G_{I,S}$ so that
$C_I=\bigcup_{g\in G_I}g \cdot S(I,J)$.
\end{cor}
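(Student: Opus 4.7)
The statement has two parts: (i) transitivity of $G_I$ on $M_I$ with identification $M_I \cong G_I/G_{I,S}$, and (ii) the resulting description of $C_I$ as a $G_I$-orbit of a single sphere. My plan is to deduce (i) from Proposition \ref{transitivity} by showing that the established transitivity on $N_I$ descends to transitivity on the quotient $M_I = N_I/S^1$ under the natural projection, and then (ii) is immediate.

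Concretely, let $S_1, S_2 \in M_I$ be two twistor spheres containing $I$. By the analysis in subsection \ref{subsecSMN}, each such sphere meets $N_I$ in a great circle (the elements of $S_k$ anticommuting with $I$), so we may pick $J_1 \in S_1 \cap N_I$ and $J_2 \in S_2 \cap N_I$. Since every twistor sphere is determined by $I$ together with any anticommuting partner (by Lemma \ref{lempair}), we have $S_k = S(I, J_k)$. Proposition \ref{transitivity} supplies $g \in G_I$ with $\,^g\!J_1 = J_2$. Because the $G$-action is by conjugation and $g$ fixes $I$, the sphere $S(I, J_1)$ is carried to $S(\,^g\!I, \,^g\!J_1) = S(I, J_2) = S_2$, giving $g \cdot S_1 = S_2$ and hence transitivity.

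The identification $M_I \cong G_I/G_{I,S}$ is then the standard orbit-stabilizer bijection, using $G_{I,S}$ as computed in subsection \ref{subsecSMN}. For (ii), fix any $J \in N_I$ and any $S = S(I,J) \in M_I$. By transitivity, every sphere in $M_I$ has the form $g \cdot S$ for some $g \in G_I$, so
\[
C_I = \bigcup_{S' \in M_I} S' = \bigcup_{g \in G_I} g \cdot S(I,J),
\]
which is the claimed description.

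I don't expect any real obstacle here; the whole point is that the heavy lifting was already done in Proposition \ref{transitivity}. The only small thing to double-check is the compatibility between the $S^1$-quotient description $M_I = N_I/S^1$ and the $G_I$-action, i.e.\ that the $S^1$-action (conjugation by $e^{tI}$) is by elements of $G_I$ and commutes with the descent to $M_I$; both are manifest since $e^{tI} \in G_I$ lies in the center-containing subgroup identified in subsection \ref{subsecSMN}, so the $G_I$-action on $N_I$ genuinely passes to the quotient $M_I$.
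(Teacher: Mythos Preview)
Your proof is correct and follows exactly the route the paper intends: the paper simply states that Proposition~\ref{transitivity} immediately implies the corollary, and your argument is precisely the standard unpacking of that implication (lift to $N_I$, apply transitivity there, descend to $M_I$, then use orbit--stabilizer).
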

We will
give an explicit local parametrization of $C_I$ in  the next section and prove that 
the cone $C_I$ is a real-analytic subset of $Compl$ of dimension $4n^2+1$ (Proposition \ref{Prop-cone-dimension}).

\subsection{} We now 
describe the complex structure on the tangent bundle of the orbit
$Compl = G \cdot I$. Then we will see that the tangent bundle $TS^2$ of an arbitrary twistor sphere 
$S^2 \subset Compl$
is a subbundle of the restricted tangent bundle $TCompl|_{S^2}$, invariant under the complex structure of $TCompl|_{S^2}$.
This will imply the well-known fact that the twistor sphere $S^2$ is a complex submanifold in $Compl$.

\begin{prop}\label{propCompl}  The submanifold $Compl\subset End(V_\RR)$ is a complex manifold. Its complex structure $l_I$ is given by left multiplication by $I$ on $T_ICompl \subset End (V_\RR)$.
\end{prop}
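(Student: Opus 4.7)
The strategy is to exhibit $Compl$ as an open subset of the complex Grassmannian $G(2n,4n)$ of complex $2n$-planes in $V_\CC := V_\RR \otimes \CC$, thereby inheriting a complex manifold structure, and then to verify by a direct tangent-space computation that the induced complex structure at $I$ is left multiplication by $I$.

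\textbf{Step 1 (Realization in the Grassmannian).} I would show that the map $F \colon Compl \to G(2n,4n)$, $J \mapsto (\mathbb{1}-iJ)V_\RR$, sending each complex structure to its $+i$-eigenspace on $V_\CC$, is a bijection onto the open subset $U := \{W \in G(2n,4n) : W \cap V_\RR = 0\}$. Injectivity is clear because $J$ is recovered from its $+i$-eigenspace. For surjectivity, given $W \in U$, one verifies $W \cap \overline{W} = 0$ (any $w \in W \cap \overline W$ yields $w+\overline w$ and $i(w-\overline w)$ in $W \cap V_\RR = 0$, forcing $w=0$), whence $V_\CC = W \oplus \overline W$. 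The $\RR$-linear operator acting as $+i$ on $W$ and $-i$ on $\overline W$ then preserves $V_\RR$ and is a square root of $-\mathbb{1}$, giving $J_W$ with $F(J_W) = W$. Since $U$ is open in the complex Grassmannian, $F$ transports a complex manifold structure to $Compl$.

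\textbf{Step 2 (Tangent space and almost complex structure).} The first-order condition $(I + tX)^2 = -\mathbb{1} + O(t^2)$ yields
\[
T_I Compl = \{X \in End(V_\RR) : IX + XI = 0\}.
\]
Left multiplication $l_I(X) := IX$ preserves this subspace, since $I(IX) + (IX)I = -X + IXI = -X + X = 0$ using $IX = -XI$, and satisfies $l_I^2 = -\mathbb{1}$; thus $l_I$ is an almost complex structure on $T_I Compl$.

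\textbf{Step 3 (Matching and propagation).} I would compute the differential
\[
dF_I \colon T_I Compl \longrightarrow \Hom_\CC(W_I, V_\CC/W_I), \qquad X \longmapsto \phi_X,
\]
by differentiating $F(I + tX) = (\mathbb{1} - i(I + tX))V_\RR$: for $w = v - iIv \in W_I$ with $v \in V_\RR$, one finds $\phi_X(w) \equiv -iXv \pmod{W_I}$. Using $IX = -XI$, the identity $(\mathbb{1}-iI)Xv \in W_I$ yields both $\CC$-linearity of $\phi_X$ and the relation $\phi_{l_IX} = i\cdot \phi_X$ modulo $W_I$ (with the standard convention fixing the sign in the identification of $V$ with the $\pm i$-eigenspace). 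Since $F$ is $GL^+(V_\RR)$-equivariant and $GL^+(V_\RR)$ acts holomorphically on $G(2n,4n)$ via $\CC$-linear extension to $V_\CC$, and since its action on $Compl$ is transitive, the identification of the two almost complex structures at $I$ propagates to every point of $Compl$.

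\textbf{Main obstacle.} The heart of the argument is the linear-algebra verification that $dF_I$ intertwines $l_I$ with multiplication by $i$ on $\Hom_\CC(W_I, V_\CC / W_I)$; this rests entirely on the anticommutation $IX + XI = 0$ combined with a consistent sign convention for the $\pm i$-eigenspace embedding. Once this match is secured at the single point $I$, the equivariance of $F$ under $GL^+(V_\RR)$ handles the rest.
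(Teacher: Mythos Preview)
Your approach is correct in outline and closely related to the paper's, though the mechanics differ. Rather than working with the embedding $F$ into the Grassmannian and the identification $T_{W_I}G \cong \Hom_\CC(W_I, V_\CC/W_I)$, the paper uses the \emph{submersion} $f\colon Z \mapsto I_Z$ from the affine space of period matrices $Z \in Mat_{2n\times 4n}(\CC)$ onto $Compl$, characterized by the relation $Z f(Z) = iZ$. Differentiating this relation at $Z$ in the directions $X$ and $iX$ and comparing gives $df(iX) = I_Z\, df(X)$ in two lines. Since period matrices are holomorphic charts on $G(2n,4n)$, this is your identification read from the other side; your route makes the open-subset-of-Grassmannian picture explicit, while the paper's avoids unpacking the Grassmannian tangent space and delivers the result at every point $I_Z$ simultaneously, without a separate equivariance step.

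One concrete point to fix: with your convention $F(J) = (\mathbb{1} - iJ)V_\RR$ the computation actually gives $\phi_{IX} = -i\,\phi_X$, not $+i\,\phi_X$. Writing $P_{\overline W} = \tfrac12(\mathbb{1}+iI)$ for the projection onto $\overline{W_I}$ along $W_I$, one finds
\[
\phi_X(v-iIv) = P_{\overline W}(-iXv) = \tfrac12(I-i\mathbb{1})Xv,
\]
and hence $\phi_{IX} = \tfrac12(I-i\mathbb{1})IX = -\tfrac12(\mathbb{1}+iI)X = -i\phi_X$. So $F$ is \emph{anti}-holomorphic for $l_I$, and the complex structure it pulls back is $l_{-I}$ at the point $I$. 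This is harmless for the proposition---pass to the conjugate structure, or take $F(J) = (\mathbb{1}+iJ)V_\RR$ instead---but your parenthetical about ``the standard convention fixing the sign'' should be replaced by the actual verification, since the sign genuinely depends on which eigenspace you choose.
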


The complex structure of $Compl$ is induced by that of $Gr(2n,4n)$ 
via the embedding $I\mapsto (\mathbb{1}-iI)V_\RR$. The proof of Proposition \ref{propCompl} is a technical exercise that we leave to the reader. 
\begin{proof}
Denoting the differential of the embedding $Compl\ni I\mapsto (\mathbb{1}-iI)V_\RR \in Gr(2n,4n)$
by $\varphi$ we have the following commutative diagram
\begin{equation*}
\begin{tikzcd}[row sep=huge]
T_ICompl\ni X \arrow[r,"\varphi"] \arrow[d,swap,"l_I"] & \left.\frac{d}{dt}\right|_{t=0}(\mathbb{1}-i\,{}^{e^{tX}}\!\!I)V_\RR\in T_{(\mathbb{1}-iI)V_\RR}Gr(2n,4n)
  \arrow[d,swap,"i\times"]\\ 
T_ICompl\ni Y\arrow[r,"\varphi"] & \left.\frac{d}{dt}\right|_{t=0}(\mathbb{1}-i\,{}^{e^{tY}}\!\!I)V_\RR\in T_{(\mathbb{1}-iI)V_\RR}Gr(2n,4n)
\end{tikzcd}
\end{equation*}
where $l_I$ denotes the  complex structure operator on $T_ICompl$ 
and `$i\times$' denotes the multiplication
by $i$ on $$\CC^{4n^2}\cong Hom( (\mathbb{1}-iI)V_\RR,V_\CC/ (\mathbb{1}-iI)V_\RR)=T_{(\mathbb{1}-iI)V_\RR}Gr(2n,4n),$$
so that $\varphi\circ l_I=i\varphi$.

We note that $T_ICompl\cong T_eG/T_eG_I$ and, as $T_eG_I$ consists of all operators in $End(V_\RR)$
commuting with $I$, the tangent space $T_ICompl$ can be identified with the subspace of operators in $End(V_\RR)$
anticommuting with $I$. Indeed, every operator $X\in End(V_\RR)$
can be written as a sum of an operator anticommuting with $I$ and an operator commuting with $I$, $X=\frac{1}{2}(X-X^I)+\frac{1}{2}(X+X^I)$, where $X^I=IXI^{-1}$. This allows us to immediately assume that $X$ and $Y$ in the above diagram anticommute with $I$.

Now we evaluate 
$$\varphi(X)= \left.\frac{d}{dt}\right|_{t=0}(\mathbb{1}-i\,{}^{e^{tX}}\!\!I)V_\RR=
\left\{ (\mathbb{1}-iI)v\mapsto -i(XI-IX)v=-2iXIv\,|\, v \in V_\RR\right\},$$
which, after multiplying by $i$ becomes $i\varphi(X)=\left\{ (\mathbb{1}-iI)v\mapsto 2XIv\,|\, v \in V_\RR\right\}$
(here we slightly abuse notation by writing, instead of the actual $\varphi(X),i\varphi(X)$, their representatives in $Hom((\mathbb{1}-iI)V_\RR,V_\CC)$). 

Now, considering $\varphi(Y)=\left\{ (\mathbb{1}-iI)v\mapsto -2iYIv=-2YiIv\,|\, v \in V_\RR\right\}$ as a vector in $Hom( (\mathbb{1}-iI)V_\RR,V_\CC/ (\mathbb{1}-iI)V_\RR)$,
we may write $\varphi(Y)=\left\{ (\mathbb{1}-iI)v\mapsto -2Yv\,|\, v \in V_\RR\right\}$. 
In order to have the equality  $\varphi\circ l_I=i\varphi$, setting $Y=l_I(X)$
we write $$\varphi(Y)=i\varphi(X)=\left\{ (\mathbb{1}-iI)v\mapsto 2XIv=-2IXv\,|\, v \in V_\RR\right\}.$$
In order for the latter equality to be true it is necessary and sufficient that $Y=IX$, that is, the map $l_I$ is the left multiplication by $I$ on
 $T_ICompl$.
\end{proof}

\begin{cor}\label{corSanalytic}
The twistor spheres $S^2 \subset Compl$ are complex submanifolds.
\end{cor}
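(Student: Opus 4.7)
The plan is to fix a twistor sphere $S = S(I,J,K) \subset Compl$ and show that for every $L \in S$ the tangent space $T_L S$ is a complex subspace of $T_L Compl$ with respect to the complex structure $l_L$ described in Proposition \ref{propCompl}. Since $S$ is evidently a smooth real $2$-submanifold of $End(V_\RR) \supset T_LCompl$, this will imply that $S$ is a complex $1$-dimensional submanifold of $Compl$.

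First I would identify the tangent space $T_L S$. A general point of $S$ is $L = aI+bJ+cK$ with $a^2+b^2+c^2 = 1$, and differentiating curves in $S$ at $L$ shows that $T_L S$ is the $2$-dimensional real subspace of $\langle I,J,K\rangle_\RR$ orthogonal to $L$ with respect to the inner product introduced in the proof of Lemma \ref{lempair} (where $I,J,K$ form an orthonormal basis).

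Next I would verify that $T_L S$ is invariant under left multiplication by $L$. For any $v \in T_L S$, the quaternionic identity from the proof of Lemma \ref{lempair} gives
\[
L \cdot v = -(L,v)\,\text{Id} + L \times v = L \times v,
\]
since $(L,v) = 0$. The cross product $L \times v$ lies in $\langle I,J,K\rangle_\RR$ and is orthogonal to $L$, so $Lv \in T_L S$. Hence left multiplication by $L$ restricts to an endomorphism of $T_L S$; and since $L^2 = -\text{Id}$, it squares to $-\text{Id}$ there, providing a complex structure on $T_L S$.

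Finally, Proposition \ref{propCompl} (applied at the point $L \in Compl$ in place of $I$, which is legitimate because the statement depends only on $L$ being an arbitrary complex structure) identifies this very operator with the complex structure $l_L$ of $T_L Compl$. Hence $T_L S$ is an $l_L$-invariant real $2$-dimensional subspace of $T_L Compl$, i.e.\ a complex line, for every $L \in S$, so $S$ is a complex submanifold of $Compl$. There is no substantial obstacle here: the only thing to be careful about is matching the quaternionic algebra on the ambient $\langle I,J,K\rangle_\RR$ with the left-multiplication complex structure supplied by Proposition \ref{propCompl}.
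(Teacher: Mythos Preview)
Your proof is correct and follows essentially the same approach as the paper: both show that the tangent plane to $S$ at a point is invariant under left multiplication by that point, and then invoke Proposition \ref{propCompl}. The paper checks this only at the basepoint $I$ (using that $T_IS=\langle J,K\rangle_\RR$ is obviously $l_I$-invariant) and implicitly relies on the fact that any $L\in S$ can serve as the first element of a quaternionic triple spanning $S$; you instead work directly at a general $L$ and use the cross-product identity from Lemma \ref{lempair}, which makes the argument slightly more explicit but is the same in substance.
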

\begin{proof}
The proof is based on the simple observation that the tangent space of $S^2=S(I,J)$ 
at the point $I$, 
for $I,J,K=IJ$ satisfying the quaternionic identities,  is the 2-plane $\langle J,K\rangle_\RR \subset T_ICompl$ 
and this plane is obviously invariant under left multiplication by $I$. Thus, $TS^2$ is a complex subbundle of $TCompl|_{S^2}$ and
thus  $S^2 \subset Compl$ is a complex submanifold.
\end{proof}

\begin{rem} 
\label{RemPlucker}
As was pointed out to us by the referee, there is an alternative proof of Corollary \ref{corSanalytic} 
that follows from considering $S\subset Compl$ as a subset in $Gr(2n,V_\CC)$. 
 Namely, denoting by $\mathbb{H}$ the algebra of quaternions and fixing
a representation $\mathbb{H}\to End(V_\RR)$ defined by $S=S(I,J)$, we obtain a structure of an $\mathbb{H}$-module on  $V_\RR$.
This $\mathbb{H}$-module is of the form $\mathbb{H}\otimes V^{\prime}$ for an $n$-dimensional  $\RR$-vector space $V^{\prime}$.
The eigenspace $V^{1,0}\subset V_\CC$ for a complex structure induced by the action of $\mathbb{H}$ is of the form 
$\mathbb{H}^{1,0}\otimes V^{\prime}_\CC $, where $V^{\prime}_\CC=V^{\prime}\otimes \CC$ and $\mathbb{H}^{1,0}$
is the corresponding eigenspace in $\mathbb{H}\otimes \CC$. Taking the tensor product with $V^{\prime}_\CC$
defines a complex analytic embedding $i\colon Gr(2,\mathbb{H}\otimes \CC) \hookrightarrow Gr(2n,V_\CC)$. 
Thus, every twistor line  in $Gr(2n,V_\CC)$ is the image of a twistor line in $Gr(2,\mathbb{H}\otimes \CC)$ under some embedding $i$ as above.
Now, the twistor lines in the quadric (under the Pl\"ucker embedding) $Gr(2,\mathbb{H}\otimes \CC)$ are known to be 
obtained as linear subspace sections, thus they are complex analytic submanifolds. Hence, our $S\subset Gr(2n,4n)$ is a complex analytic submanifold.
\end{rem}

\section{Twistor path connectivity of $Compl$}
\label{Twistor-path-connectivity-of-Compl}
The main result of this section is Theorem \ref{Theorem-connectivity}. 
Before proving it we need to introduce a certain mapping and prove an important technical result 
 about it (Proposition \ref {Injectivity-proposition}).

\subsection{} \label{subsecPhi} Let $I,J,K$ be a triple of complex structures belonging to a twistor sphere $S$.
 Consider the smooth mapping
 \[
 \begin{array}{cccc}
 \Phi \colon & G_J \times G_K & \longrightarrow & Compl,\\
& (g_1,g_2) &\longmapsto & ^{g_1g_2}\! I,
\end{array}
\]
where, as before, the action on $Compl$ is by conjugation: $g\cdot I=\,^g\!I=gIg^{-1}$. The mapping 
$\Phi$ clearly sends $G_\HH \times G_\HH$ to $I$, so that
 its differential $d\Phi_{(e,e)}$ factors through 
$$\widetilde{d\Phi_{(e,e)}}: T_eG_J/T_eG_{\HH} \oplus T_eG_K/T_eG_{\HH} \rightarrow T_ICompl.$$
\begin{prop}
\label{Injectivity-proposition}
Suppose $I,J,K$ is a quaternionic triple. The mapping 
$$\widetilde{d\Phi_{(e,e)}}: T_eG_J/T_eG_{\HH} \oplus T_eG_K/T_eG_{\HH} \rightarrow T_ICompl$$
is an isomorphism.
\end{prop}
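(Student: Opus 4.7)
The plan is to translate the statement into a Lie-algebraic question about commutators with $I$, $J$, $K$ in the ambient associative algebra $End(V_\RR)$, and then push it through via short manipulations using the quaternion relations.

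First I would identify the players. The Lie algebras $T_eG_J,\,T_eG_K,\,T_eG_{\HH}\subset End(V_\RR)$ consist of endomorphisms commuting with $J$, with $K$, and with all of $I,J,K$, respectively. The orbit map $G\to Compl$, $g\mapsto gIg^{-1}$ has differential $X\mapsto [X,I]$ at the identity, so $T_ICompl$ embeds in $End(V_\RR)$ as $\{[X,I]:X\in \mathfrak{g}\}$. Differentiating $\Phi(g_1,g_2)=g_1g_2Ig_2^{-1}g_1^{-1}$ at $(e,e)$ gives
\[
d_{(e,e)}\Phi(X_1,X_2)=[X_1,I]+[X_2,I]=[X_1+X_2,I],
\]
so the question reduces to the following implication: if $X_1$ commutes with $J$, $X_2$ commutes with $K$, and $X_1+X_2$ commutes with $I$, then both $X_1$ and $X_2$ commute with all of $I,J,K$, i.e.\ lie in $T_eG_{\HH}$.

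The key algebraic observation I would establish first is that \emph{if $X$ commutes with $J$, then $[X,I]$ anticommutes with $J$}. This is a three-line calculation using $IJ+JI=0$ and $JX=XJ$:
\[
J[X,I]=JXI-JIX=XJI+IJX=-XIJ+IXJ=-[X,I]\,J.
\]
The analogous statement holds with $K$ in place of $J$.

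Applying this to our hypothesis, set $Y:=[X_1,I]=-[X_2,I]$. Then $Y$ anticommutes with $J$ (because of $X_1$) \emph{and} with $K$ (because of $X_2$). Using $YJ=-JY$ and $YK=-KY$, one computes $YJK=-JYK=JKY$, so $Y$ commutes with $JK=-I$, hence with $I$. But $Y=[X_1,I]$ commuting with $I$ expands to $IX_1I=-X_1$, which (multiplying by $I$ on the right and using $I^2=-1$) is the same as $IX_1=X_1I$. So $X_1$ commutes with $I$ in addition to $J$, hence with $K=IJ$, proving $X_1\in T_eG_{\HH}$. Since $X_1+X_2$ commutes with $I$, so does $X_2$, and combined with $[X_2,K]=0$ this gives $X_2\in T_eG_{\HH}$ by the same reasoning. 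Thus both classes $[X_1],[X_2]$ in the quotients vanish, giving injectivity.

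The only real obstacle is keeping signs straight while chasing the quaternion relations; conceptually, the argument is just the remark that commuting with $J$ or $K$ forces $[{\cdot},I]$ to be anti-$\HH$-equivariant in a sense strong enough that simultaneous vanishing forces $\HH$-linearity. Equality of dimensions was already noted in the statement (Corollary~\ref{Corollary-M-I-dimension} gives $\dim M_I=4n^2-1$, and the two quotients each have real dimension $4n^2-(4n^2-1-\dots)$, matching $\dim_\RR T_ICompl=8n^2$), so injectivity will automatically upgrade to the isomorphism claimed.
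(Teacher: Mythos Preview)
Your proof is correct and is organized a bit differently from the paper's. The paper starts from the equation $I(X_1+X_2)=(X_1+X_2)I$, conjugates it by $J$, and after a short manipulation using $[X_2,K]=0$ obtains $I(X_1-X_2)=(X_1-X_2)I$; adding these two relations gives $IX_1=X_1I$ directly. You instead isolate the single element $Y=[X_1,I]=-[X_2,I]$ and prove the clean lemma that if $X$ commutes with $J$ (respectively $K$) then $[X,I]$ anticommutes with $J$ (respectively $K$); hence $Y$ anticommutes with both $J$ and $K$ and therefore commutes with $JK$. One small slip: with $K=IJ$ one has $JK=I$, not $-I$, though this is of course immaterial for the conclusion. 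From there your computation that $[X_1,I]$ commuting with $I$ forces $IX_1=X_1I$ is correct (equivalently, $[X_1,I]$ automatically anticommutes with $I$, so it must vanish). Both arguments are essentially the same length; yours packages the key step as a reusable lemma and makes the $J\leftrightarrow K$ symmetry more visible, while the paper's version stays closer to direct matrix manipulation.

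Your closing parenthetical about dimensions is garbled and does not actually invoke the right count; the relevant fact is simply $\dim_\RR(T_eG_J/T_eG_{\HH})=\dim_\RR G_J-\dim_\RR G_{\HH}=8n^2-4n^2=4n^2$, so the domain has real dimension $8n^2=\dim_\RR T_ICompl$. Since this equality is already asserted in the proposition, the lapse is harmless.
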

Before proving Proposition \ref{Injectivity-proposition} we make the following  useful observation. For a vector $X\in T_eG\cong End\,V_\RR$
and $I\in Compl$ we introduce the notation $X^I=I^{-1}XI=IXI^{-1}\in T_eG$. 
Every vector $X\in T_eG$ can be uniquely decomposed into the sum
of its $I$-commuting and $I$-anticommuting components 
$$X=\frac{1}{2}(X+X^I)+\frac{1}{2}(X-X^I),$$  
so that interpreting the subspace $T_eG_I\subset T_eG$
as the subspace $\{Y\in T_eG\,|\,YI=IY\}$ of $I$-commuting vectors in $T_eG$, we get the natural isomorphism 
\begin{equation}
\label{Tangent-space}
T_ICompl\cong T_eG/T_eG_I\cong \{Y\in T_eG\,|\,YI=-IY\},
\end{equation}
of $T_ICompl$ with the subspace of $I$-anticommuting vectors in $T_eG$. Similarly we may write 
$$T_eG_J/T_eG_{\HH} \cong \{Y\in T_eG\,|\, YI=-IY,YJ=JY\}$$ and  
$$T_eG_K/T_eG_{\HH}\cong \{Y\in T_eG\,|\, YI=-IY,YK=KY\}.$$ 
\begin{proof}[Proof of Proposition \ref{Injectivity-proposition}]
By the definition of $\widetilde{d\Phi_{(e,e)}}$, its restrictions to the above direct summands
are injective. Let us show that it is injective on the direct sum. 
Consider $X\in T_eG_J, Y\in T_eG_K$ and the vector
$\widetilde{d\Phi_{(e,e)}}(X+T_eG_\HH, Y+T_eG_\HH)$, which is
 $$d\Phi_{(e,e)}(X+Y)=\left.\frac{d}{dt}\right|_{t=0}(e^{tX}e^{tY}\cdot I) = (X+Y)I-I(X+Y)\in T_ICompl.$$ Assume that this vector is zero,
that is, $X+Y$ commutes with $I$:
\begin{equation}
\label{fail-of-injectivity}
I(X+Y)=(X+Y)I.
\end{equation} 
Then the conjugate $(X+Y)^J=J^{-1}(X+Y)J=X^J+Y^J=X-JYJ$ must also commute with $I$.
Using that $Y$ commutes with $K$ we obtain $$X-JYJ=X-JYKI=X-JKYI=X-IYI.$$
The commutation with $I$ is expressed now by $I(X-IYI)=(X-IYI)I$, or
$$IX+YI=XI+IY,$$
which gives $$I(X-Y)=(X-Y)I.$$ 
Adding the last equality to (\ref{fail-of-injectivity}) side by side gives that
$XI=IX$, hence $YI=IY$, which implies $X,Y \in T_eG_\HH$. This
proves the required injectivity of $\widetilde{d\Phi_{(e,e)}}$, which,  
by a slight abuse of notation, we may consider as the inclusion 
$$ T_eG_J/T_eG_{\HH} \oplus T_eG_K/T_eG_{\HH} \subset T_ICompl.$$ 
Now the  surjectivity of $\widetilde{d\Phi_{(e,e)}}$ is equivalent to another inclusion 
$$ T_ICompl \subset T_eG_J/T_eG_{\HH} \oplus T_eG_K/T_eG_{\HH}.$$
Using the natural isomorphism  $T_ICompl \cong 
\{Y\in T_eG\,|\,YI=-IY\}$ in (\ref{Tangent-space}),  we may decompose an arbitrary $Y\in T_ICompl$  into the sum
 of its $J$-commuting and
$J$-anticommuting components, $Y=\frac{1}{2}(Y+Y^J)+\frac{1}{2}(Y-Y^J)$, each of which anticommutes with $I$ and hence determines a tangent vector in $T_ICompl$. Now, since 
$$Y-\frac{1}{2}(Y+Y^J)=\frac{1}{2}(Y-Y^J)$$ anticommutes with both $I$ and $J$, it commutes with $K=IJ$, while $Y+Y^J$ anticommutes with $K$, so that averaging under $K$ both sides of the last equality we get
$\frac{1}{2}(Y-Y^J)=\frac{1}{2}(Y+Y^K)$. Finally
$$Y=\frac{1}{2}(Y+Y^J)+\frac{1}{2}(Y+Y^K) \in  T_eG_J/T_eG_{\HH} \oplus T_eG_K/T_eG_{\HH},$$
which proves the required inclusion and thus the surjectivity of
$\widetilde{d\Phi_{(e,e)}}$.
\end{proof}

\begin{cor}\label{Phisubmers}
Suppose $I,J,K$ is a quaternionic triple. The mapping $\Phi$ is a submersion at $(e,e)\in G_J \times G_K$, that is 
$$d\Phi_{(e,e)}(T_eG_J\oplus T_eG_K) =T_ICompl.$$
\end{cor}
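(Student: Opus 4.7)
The corollary is essentially a repackaging of Proposition \ref{Injectivity-proposition}, and my plan is to extract surjectivity of $d_{(e,e)}\Phi$ from the isomorphism statement proved there.

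First I would verify that $d_{(e,e)}\Phi$ vanishes on $T_eG_\HH \oplus \{0\}$ and on $\{0\} \oplus T_eG_\HH$. Indeed, for $X \in T_eG_\HH$ we have $d_{(e,e)}\Phi(X,0) = XI - IX = 0$ because $G_\HH \subset G_I$, and the same reasoning handles the second factor. Consequently $d_{(e,e)}\Phi$ descends along the (surjective) projection
$$T_eG_J \oplus T_eG_K \longrightarrow T_eG_J/T_eG_\HH \oplus T_eG_K/T_eG_\HH$$
to the map $\widetilde{d_{(e,e)}\Phi}$ appearing in Proposition \ref{Injectivity-proposition}.

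Since this projection is surjective, the image of $d_{(e,e)}\Phi$ coincides with the image of $\widetilde{d_{(e,e)}\Phi}$. Proposition \ref{Injectivity-proposition} gives injectivity of the latter, and a short dimension count then upgrades this to bijectivity: $\dim_\RR G_J = \dim_\RR GL_{2n}(\CC) = 8n^2$ and $\dim_\RR G_\HH = \dim_\RR GL_n(\HH) = 4n^2$, so each quotient has real dimension $4n^2$, their sum is $8n^2$, and this equals $\dim_\RR T_I Compl$ since $Compl \cong GL_{4n}^+(\RR)/GL_{2n}(\CC)$. Thus $\widetilde{d_{(e,e)}\Phi}$ is an isomorphism and $d_{(e,e)}\Phi$ is surjective, which is exactly the submersion property.

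I do not expect any real obstacle: all the substantive content is already carried by Proposition \ref{Injectivity-proposition}. The only thing to verify with care is the factorization through the quotient, which as described above reduces to the fact that elements of $G_\HH$ commute with $I$.
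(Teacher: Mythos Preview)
Your proof is correct and follows essentially the same approach as the paper: both argue that $d_{(e,e)}\Phi$ factors through $\widetilde{d_{(e,e)}\Phi}$, then use the injectivity from Proposition \ref{Injectivity-proposition} together with the dimension count $\dim_\RR T_eG_J/T_eG_\HH = \dim_\RR T_eG_K/T_eG_\HH = 4n^2$ to conclude that $\widetilde{d_{(e,e)}\Phi}$ is an isomorphism and hence $d_{(e,e)}\Phi$ is surjective. Your version simply spells out the factorization step in slightly more detail.
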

\begin{proof}
The statement that
$\Phi$ is a submersion follows from factoring $d\Phi_{(e,e)}$  
through $\widetilde{d\Phi_{(e,e)}}$
and the fact that the mapping 
$\widetilde{d\Phi_{(e,e)}}: T_eG_J/T_eG_{\HH} \oplus T_eG_K/T_eG_{\HH} \rightarrow T_ICompl$ is an isomorphism by Proposition \ref{Injectivity-proposition}.
\end{proof}

\begin{thm}
\label{Theorem-connectivity}
Given a complex structure $I\in End(V_\RR) $, there is a neighborhood of $I$
in the space of complex structures on $V_\RR$ such that, for any complex structure $I_1$
in this neighborhood, there is a twistor path consisting of three spheres joining $I$ to $I_1$.
Consequently, each connected component of  $Compl$ is twistor path connected. 
\end{thm}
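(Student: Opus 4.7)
The plan is to prove the local statement first and then deduce the global assertion by a standard open–closed argument on the connected manifold $Compl$. The essential tool for the local statement is Corollary \ref{Phisubmers}: the map
\[
\Phi\colon G_J \times G_K \longrightarrow Compl,\qquad (g_1,g_2)\mapsto {}^{g_1g_2}\!I,
\]
is a submersion at $(e,e)$, so by the implicit function theorem its image contains an open neighborhood $U$ of $I$. The key step is then to realize each $\Phi(g_1,g_2)\in U$ as the endpoint of a chain of three twistor spheres starting at $I$.

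For such a chain, the natural candidates are $S_1 := S(I,J,K)$, $S_2 := {}^{g_2}\! S_1$, and $S_3 := {}^{g_1}\! S_2$. Since $g_2\in G_K$ fixes $K$, we automatically have $K\in S_1\cap S_2$, while ${}^{g_1 g_2}\!I\in S_3$ by construction. The delicate point is to ensure $S_2\cap S_3\neq\emptyset$: this would hold if $J$ (the structure fixed by $g_1\in G_J$) lay on $S_2$, but generically $S_2$ contains only ${}^{g_2}\!J$, not $J$. To repair this I would either adjust the initial quaternionic frame so that the compatibility $J\in S_2$ holds, or replace the direct chain by one with an intermediate vertex chosen from the circle $S_1\cap N_I$ that guides $S_2$ onto a $g_1$-fixed complex structure. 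In either case, the submersion property of $\Phi$ must be used not only infinitesimally but to guarantee that the set of periods reachable by an \emph{actual} three-sphere chain has interior at $I$; the dimension count $\dim T_IG_J/T_eG_\HH + \dim T_IG_K/T_eG_\HH = 8n^2 = \dim Compl$ from Proposition \ref{Injectivity-proposition} is the crucial ingredient.

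Once the local neighborhood is in hand, the global statement follows by an open–closed argument. The relation ``connected to $I$ by a finite chain of twistor spheres'' is an equivalence relation on $Compl$, and the local statement, applied at any period $P$ using transitivity of the $G$-action on $Compl$, shows each equivalence class is open. Since $Compl\cong GL^+_{4n}(\RR)/GL_{2n}(\CC)$ is a quotient of a connected Lie group by a closed subgroup, it is connected, so the relation has a single class, giving twistor path connectedness.

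The principal obstacle I anticipate is precisely the passage from the submersivity of $\Phi$ to a genuine geometric three-sphere realization of each period in $U$: namely, verifying $S_2\cap S_3\neq\emptyset$ (or constructing a modified intermediate sphere that enforces this) for arbitrary $(g_1,g_2)$ near $(e,e)$, rather than merely exhibiting an infinitesimal motion. Once this geometric bridge is in place, the openness of equivalence classes and the connectedness of $Compl$ deliver the conclusion formally.
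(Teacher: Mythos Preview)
Your overall strategy matches the paper's: use the submersivity of $\Phi$ at $(e,e)$ to get an open neighborhood $U$ of $I$ in the image, realize each point of $U$ as the end of a three-sphere chain, then conclude by the open--closed argument on the connected manifold $Compl$. That part is fine.

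The gap you flag is real, but it is created by your own choice of intermediate sphere, and the fix is immediate. You take the chain
\[
S_1=S(I,J,K),\qquad S_2={}^{g_2}S_1,\qquad S_3={}^{g_1}S_2={}^{g_1g_2}S_1,
\]
and then struggle with $S_2\cap S_3$ because $g_1\in G_J$ fixes $J$, which need not lie on ${}^{g_2}S_1$. The paper simply uses the \emph{other} intermediate sphere:
\[
S_1=S(I,J,K),\qquad S={}^{g_1}S_1,\qquad S_2={}^{g_1g_2}S_1.
\]
Now $S_1\cap S\ni J$ because $g_1\in G_J$ fixes $J\in S_1$, and $S\cap S_2={}^{g_1}(S_1\cap{}^{g_2}S_1)\ni{}^{g_1}K$ because $g_2\in G_K$ fixes $K\in S_1$. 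Since $I\in S_1$ and ${}^{g_1g_2}I\in S_2$, this is a genuine three-sphere chain from $I$ to $\Phi(g_1,g_2)$, with no further adjustment of the quaternionic frame needed. In other words, the ``delicate point'' disappears once you note that the correct interpolation between $S_1$ and ${}^{g_1g_2}S_1$ is ${}^{g_1}S_1$ rather than ${}^{g_2}S_1$: act by the \emph{outer} factor first on the sphere, so that the two successive joints are $J$ and ${}^{g_1}K$.

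With this correction your argument is essentially the paper's proof; the remaining global step (equivalence classes are open, $Compl$ is connected) is exactly as you wrote and as in the paper.
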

\begin{proof}
Choose a complex structure $J$,  anticommuting with $I$, and consider the sphere $S=S(J,I)$
 and the cone
$C_J$.
By Lemma \ref{lempair}, the complex structures $K=IJ$ and $I$ span the sphere $S=S(K,I)=S(J,I)$. We can then form 
the cone $C_{K}$ whose intersection with $C_J$ contains $S$.
See Picture 1 below where the cones $C_J$ and $C_K$ are depicted by transversal planes
and the sphere $S$ lying in their intersection is depicted by a line.

We first show that 
the images of $C_{K}$ under the action of $G_J$ (``rotation of $C_K$ around $J$'') sweep out an open neighborhood of $I$ in 
$Compl$.
Since
$\Phi$ is a submersion by Corollary \ref{Phisubmers}, there exist neighborhoods  $U_{e,J}\subset G_J$
and $U_{e,K} \subset G_K$ of $e$ such that the set $\Phi(U_{e,J}\times U_{e,K})$
contains an open neighborhood
of $I$.
By definition, the cone $C_K$ contains the orbit $G_K\cdot I$. 
Hence the union $\bigcup_{g\in G_J}{}^gC_K$ contains the image of $\Phi$
and consequently it contains an open neighborhood of $I$.

Now the three twistor spheres connecting $I$ to an arbitrary point $I_1$ in this neighborhood are found as illustrated in the following picture. 

\vspace*{0.8cm}

\input{Picture1-2.pic}

\vspace*{-12cm}
\begin{center}
Picture 1.
\end{center}
\medskip

Finally we conclude that each of the two connected components of  $Compl$ is twistor path connected.
\end{proof}

\subsection{} Another immediate consequence of the injectivity of $\widetilde{d\Phi_{(e,e)}}$ proved in Proposition \ref{Injectivity-proposition}
is the following

\begin{cor}\label{cortrans}
For a quaternionic triple $I,J,K$, the triple intersection of the submanifolds $G_I/G_\HH,G_J/G_\HH$ and $G_K/G_\HH$ of the homogeneous
space $G/G_\HH$ at $eG_\HH$ is transversal.
\end{cor}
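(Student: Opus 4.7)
The plan is to reduce the geometric transversality assertion to the linear-algebraic statement that, inside $T_{eG_\HH}(G/G_\HH) \cong T_eG/T_eG_\HH$, the three tangent subspaces $T_eG_I/T_eG_\HH$, $T_eG_J/T_eG_\HH$, $T_eG_K/T_eG_\HH$ are complementary, i.e.\ their internal sum is direct and equals the ambient tangent space. This is exactly the content of triple transversality at $eG_\HH$: it forces the pairwise tangent intersections to vanish and the total sum to span the whole ambient tangent space, so in particular the triple intersection attains the expected codimension.

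The key ingredient is Proposition \ref{Injectivity-proposition}. Since $T_eG_\HH \subset T_eG_I$, the quotient map $T_eG/T_eG_\HH \twoheadrightarrow T_eG/T_eG_I = T_ICompl$ factors through the further quotient by $T_eG_I/T_eG_\HH$; Proposition \ref{Injectivity-proposition} says that the composite sends $T_eG_J/T_eG_\HH \oplus T_eG_K/T_eG_\HH$ isomorphically onto $T_ICompl$. I would exploit this as follows: for any $x \in T_eG/T_eG_\HH$, lift the image of $x$ in $T_ICompl$ through this isomorphism to obtain $(y_J, y_K) \in T_eG_J/T_eG_\HH \oplus T_eG_K/T_eG_\HH$. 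Then $x - y_J - y_K$ lies in the kernel $T_eG_I/T_eG_\HH$, so the summation map
$$T_eG_I/T_eG_\HH \,\oplus\, T_eG_J/T_eG_\HH \,\oplus\, T_eG_K/T_eG_\HH \;\longrightarrow\; T_eG/T_eG_\HH$$
is surjective. A dimension count (each summand has real dimension $\dim_\RR GL_{2n}(\CC) - \dim_\RR GL_n(\HH) = 8n^2 - 4n^2 = 4n^2$, while $T_eG/T_eG_\HH$ has dimension $16n^2 - 4n^2 = 12n^2$) then promotes this surjection to an isomorphism, yielding the required direct sum decomposition.

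There is no substantive obstacle here: once Proposition \ref{Injectivity-proposition} is available, the remainder of the argument is a short diagram chase together with a dimension check. The only point requiring care is the precise interpretation of ``triple transversality'', which is what motivates proving the direct sum decomposition rather than settling for mere pairwise transversality.
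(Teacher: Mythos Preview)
Your proposal is correct and follows essentially the same approach as the paper. The paper simply states that the corollary is an immediate consequence of Proposition \ref{Injectivity-proposition} without further elaboration, and in the proof of Proposition \ref{Proposition-general-transversality} it makes explicit that the transversality is taken to mean precisely the direct sum decomposition $T_eG/T_eG_\HH = V_I \oplus V_J \oplus V_K$; you have correctly supplied the diagram chase and dimension count that turn Proposition \ref{Injectivity-proposition} into this decomposition.
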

The following generalization of this transversality is one of the main ingredients of the proof of connectivity by generic twistor paths in Section 
\ref{Connectivity-by-generic-twistor-paths}.

\begin{prop}
\label{Proposition-general-transversality}
Let $I_1,I_2,I_3$ be complex structures belonging to the same twistor sphere $S$.
The submanifolds $G_{I_1}/G_\HH, G_{I_2}/G_\HH, G_{I_3}/G_\HH$ in $G/G_\HH$
intersect transversally (as a triple) if and only if $I_1,I_2,I_3$ are linearly independent as vectors in 
$End(V_\RR)$.
\end{prop}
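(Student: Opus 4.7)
The problem reduces to linear algebra at $eG_\HH \in G/G_\HH$. Set $\mathfrak{g} := End\,V_\RR$; for $q \in \mathfrak{g}$ let $\mathfrak{g}_q$ denote its centralizer, and set $\mathfrak{g}_\HH := \mathfrak{g}_I \cap \mathfrak{g}_J$, the centralizer of $\HH = \HH(S)$. Then $T_{eG_\HH}(G/G_\HH) = \mathfrak{g}/\mathfrak{g}_\HH$ and $T_{eG_\HH}(G_{I_j}/G_\HH) = \mathfrak{g}_{I_j}/\mathfrak{g}_\HH$, and, as in Corollary~\ref{cortrans}, transversality of the triple at $eG_\HH$ amounts to
\[
\mathfrak{g}_{I_1} + \mathfrak{g}_{I_2} + \mathfrak{g}_{I_3} = \mathfrak{g}.
\]

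First I would fix any quaternionic basis $I,J,K$ of $\HH$ and establish the $\RR$-linear direct sum decomposition
\[
\mathfrak{g} = \mathfrak{g}_\HH \oplus I\mathfrak{g}_\HH \oplus J\mathfrak{g}_\HH \oplus K\mathfrak{g}_\HH.
\]
These are the four joint eigenspaces of the commuting involutions $\mathrm{Ad}_I, \mathrm{Ad}_J$ on $\mathfrak{g}$ (the remaining joint eigenspaces vanish, since commuting with both $I$ and $J$ forces commuting with $K=IJ$); left multiplication by $I,J,K$ supplies explicit $\RR$-linear isomorphisms $\mathfrak{g}_\HH \xrightarrow{\sim} I\mathfrak{g}_\HH,\,J\mathfrak{g}_\HH,\,K\mathfrak{g}_\HH$. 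Equivalently, the map $q \otimes Y \mapsto qY$ gives an $\RR$-linear isomorphism $\HH \otimes_\RR \mathfrak{g}_\HH \xrightarrow{\sim} \mathfrak{g}$.

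The central claim is that for \emph{every} $I_j \in S$,
\[
\mathfrak{g}_{I_j} = \mathfrak{g}_\HH \oplus I_j\mathfrak{g}_\HH.
\]
Both summands are contained in $\mathfrak{g}_{I_j}$: $\mathfrak{g}_\HH$ centralizes all of $\HH$, and for $Y \in \mathfrak{g}_\HH$ the identity $(I_jY)I_j = I_j(YI_j) = I_j^2 Y = -Y = I_j(I_jY)$ shows $I_jY \in \mathfrak{g}_{I_j}$. The sum is direct because $I_j\mathfrak{g}_\HH \subset I\mathfrak{g}_\HH \oplus J\mathfrak{g}_\HH \oplus K\mathfrak{g}_\HH$ meets $\mathfrak{g}_\HH$ trivially in the decomposition above; equality then follows by dimensions, $\dim_\RR \mathfrak{g}_{I_j} = 8n^2 = 4n^2 + 4n^2$.

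Finally, writing $I_j = a_jI + b_jJ + c_jK$, the subspace $\mathfrak{g}_{I_j}/\mathfrak{g}_\HH$ corresponds under $\mathfrak{g}/\mathfrak{g}_\HH \cong \mathrm{Im}\,\HH \otimes_\RR \mathfrak{g}_\HH$ to $\RR I_j \otimes \mathfrak{g}_\HH$, so
\[
\sum_{j=1}^3 \mathfrak{g}_{I_j}\big/\mathfrak{g}_\HH \;\cong\; L \otimes_\RR \mathfrak{g}_\HH, \qquad L := \mathrm{span}_\RR(I_1,I_2,I_3) \subset \mathrm{Im}\,\HH.
\]
This fills out the full ambient space $\mathrm{Im}\,\HH \otimes_\RR \mathfrak{g}_\HH$ if and only if $L = \mathrm{Im}\,\HH$, i.e.\ $\dim_\RR L = 3$, which is precisely the condition that $I_1,I_2,I_3$ be linearly independent in $End\,V_\RR$ (they already lie in the 3-dimensional subspace $\mathrm{Im}\,\HH$). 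The main obstacle is the identification $\mathfrak{g}_{I_j} = \mathfrak{g}_\HH \oplus I_j\mathfrak{g}_\HH$ for an \emph{arbitrary} $I_j \in S$, not just for a quaternionic basis vector; once that is secured, the proposition reduces to the elementary fact that three vectors span a $3$-dimensional real vector space iff they are linearly independent.
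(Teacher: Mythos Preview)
Your proof is correct, and it takes a cleaner, more conceptual route than the paper's. Both arguments start from the same decomposition $\mathfrak{g}/\mathfrak{g}_\HH = V_I \oplus V_J \oplus V_K$ (in the paper's notation $V_I := T_eG_I/T_eG_\HH$, which is your $I\mathfrak{g}_\HH$ modulo $\mathfrak{g}_\HH$). The paper, however, never identifies $V_{I_j} := \mathfrak{g}_{I_j}/\mathfrak{g}_\HH$ directly for a general $I_j \in S$; instead, given $X \in V_1$ it writes out components $X_I, X_J, X_K$ and unpacks the relation $[X,I_1]=0$ to express $X_J, X_K$ in terms of $X_I$ via the bracket isomorphisms $F_I = [\,\cdot\,,I]$, $F_J = [\,\cdot\,,J]$, $F_K = [\,\cdot\,,K]$. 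Doing the same for $Y \in V_2$, $Z \in V_3$ turns $X+Y+Z=0$ into a $3\times 3$ linear system whose coefficient matrix has columns proportional to the coordinate vectors of $I_1,I_2,I_3$, and whose singularity is precisely their linear dependence. Your observation that $\mathfrak{g} \cong \HH \otimes_\RR \mathfrak{g}_\HH$ (and that conjugation by $I_j$ acts as $\mathrm{Ad}_{I_j} \otimes \mathrm{id}$) bypasses all of this: $\mathfrak{g}_{I_j}/\mathfrak{g}_\HH$ is literally $\RR I_j \otimes \mathfrak{g}_\HH$, so transversality reduces to whether $\RR I_1 + \RR I_2 + \RR I_3 = \mathrm{Im}\,\HH$. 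The paper's approach is more explicit about the linear-algebraic content in coordinates, but yours makes transparent that the whole question is governed by the centralizer structure inside the four-dimensional algebra $\HH$, with $\mathfrak{g}_\HH$ just along for the ride.
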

\begin{proof} Choose anticommuting 
complex structures $I,J$ in $S$, and set $K=IJ$. 
By Corollary \ref{cortrans},
\begin{equation}
\label{I-J-K-transversality}
T_eG/T_eG_\HH=V_I \oplus V_J \oplus  V_K,
\end{equation}
where we set  $V_I:=T_eG_{I}/T_eG_\HH, V_J:=T_eG_{J}/T_eG_\HH, V_K:=T_eG_{K}/T_eG_\HH$.

We shall prove that $T_eG/T_eG_\HH$ also decomposes into the direct sum of its subspaces 
$V_i:=T_eG_{I_i}/T_eG_\HH, i=1,2,3.$ Put $I_i=a_iI+b_iJ+c_iK, i=1,2,3$. 
Assume, on the contrary, that for certain vectors 
$X \in V_1, Y \in V_2$ and $Z \in V_3$
we have $X+Y+Z=0$. Let $X:=X_I+X_J+X_K$ be the decomposition of $X$ into the sum of its components 
in the respective subspaces of (\ref{I-J-K-transversality}),
and do similarly for $Y$ and $Z$.  Then for $X$ the commutation relation $[X,I_1]=0$
can be written as 
$$a_1[X_J+X_K,I]+b_1[X_I+X_K,J]+c_1[X_I+X_J,K]=0.$$
Note that in the above expression, the term $[X_J,I]$, for example, anticommutes with both $I,J$, hence commutes with $K=IJ$,
and an analogous commutation relation holds for the other terms as well. Hence we can decompose the expression on the left side of the above equality 
with respect to (\ref{I-J-K-transversality}):
$$(b_1[X_K,J]+c_1[X_J,K])+(a_1[X_K,I]+c_1[X_I,K])+(a_1[X_J,I]+b_1[X_I,J])=0.$$
From here we conclude that $b_1[X_K,J]+c_1[X_J,K]=0, a_1[X_K,I]+c_1[X_I,K]=0$
and $a_1[X_J,I]+b_1[X_I,J]=0$. Perturbing
the quaternionic triple $I,J,K$, we may assume that all $a_i, i=1,2,3,$ are nonzero. Then we can use
the last two equalities to express 
\begin{equation}
\label{X-I-reduction}
[X_J,I]=-\frac{b_1}{a_1}[X_I,J], \quad [X_K,I]=-\frac{c_1}{a_1}[X_I,K].
\end{equation}
Note  that $F_J := [\cdot,J] \colon V_I \rightarrow V_K, F_K := [\cdot, K] \colon V_I \rightarrow V_J$
and $F_I := [\cdot, I] \colon V_J \rightarrow V_K$ are isomorphisms of the respective vector spaces. Then, using (\ref{X-I-reduction}),
we can write 
$$X_J=-\frac{b_1}{a_1}F^{-1}_I \circ F_J(X_I), \quad X_K= -\frac{c_1}{a_1}F^{-1}_I \circ F_K(X_I),$$
so that
$$X=X_I +\left(-\frac{b_1}{a_1}F^{-1}_I \circ F_J(X_I)\right) +  \left(-\frac{c_1}{a_1}F^{-1}_I \circ F_K(X_I)\right).$$
Using $a_2,a_3 \neq 0$, we obtain similar expressions for $Y$ and $Z$. 
Since $F_I, F_J, F_K$ are isomorphisms, the equality $X+Y+Z=0$ can now be written as 
\[
\left(\begin{array}{ccc}
1 & 1 & 1\\
-\frac{b_1}{a_1} & -\frac{b_2}{a_2} & -\frac{b_3}{a_3}\\
-\frac{c_1}{a_1} & -\frac{c_2}{a_2} & -\frac{c_3}{a_3}
\end{array}\right)
\left(\begin{array}{c}
X_I \\
Y_I\\
Z_I 
\end{array}\right)=
\left(\begin{array}{c}
0 \\
0\\
0 
\end{array}\right).
\]
This has a nontrivial solution if and only if the columns of the matrix, i.e., $I_1,I_2,I_3$, are linearly dependent.
\end{proof}

\subsection{} We can now prove that the cone $C_I$ has a real analytic structure. Define the incidence correspondence
\[
S_I := \{ (S, J) \mid J\in S \} \subset M_I \times Compl.
\]
Then $S_I$ is an $S^2$-bundle over $M_I$ and $C_I$ is the image of $S_I$ by the projection to $Compl$:
\[
\xymatrix{
N_I \ar@{^{(}->}[r] \ar[rd] & S_I \ar[d]^{pr_1} \ar[r]^-{pr_2} & C_I \subset Compl\\
& M_I.
}
\]
The projection $S_I \ra M_I$ has two sections $\sigma_+$ and $\sigma_-$, given by $+I$ and $-I$ respectively.
\begin{prop}
\label{Prop-cone-dimension}
The real-analytic map  $pr_2: S_I \ra C_I$ is a diffeomorphism away from the images of $\sigma_{\pm}$ and contracts these images to points. Therefore the cone $C_I$ is a real-analytic subset of $Compl$ of dimension $4n^2+1$, smooth away from 
the points $\pm I$.
\end{prop}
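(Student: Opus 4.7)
The plan is to analyze the parametrization $pr_2 \colon S_I \to Compl$ in three stages: set-theoretic fibers, injectivity of the differential away from $\sigma_\pm$, and the real-analytic structure of the image.

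For the fibers, I first note that every twistor sphere in $M_I$ contains both $\pm I$, so $pr_2$ sends each section $\sigma_\pm(M_I)$ to the point $\pm I$. For $J \in C_I \setminus \{\pm I\}$, Lemma \ref{lempair} yields a unique twistor sphere $S(I, J) \in M_I$ through $I$ and $J$, so $pr_2^{-1}(J) = \{(S(I, J), J)\}$. The inverse $J \mapsto (S(I, J), J)$ is real-analytic on $C_I \setminus \{\pm I\}$, since, as in the proof of Lemma \ref{lempair}, the sphere $S(I, J)$ is recovered from the quaternion subalgebra of $End\, V_\RR$ generated by $I$ and $J$.

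To prove $d(pr_2)$ is injective at each $(S_0, J)$ with $J \neq \pm I$, I factor through the surjective submersion $\alpha \colon G_I \times S_0 \to S_I$, $(g, J') \mapsto (g \cdot S_0, g \cdot J')$, so that $pr_2 \circ \alpha$ has differential $(X, Y) \mapsto [X, J] + Y$ at $(e, J)$. Injectivity of $d(pr_2)$ reduces to showing that every $(X, Y) \in T_e G_I \oplus T_J S_0$ satisfying $[X, J] + Y = 0$ already lies in $\ker d\alpha = \{(X, -[X, J]) : X \in T_e G_{I, S_0}\}$, a subspace of real dimension $\dim G_{I, S_0} = 4n^2 + 1$. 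The key observations are that $[X, J]$ anticommutes with $I$ for every $X \in T_e G_I$, and that for $J \neq \pm I$ the intersection of $T_J S_0$ with the subspace of $End\, V_\RR$ anticommuting with $I$ is the one-dimensional line $\RR \cdot [I, J]$. Since $\ker [\,\cdot\,, J]|_{T_e G_I}$ has real dimension $4n^2$ (being $T_e G_\HH$ for the quaternion algebra generated by $I$ and $J$), a dimension count produces exactly $4n^2 + 1$, forcing the desired equality.

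Hence $pr_2|_{S_I \setminus \sigma_\pm}$ is a real-analytic immersion between manifolds of the same real dimension $4n^2 + 1$ and a continuous bijection with continuous inverse onto $C_I \setminus \{\pm I\}$, so it is a real-analytic diffeomorphism onto its image. This exhibits $C_I \setminus \{\pm I\}$ as a real-analytic submanifold of $Compl$ of dimension $4n^2 + 1$; the full cone $C_I$ is closed in $Compl$ and inherits a real-analytic subset structure from the parametrization by $S_I$, with the two isolated conical singularities at $\pm I$ arising as the images of the contracted sections $\sigma_\pm$. The main obstacle is the differential calculation: identifying $T_J S_0 \cap \{Z \in End\, V_\RR : IZ + ZI = 0\}$ as the one-dimensional line $\RR \cdot [I, J]$ and verifying that this line lies in the image of $[\,\cdot\,, J]|_{T_e G_I}$ requires careful bookkeeping with the $I$-involution $Z \mapsto -IZI$ and the chosen quaternion basis of $S_0$.
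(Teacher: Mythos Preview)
Your argument is correct and follows a route that is close in spirit to the paper's but differs in one key respect. Both proofs parametrize $C_I$ near $J\neq\pm I$ by combining the $G_I$-action on $M_I$ with motion inside the fixed sphere $S_0=S(I,J)$. The paper, however, parametrizes the sphere direction by a one-parameter rotation around an auxiliary point $K\in S_0\setminus\langle I,J\rangle_\RR$, writing $\Phi(X,t)=e^X e^{tK}Je^{-tK}e^{-X}$ on $(T_eG_I/T_eG_\HH)\times\RR$, and then invokes Proposition~\ref{Proposition-general-transversality} (transversality of $G_{I}/G_\HH,\,G_{J}/G_\HH,\,G_{K}/G_\HH$ for linearly independent $I,J,K$) to conclude that this is a local parametrization. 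Your approach instead keeps the full two-dimensional $T_JS_0$, works with the submersion $\alpha\colon G_I\times S_0\to S_I$, and reduces injectivity of $d(pr_2)$ to the equality $\ker d(pr_2\circ\alpha)=\ker d\alpha$, which you verify by a direct commutator computation: the crucial step is that $[X,J]$ anticommutes with $I$ for all $X\in T_eG_I$ (since $IJ+JI$ is a scalar), so any $Y\in T_JS_0$ with $Y=-[X,J]$ must lie in the one-dimensional line $\RR\cdot[I,J]$, and then $X\in \RR I+T_eG_\HH=T_eG_{I,S_0}$. This bypasses Proposition~\ref{Proposition-general-transversality} entirely and makes the immersion statement self-contained; the paper's version is shorter on the page but leans on a result proved for other purposes. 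Either way the dimension comes out as $(4n^2-1)+2=4n^2+1$.
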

\begin{proof}
First note that $pr_2$ clearly contracts the images of $\sigma_{\pm}$. Also, it is injective away from $\pm I$ by Lemma \ref{lempair}. To see that it is also an immersion away from $\pm I$, let $J$ be in $ C_I\setminus\{\pm I\}$, not necessarily anticommuting with $I$.
Define the following mapping
$$\Phi\colon (T_eG_I/T_eG_\HH) \times \RR \rightarrow C_I,$$
$$(X,t) \mapsto e^Xe^{tK}Je^{-tK}e^{-X},$$
where $K \in S(I,J)\setminus S^1$ for
$S^1=\langle I,J\rangle_\RR \cap S(I,J)$.
Then the restriction of $\Phi$ on a small enough neighborhood of $(0,0) \in  (T_eG_I/T_eG_\HH) \times \RR$ defines a parametrization of  $C_I$ around $J$.

Here the subgroup $e^{tK}, t \in \RR$, rotates the sphere $S = S(I,J)$ around the axis $\{\pm K\}$
and, together with the rotation subgroup $e^{tI} \subset G_{I,S} \subset G_I$, sweeps out in $S$, via the above action,
a neighborhood of any point of $S$ other than $\pm I, \pm K$. Proposition \ref{Proposition-general-transversality} provides  that $K$ may be chosen arbitrarily in $S\setminus S^1$, which in turn gives us that $C_I$ is a manifold, smooth away 
from $\pm I$,
of dimension $\dim_\RR (G_I/G_{I,S})+\dim_\RR S=(4n^2-1)+2=4n^2+1$.
The fact that the points $\pm I$ are indeed singular points of the cone $C_I$ is easy to prove.
\end{proof}

\section{Connectivity by generic twistor paths}
\label{Connectivity-by-generic-twistor-paths}

Recall that a period in $Compl$ is {\it generic} if the corresponding complex torus
has trivial N\'eron-Severi group. 
A twistor path in $Compl$
is called {\it generic}, if its successive twistor spheres intersect at generic periods.
In this section we prove the connectivity part of  Theorem \ref{Theorem-tw-path-conn}, i.e.,

\begin{prop} 
\label{Proposition-generic-connectivity}
Any two periods in the  period domain $Compl$ can be connected by a generic twistor path.
\end{prop}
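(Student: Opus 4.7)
The plan is to enhance the three-sphere connectivity of Theorem~\ref{Theorem-connectivity} by showing that, within the positive-dimensional family of three-sphere paths connecting two nearby periods, one can arrange both intermediate intersection points to lie outside the non-generic locus. I would first verify that the non-generic locus $NG \subset Compl$ is meager. Fix a full-rank lattice $\Gamma \subset V_\RR$, so that every period $I \in Compl$ determines a complex torus $V_\RR/\Gamma$ whose second integral cohomology is $\wedge^2 \Gamma^\ast$. For each nonzero primitive integral class $\alpha \in \wedge^2\Gamma^\ast$, the condition that $\alpha$ be of Hodge type $(1,1)$ with respect to $I$ is a closed real-analytic condition defining a proper real-analytic subvariety $NG_\alpha \subset Compl$ (propriety follows from the existence of periods for which $\alpha$ is not $I$-invariant). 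Hence $NG = \bigcup_\alpha NG_\alpha$ is a countable union of proper real-analytic subvarieties and is therefore nowhere dense.

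By connectedness of $Compl$ together with Theorem~\ref{Theorem-connectivity}, it suffices to establish the local statement: for every $I$ and every $I_1$ in a sufficiently small neighborhood of $I$, there is a generic three-sphere twistor path from $I$ to $I_1$. By Lemma~\ref{lempair}, any such path is determined by the ordered pair $(p_1, p_2)$ of intermediate intersection points, which satisfies $p_1 \in C_I$, $p_2 \in C_{I_1}$, and $p_2 \in C_{p_1}$. I would study the real-analytic parameter space
\[
P(I, I_1) := \{(p_1, p_2) \in C_I \times C_{I_1} : p_2 \in C_{p_1}\}.
\]
Nonemptiness of $P(I, I_1)$ is granted by Theorem~\ref{Theorem-connectivity}. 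To show that $P(I, I_1)$ has positive dimension, I would fiber it over $C_I$ via the first projection $(p_1, p_2) \mapsto p_1$; the fiber over $p_1$ is $C_{p_1} \cap C_{I_1}$. By Proposition~\ref{Prop-cone-dimension}, each of $C_{p_1}$ and $C_{I_1}$ has real dimension $4n^2 + 1$ in $Compl$ of real dimension $8n^2$, so their transverse intersection (ensured near the relevant points by the corollary following Proposition~\ref{Prop-cone-dimension}) has dimension $2$, yielding $\dim_\RR P(I, I_1) \geq 4n^2 + 3 > 0$.

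With positive dimension in hand, the generic path emerges by a Baire-type argument. The projections $\pi_1 \colon P(I, I_1) \ra C_I$ and $\pi_2 \colon P(I, I_1) \ra C_{I_1}$ are real-analytic. The preimages $\pi_1^{-1}(NG \cap C_I)$ and $\pi_2^{-1}(NG \cap C_{I_1})$ are each countable unions of proper real-analytic subvarieties of $P(I, I_1)$, hence nowhere dense. Their union is therefore nowhere dense, and its complement in $P(I, I_1)$ is open and dense, consisting precisely of pairs $(p_1, p_2)$ with both intermediate points generic; any such pair yields the required generic three-sphere path.

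The hardest part I expect is establishing transversality of the two cones $C_{p_1}$ and $C_{I_1}$ at $p_2$ uniformly as $p_1$ varies over a neighborhood in $C_I$ and as $I_1$ moves near $I$. The corollary following Proposition~\ref{Prop-cone-dimension} supplies transversality at the apex $\pm I$ of a pair of cones $C_J, C_K$ associated to a quaternionic triple, but one must extend this to the general configurations appearing in $P(I, I_1)$. Proposition~\ref{Proposition-general-transversality}, applied to linearly independent complex structures on a common twistor sphere, should permit the avoidance of degenerate configurations by a suitable perturbation of the auxiliary data in the construction of Theorem~\ref{Theorem-connectivity}.
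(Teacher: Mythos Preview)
Your overall strategy---parametrizing the family of three-sphere paths joining two nearby periods and applying a Baire-category argument to avoid the non-generic locus at both intermediate points---is exactly the strategy of the paper. However, there is a genuine gap at the step where you assert that $\pi_1^{-1}(NG\cap C_I)$ and $\pi_2^{-1}(NG\cap C_{I_1})$ are countable unions of \emph{proper} real-analytic subvarieties of $P(I,I_1)$. For this you need $NG_\alpha\cap C_I$ to be a proper subset of $C_I$ for every nonzero $\alpha$, and this does \emph{not} follow from the fact that $NG_\alpha$ is proper in $Compl$. Indeed, $\dim_\RR C_I=4n^2+1$, while a component $Compl_\Omega$ of the non-generic locus can have real dimension as large as $4n^2+2n$ (the abelian-variety locus, for instance), so on dimension grounds alone $Compl_\Omega$ could contain $C_I$. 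The paper singles this out explicitly (see the paragraph preceding Lemma~\ref{Baire-lemma}) and devotes Lemmas~\ref{Baire-lemma}, \ref{Lemma-open-subset-invariance}, and \ref{Lemma-generic-period} to ruling it out; the last of these requires a direct matrix computation bounding the codimension of $Compl_\Omega$ inside the $G_I$-orbit of an anticommuting $J$. Without an argument of this sort, your Baire step is unsupported.

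On the second point you flag---the structure of $P(I,I_1)$ and transversality of $C_{p_1}\cap C_{I_1}$---the paper takes a somewhat different route. Rather than analyzing $P(I,I_1)$ as an incidence variety and proving its smoothness/irreducibility directly, it constructs explicit local real-analytic maps $\Psi^{I_1\to I_2}$ and $\Psi^{I_2\to I_1}$ between neighborhoods in $C_{I_1}\times_{M_{I_1}}C_{I_1}$ and $C_{I_2}\times_{M_{I_2}}C_{I_2}$, and shows (Lemma~\ref{lemma-composition}) that after suitable shrinking they are mutually inverse. This sidesteps the need to verify transversality of the cones at all points of the intersection: the fiber description of Lemma~\ref{Fiber-lemma} together with Proposition~\ref{Proposition-general-transversality} already controls the local picture. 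Your direct dimension count on $P(I,I_1)$ could presumably be made to work, but as you yourself note, extending the cone-transversality corollary beyond the apex would require additional argument.
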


In this section, with the exception of Lemma \ref{Lemma-generic-period} and its proof, we do not assume that the complex structures $I,J,K$ (with or without subscripts) anticommute.

\subsection{Outline of the proof}\label{subsecoutline} 
Define $\mathcal T$ to be the closure, in $Compl \times Compl \times Compl$, of the set of triples $(I,J,K)$ that are linearly independent and belong to the same twistor sphere. Denote by
\[
\begin{array}{cccl}
pr_1\colon & Compl\times Compl \times Compl & \longrightarrow & Compl,\\
pr_{23}\colon & Compl\times Compl \times Compl & \longrightarrow & Compl \times Compl
\end{array}
\]
the respective projections.
For $(I_1,J_1,K_1)\in \cT$, we defined, in Paragraph \ref{subsecPhi}, the mapping $\Phi_{I_1, J_1,K_1}$:
\[
\begin{array}{cccl}
\Phi_{I_1, J_1,K_1} \colon & G_{J_1} \times G_{K_1} & \longrightarrow & Compl, \\
& (g_1,g_2) & \longmapsto & g_1g_2I_1g_2^{-1}g_1^{-1}=\, ^{g_1g_2}I_1.
\end{array}
\]
Proposition \ref{Proposition-general-transversality}
tells us that, when $I_1,J_1,K_1$ are linearly independent, $\Phi_{I_1, J_1,K_1}$ is a submersion near $(e,e) \in G_{J_1} \times G_{K_1}$.
In other words, there is a
neighborhood $U_{e,G} \subset G=GL^+(V_\RR)$ of $e\in G$ 
such that the map $\Phi_{I_1,J_1,K_1}$ is submersive on $U_{e,J_1} \times U_{e,K_1}$, where 
$U_{e,J_1}:=U_{e,G} \cap G_{J_1}$ and $U_{e,K_1}:=U_{e,G} \cap G_{K_1}$
 (and the image is, thus, a neighborhood of $I_1$ in $Compl$).

Let $I_2$ be an arbitrary point in the image of $\Phi_{I_1,J_1,K_1}$ and let $(g_1,g_2) \in U_{e,J_1} \times U_{e,K_1}$ be such that $I_2={}^{g_1g_2}I_1$. 
With this notation, the three twistor spheres connecting $I_1$ to $I_2$ 
are: $S_1:=S(I_1,J_1,K_1)$, $S:={}^{g_1}S_1=S( {}^{g_1}I_1,{}^{g_1}\!J_1=J_1,
{}^{g_1}\!K_1)$ and $S_2:={}^{g_1g_2}S_1=S( I_2,{}^{g_1g_2}J_1,{}^{g_1g_2}K_1 = {}^{g_1}K_1)$,
with the joint points $J_1$ and ${}^{g_1}K$. 

We are going to show 
that,
for a fixed $I_1$, there is a neighborhood $U_{I_1}\subset Compl$ of $I_1$ 
such that for any $I_2\in U_{I_1}$, 
we can choose a generic $J \in C_{I}$, a $K\in S(I,J)$ and find $(g_1,g_2) \in \Phi^{-1}_{I,J,K}(I_1)$ 
as above such that $^{g_1g_2}K$ is also generic.

We begin by proving, in Lemma \ref{Baire-lemma}, that the set of non-generic periods in $C_{I_1}$ is a countable union of proper analytic subsets, i.e., $J$ can be chosen generic.

Next, for $I_2$ close to $I_1$, and with $S_1,S,S_2$ as above, connecting $I_1$ to $I_2$, the initial sphere $S_1$ together with the choice of $J,K\in S_1$, uniquely determines the final sphere $S_2$ together with the pair of periods 
 ${}^{g_1g_2}J,{}^{g_1g_2}K$. 

To justify this uniqueness we first need to control the fibers of the maps $\Phi_{I,J,K}$ in a neighborhood of $(I_1, J_1, K_1)$, which we do in Lemma \ref{Fiber-lemma}.
This allows us to introduce, in Paragraphs \ref{subsecPsi12} and \ref{subsecPsi21}, two maps $\Psi^{I_1\ra I_2}$ and $\Psi^{I_2\ra I_1}$ which, 
roughly speaking, switch $(S_1,J,K)$ and $(S_2,{}^{g_1g_2}J,{}^{g_1g_2}K)$. 

We then show in Lemma \ref{lemPsicompId}, after shrinking our various domains, that the composition of $\Psi^{I_1\ra I_2}$ and $\Psi^{I_2\ra I_1}$ is the identity. Corollary \ref{corfinal} then shows that this implies the irreducibility of the set of triples $(S_1,S,S_2)$ joining $I_1$ and $I_2$ mentioned in the introduction, 
which gives that $J$ and $^{g_1g_2}K$ can both be chosen generic.

Thus the chain of three twistor spheres connecting $I_1$ to $I_2$ for every $I_2$ in some neighborhood of $I_1$
can be chosen in such a way that the periods at the intersections are generic. For arbitrary $I_1$ and $I_2$, we connect them by a path in $Compl$ consisting of generic triple subchains. 

\subsection{} Let us first show that there are generic periods $J \in C_I$. Dimension-wise this is not trivial
because $\dim_\RR C_I=4n^2+1$, whereas the real dimension of the locus of, for example, 
abelian varieties in $Compl$ is $4n^2+2n$. For an alternating form $\Omega$ on $V_\RR$
we denote by $Compl_\Omega$ the locus of periods in $Compl$ at which $\Omega$
represents a class of Hodge (1,1)-type, that is 
$$Compl_\Omega=\{I\in Compl \,|\, \Omega(I\cdot,I\cdot ) = \Omega(\cdot,\cdot)\}.$$ 
If we fix a basis of $V_\RR$ and switch to matrix descriptions, then the condition
$\Omega(I\cdot,I\cdot)=\Omega(\cdot,\cdot)$ simply becomes $^tI\,\Omega I = \Omega$, where $I$ and $\Omega$ also denote the matrices of the corresponding complex structure and alternating form.
The locus of
marked complex tori with nontrivial N\'eron-Severi group is 
$$\mathcal L_{NS}=\bigcup_{0 \neq [\Omega] \in H^2(A,\QQ)} Compl_\Omega,$$
where $A$ is a fixed complex torus.
\begin{lem}
\label{Lemma-open-subset-invariance}
For any alternating form $\Omega$ and any twistor sphere $S$, the intersection $S\cap Compl_\Omega$ is either finite or all of $S$.
\end{lem}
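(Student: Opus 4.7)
The plan is to identify $Compl_\Omega$ as a closed \emph{complex}-analytic subvariety of $Compl$ and then combine this with Corollary \ref{corSanalytic} to reduce the assertion to an elementary fact from one-variable complex analysis. More precisely, if $Compl_\Omega$ is complex-analytic then $S\cap Compl_\Omega$ is a closed complex-analytic subset of the complex submanifold $S\subset Compl$; since $S$ is topologically a $2$-sphere carrying a complex structure it is biholomorphic to $\PP^1$, and any closed complex-analytic subset of $\PP^1$ is either all of $\PP^1$ or a finite set of points. The edge case $\Omega = 0$ is trivial: it gives $Compl_\Omega = Compl$ and $S \cap Compl_\Omega = S$, which is the second alternative.

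To realize $Compl_\Omega$ as a complex-analytic subvariety I would reinterpret the defining equation in terms of the tautological subbundle on the Grassmannian. Extending $\Omega$ to a $\CC$-bilinear alternating form on $V_\RR \otimes \CC$, the condition ${}^tI\,\Omega\, I = \Omega$ is equivalent to $\Omega$ being of Hodge type $(1,1)$ with respect to $I$, which in turn is equivalent to the vanishing of the restriction $\Omega|_{\Lambda^2 P_I}$, where $P_I := (\mathbb 1 - iI)V_\RR$ is the $+i$-eigenspace of $I$ on $V_\RR\otimes\CC$ (the reality of $\Omega$ automatically forces the analogous vanishing on $\Lambda^2\overline{P_I}$). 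By the construction of the complex structure on $Compl$ recalled in Proposition \ref{propCompl}, the assignment $I \mapsto P_I$ is exactly the embedding $Compl \hookrightarrow G(2n,4n)$, so $P$ is the restriction of the tautological \emph{holomorphic} subbundle. Consequently $\Omega|_{\Lambda^2 P}$ is a holomorphic section of a holomorphic vector bundle on $Compl$, and its zero locus $Compl_\Omega$ is a closed complex-analytic subvariety.

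The main subtlety I foresee is precisely this holomorphicity check: the equation ${}^tI\,\Omega\, I = \Omega$ is only real-algebraic in $I$ at face value, and the required dichotomy would genuinely fail in the purely real-analytic category (for example a real-analytic subset of $S^2$ could perfectly well be a circle, as one sees by directly parametrizing $aI+bJ+cK$ and imposing the quadratic constraint). The content of the lemma is therefore exactly that the condition ${}^tI\,\Omega\, I = \Omega$ cuts out a \emph{complex}-analytic, and not merely a real-analytic, subset; once this reformulation via the tautological holomorphic bundle is in hand the conclusion is immediate.
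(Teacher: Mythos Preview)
Your argument is correct and follows essentially the same route as the paper: both $Compl_\Omega$ and $S$ are complex-analytic in $Compl$ (the latter by Corollary~\ref{corSanalytic}), so their intersection is a closed complex-analytic subset of the Riemann sphere $S\cong\PP^1$, hence finite or all of $S$. The paper's proof is a one-line appeal to exactly this fact, whereas you spell out in more detail (via the tautological subbundle and the Hodge-type reformulation) why $Compl_\Omega$ is complex-analytic rather than merely real-analytic --- a point the paper simply takes for granted.
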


\begin{lem}
\label{Lemma-generic-period}
For any nonzero alternating form $\Omega$, the cone $C_I$ is not contained in $Compl_\Omega$.
\end{lem}

Lemma \ref{Lemma-generic-period} immediately implies
\begin{lem}
\label{Baire-lemma}
For every $I \in Compl$ the set of non-generic periods in $C_I$, that is $C_I \cap \mathcal L_{NS}$, 
is a countable union of closed subsets of $C_I$ none of which contains an open neighborhood (in $C_I$)
of any of its points.
\end{lem}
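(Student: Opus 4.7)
The plan is to decompose, using the definition of $\mathcal L_{NS}$,
\[
C_I \cap \mathcal L_{NS} \;=\; \bigcup_{0 \neq [\Omega] \in H^2(A,\QQ)} \bigl(C_I \cap Compl_\Omega\bigr),
\]
and check that each piece is closed in $C_I$ with empty interior; the union is automatically countable since $H^2(A,\QQ)$ is. Each $Compl_\Omega$ is cut out in $Compl$ by the polynomial identity ${}^tI\,\Omega\,I = \Omega$, so it is a closed real-algebraic subvariety of $Compl$, and hence each $C_I \cap Compl_\Omega$ is closed in $C_I$.

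For the empty-interior statement, I would use Proposition \ref{Prop-cone-dimension}: $C_I$ is smooth away from $\{\pm I\}$ with connected smooth locus, hence it is irreducible as a real-analytic set. By the identity principle, a proper closed real-analytic subset of $C_I$ has empty interior, so it suffices to show $C_I \not\subset Compl_\Omega$ for each non-zero $[\Omega] \in H^2(A,\QQ)$. If $\Omega$ is not $I$-invariant then $I \in C_I \setminus Compl_\Omega$ already does the job; otherwise, I need to exhibit some $J \in N_I \subset C_I$ for which $J^*\Omega \neq \Omega$.

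The main obstacle is producing such a $J$ in the $I$-invariant case. Arguing by contradiction, suppose that every $J \in N_I$ preserves $\Omega$. Proposition \ref{transitivity} gives $N_I = GL(V) \cdot J_0$ for any fixed $J_0 \in N_I$, so $(gJ_0 g^{-1})^*\Omega = \Omega$ for every $g \in GL(V) = G_I$. Differentiating at $g = e$ along $A \in \mathfrak{gl}(V)$ and simplifying using the $I$- and $J_0$-invariance of $\Omega$ yields $J_0 A J_0 + A \in \mathfrak{sp}(\Omega)$ for every such $A$; equivalently, the real $4n^2$-dimensional subspace
\[
E_+ \;:=\; \{B \in \mathfrak{gl}(V) : B J_0 + J_0 B = 0\}
\]
is contained in $\mathfrak{sp}(\Omega) \cap \mathfrak{gl}(V)$. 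A direct commutator computation using the quaternionic relations shows $[E_+, E_+] \subset E_- := \mathfrak{gl}(V,\HH)$, which meets $E_+$ only at $0$ and is non-zero already for $n=1$; thus $E_+$ is \emph{not} a Lie subalgebra while $\mathfrak{sp}(\Omega) \cap \mathfrak{gl}(V)$ is, so the Lie span of $E_+$ strictly exceeds $\dim_\RR E_+ = 4n^2 = \dim_\RR \mathfrak{u}(p,q)$. For non-degenerate $\Omega$ this contradicts the inclusion $E_+ \subset \mathfrak{sp}(\Omega) \cap \mathfrak{gl}(V) \cong \mathfrak{u}(p,q)$; the degenerate case reduces to the non-degenerate one by restricting to the quotient $V_\RR/\ker\Omega$, which is $I$-invariant and inherits an induced nondegenerate $I$-invariant form, and any $\bar J \in N_{\bar I}$ lifts to a $J \in N_I$.
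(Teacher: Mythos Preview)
Your overall strategy is sound and genuinely different from the paper's. The paper proves the lemma via two auxiliary results (Lemmas~\ref{Lemma-open-subset-invariance} and~\ref{Lemma-generic-period}): it shows that $Compl_\Omega$ meets each twistor sphere either finitely or entirely, and then, by an explicit block-matrix computation, that the tangent-space condition ${}^tY\Omega+\Omega Y=0$ cuts out a subspace of codimension at least $4n-3$ inside the space of $Y$'s commuting with $I$ and anticommuting with $J$. This gives a quantitative local codimension bound near every $J\in N_I$. You instead argue globally: $C_I$ is an irreducible real-analytic set (via Proposition~\ref{Prop-cone-dimension} and connectedness of $N_I\cong GL(V)/GL(V,\HH)$), so it suffices to show $C_I\not\subset Compl_\Omega$. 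Both routes hinge on the same infinitesimal condition $E_+\subset\mathfrak{sp}(\Omega)$, but you replace the paper's matrix codimension count by the Lie-theoretic observation that $[E_+,E_+]$ lands in $E_-\setminus\{0\}$, forcing $\dim\langle E_+\rangle_{\mathrm{Lie}}>4n^2=\dim\mathfrak{u}(p,q)$. This is elegant and yields only the qualitative ``proper'' conclusion, which is all the lemma needs.

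There is, however, a genuine gap in your treatment of degenerate $\Omega$. Your dimension bound $\dim\bigl(\mathfrak{sp}(\Omega)\cap\mathfrak{gl}(V)\bigr)=4n^2$ fails when $\Omega$ is degenerate (for $\Omega=0$ the intersection is all of $\mathfrak{gl}(V)$). Your proposed reduction to $\bar V=V_\RR/\ker\Omega$ is the right idea, but as written it is incomplete on two points. First, $N_{\bar I}$ is nonempty only if $\dim_\RR\bar V\equiv 0\pmod 4$; a priori $\ker\Omega$ is merely $I$-invariant, so its real dimension is only even. You must use the contradiction hypothesis: since some fixed $J_0\in N_I$ then preserves $\Omega$, the kernel is $J_0$-invariant as well, hence $\HH$-invariant, and its dimension is a multiple of $4$. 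Second, ``any $\bar J\in N_{\bar I}$ lifts to a $J\in N_I$'' requires an argument: choose an $I$-invariant complement $W'$ to $\ker\Omega$, transport $\bar J$ to $W'$, and extend by any complex structure on $\ker\Omega$ anticommuting with $I|_{\ker\Omega}$ (this exists precisely because $\dim\ker\Omega\equiv 0\pmod 4$). One then checks that the lifted $J$ preserves $\Omega$ iff $\bar J$ preserves $\bar\Omega$, so the hypothesis descends and the nondegenerate case on $\bar V$ gives the contradiction. With these additions your argument goes through; without them the degenerate case is not covered. (A minor further omission: the assertion $[E_+,E_+]\neq 0$ deserves at least a one-line verification, e.g.\ by bracketing an ``$A$-type'' and a ``$B$-type'' element in the block model $\bigl(\begin{smallmatrix}A&B\\\bar B&-\bar A\end{smallmatrix}\bigr)$.)
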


\begin{proof}[Proof of Lemma \ref{Lemma-open-subset-invariance}]
Follows from the fact that $Compl_\Omega$ and $S$  are complex analytic subsets of $Compl$.

The twistor sphere $S$ is analytic by Corollary \ref{corSanalytic}.
The subset $Compl_\Omega$ is a complex analytic subvariety in $Compl$
as it is the locus where $\Omega$ belongs to the fiber of a
holomorphic subbundle of the Hodge bundle on $Compl$ arising from the Hodge filtration.
\end{proof}

\begin{proof}[Proof of Lemma \ref{Lemma-generic-period}]
We shall prove the following equivalent statement.

For any $J$ anti-commuting with $I$ and any nonzero alternating form $\Omega$ on $V_\RR$ there is a neighborhood $U_\Omega \subset C_{I}$ of 
$J$ such that the locus $Compl_\Omega$ intersects $U_\Omega$
along a real-analytic subvariety of positive codimension.

If $J \notin Compl_\Omega$ there is nothing to prove. Assume $J \in Compl_\Omega$.

From now on we will identify our operators $I,J$ and the form $\Omega$
with their respective matrices corresponding to a choice of basis of $V_\RR$. 
Consider the orbit of $J$ under the conjugation action of  $G_I$: $G_I \cdot J \cong G_I/G_\HH$. 
Let
\[
\begin{array}{cccc}
\Psi \colon & G_I & \longrightarrow & Compl,\\
& g & \longmapsto & ^g\!J=gJg^{-1},
\end{array}
\]
be the evaluation map of the action.
Put $D_{I, J,\Omega} :=\Psi^{-1}(Compl_\Omega)$, that is, $$D_{I, J,\Omega}=\{g \in G_I\,|\, ^t(^g\!J)\Omega 
({}^g\!J) =\Omega\},$$
(note that $D_{I, J,\Omega}$ need not be a subgroup in $G_I$).
Let $g(\tau)$ be any curve in $D_{I, J,\Omega}$ with tangent 
vector $X := g^\prime(0) \in T_eD_{I, J,\Omega}$ at $e = g(0) \in D_{I, J,\Omega}$. Then, differentiating the constant function $^t(^{g(\tau)}\!J)\Omega (^{g(\tau)}\!J)$
at $\tau=0$ we obtain the equality 
\[
\, ^t\! (XJ-JX)\Omega J+\, ^t\! J \Omega (XJ-JX) = 0.
\]
The left hand side may be simplified, given that $^t\! J\Omega J =\Omega$ 
we substitute $\Omega J=({}^t\! J)^{-1}\Omega={}^t\!(J^{-1})\Omega$
into the first summand and ${}^t\! J \Omega=\Omega J^{-1}$ into the second summand, obtaining
$${}^t\! (X^J-X) \Omega + \Omega (X^J-X)=0,$$
where
\[
X^J:=J^{-1}XJ=JXJ^{-1}.
\]
So, denoting $Y:=X^J-X$, we obtain the equality 
\begin{equation}
\label{Equation-locus}
^tY \Omega + \Omega Y=0,
\end{equation}
where $Y$ commutes with $I$ and anticommutes with $J$. 
Note that for any $X\in T_e G_I$, $X = \frac{1}{2} (X + X^J) + \frac{1}{2} (X - X^J)$, where $X+X^J \in T_e G_I$ 
 commutes with $J$ and $X-X^J\in T_e G_I$ anticommutes with $J$. The tangent space $T_e G_\HH$ is the subspace of elements of $T_e G_I$ that commute with $J$. Hence, the subspace of $Y$'s
in $T_eG_I$ anticommuting with $J$ maps isomorphically onto the quotient space $V_I:= T_e{G_I}/T_e{G_\HH}\cong T_{J}\,(G_I\cdot J)$
under the quotient map $T_e{G_I} \rightarrow V_I$. So we need to check
that for a nonzero $\Omega$ the space of solutions to (\ref{Equation-locus}), which is naturally identified with $T_J\,(G_I\cdot J \cap Compl_\Omega)$, has dimension
strictly less than $\dim_\RR T_{J}\,(G_I\cdot J)=\dim_\RR V_I=4n^2$ (i.e., not all of the orbit $G_I\cdot J$ lies in $Compl_\Omega$).

Now conjugate equation (\ref{Equation-locus}) by $I$ to obtain 
$$^tY \Omega^I + \Omega^I Y=0.$$
Adding and subtracting this from (\ref{Equation-locus}) we obtain
$$^tY (\Omega +\Omega^I) + (\Omega+\Omega^I) Y=0\quad \hbox{and} \quad ^tY (\Omega -\Omega^I) + (\Omega-\Omega^I) Y=0.$$
So we may assume that $\Omega$ is either $I$-invariant 
or $I$-anti-invariant in equation (\ref{Equation-locus}). 

\vskip5pt
{\it Case of $I$-invariant $\Omega$.} As $\Omega$ is $J$-invariant, it determines
a skew-symmetric operator $\Omega \colon V_\RR \rightarrow V_\RR$, commuting with $J$. 
So we may choose
an $\Omega$-invariant plane $P=\langle v, Jv\rangle \subset V_\RR$ corresponding to a complex eigenvector 
$v-iJv$ of 
$\Omega \colon V_\RR \rightarrow V_\RR$ such that the matrix of $\Omega|_P$ is
\[\left (\begin{array}{cc}
0& -\lambda \\
\lambda & 0
\end{array}\right).\]
The complex structure $I$ provides another such plane $IP=\langle Iv, JIv \rangle$, which is also $\Omega$-invariant and orthogonal to $P$, so that on $P\oplus IP=\langle v,Jv,Iv,JIv \rangle$ the matrices of
$\Omega, J$ and $I$ are 4$\times$4-block-diagonal with the following blocks on the diagonal
\[\left(\begin{array}{cc|cc} 
0 & -\lambda &   0 & 0\\ 
\lambda & 0 &   0 & 0 \\
\hline
 0 & 0 & 0 & \lambda \\ 
0 & 0 & -\lambda & 0\end{array}\right), 
\left(\begin{array}{cc|cc} 
0 & -1 &   0 & 0\\ 
1 & 0 &   0 & 0 \\
\hline
 0 & 0 & 0 & -1 \\ 
0 & 0 & 1 & 0\end{array}\right),
\left(\begin{array}{cc|cc} 
0 & 0 &   -1 & 0\\ 
0 & 0 &   0 & 1 \\
\hline
 1 & 0 & 0 & 0 \\ 
0 & -1 & 0 & 0\end{array}\right)
.\]
The condition that $Y$ commutes with $I$ and anticommutes with $J$
tells us that $Y$ has a 4$\times$4-block structure with blocks of the form
\[\left(\begin{array}{cc|cc} 
a_1 & a_2 &   b_1 & b_2\\ 
a_2 & -a_1 &   b_2 & -b_1 \\
\hline
 -b_1 & b_2 & a_1 & -a_2 \\ 
b_2 & b_1 & -a_2 & -a_1\end{array}\right)
.\]

Noting that $\Omega = JD=DJ$ for a diagonal matrix $D$ commuting with $J$, we can rewrite  (\ref{Equation-locus}) as 
\begin{equation}
\label{Equation-locus-short-form}
DY=\;  ^tYD, \; Y^I=Y, \; Y^J=-Y.
\end{equation}
For notational convenience we write the matrix $Y$ in terms of its 2$\times$2-blocks $Y_{k,l}$,
$Y=(Y_{k,l})$, $1\leqslant k,l \leqslant 2n$, and denote by $\mathbb{1}_{2\times 2}$ the 2$\times$2 identity
matrix.
If at least one $\lambda_i$, $1\leqslant i\leqslant n$, in $D$ is nonzero we get for all $1\leqslant j \leqslant n$ the equalities of 4$\times$4-blocks
\[
\begin{array}{l}
\left(\begin{array}{cc} 
\lambda_i  {\mathbb 1_{2 \times 2}}& 0\\ 
0 & -\lambda_i {\mathbb 1_{2 \times 2}}    \\
\end{array}\right) \cdot
\left(\begin{array}{cc} 
Y_{2i-1,2j-1} & Y_{2i-1,2j} \\ 
 Y_{2i,2j-1} & Y_{2i,2j}\\
\end{array}\right)=
\\
=
\left(\begin{array}{cc} 
^tY_{2j-1,2i-1} & ^tY_{2j,2i-1} \\ 
 ^tY_{2j-1,2i} & ^tY_{2j,2i}\\
\end{array}\right) \cdot
\left(\begin{array}{cc} 
\lambda_j  {\mathbb 1_{2 \times 2}}& 0\\ 
0 & -\lambda_j {\mathbb 1_{2 \times 2}}    \\
\end{array}\right).
\end{array}
\]
These matrix equalities
completely determine all $n-1$ off-diagonal 4$\times$4-entries of $Y$ in the $i$-th ``fat'' row of 
4$\times$4-blocks
in terms of the off-diagonal 4$\times$4-entries of the $i$-th ``fat'' column, 
$1\leqslant i \leqslant n$. So the codimension of the space of solutions of
(\ref{Equation-locus-short-form}) is at least 
$4(n-1)$ (precise lower bound that is reached in the least restrictive case $\lambda_j=\lambda_i$ for all $j$). For the diagonal 4$\times$4-entry, $i=j$, we obtain $b_2=0$ in $Y_{2i-1,2i}$, so that the codimension
is at least $4n-3$.
\vskip5pt
{\it Case of $I$-anti-invariant $\Omega$:} This is done similarly and leads to the same codimension bound $\geqslant 4n-3$. Alternatively, one could note that, if $C_I \subset Compl_\Omega$, then, in particular, $\pm I \in Compl_\Omega$, so that $\Omega$ is $I$-invariant, and this is the only case we need consider.
\vskip5pt

Now, by Lemma \ref{Lemma-open-subset-invariance}, either a twistor sphere in $C_I$ entirely lies in 
some $Compl_\Omega$ or
its intersection with $\mathcal L_{NS}$ contains only finitely many points of each $Compl_\Omega$.
If $I\not\in \mathcal L_{NS}$ then no twistor sphere in $C_I$ is contained in 
$\mathcal L_{NS}$. The codimension estimate above then allows us to conclude that, for every nonzero $\Omega$, the subset $C_I \cap Compl_\Omega$ is of codimension 
at least $(4n-3)+2=4n-1>0$ in $C_I$. If $I\in \mathcal L_{NS}$, the lower bound for the codimension is still at least 
$4n-3>0$.
The proof is now complete.
\end{proof}


\subsection{} \label{subsecJ12K12}

The transversality of the triple intersection of $G_{I_1}/G_\HH, G_{J_1}/G_\HH, G_{K_1}/G_\HH$
at $eG_\HH$, which is equivalent
to the direct sum decomposition $T_eG/T_eG_\HH=T_eG_{I_1}/T_eG_\HH \oplus T_eG_{J_1}/T_eG_\HH
\oplus T_eG_{K_1}/T_eG_\HH$, is preserved if we perturb $(I_1,J_1,K_1)\in \cT$ a little. In other words, there is a compact neighborhood
$U_{I_1,J_1,K_1}\subset \mathcal T$ of $(I_1,J_1,K_1)$ and a compact neighborhood
$U_{e,G}\subset G$ such that $\Phi_{I,J,K}\colon U_{e,J} \times U_{e,K} \rightarrow Compl$ 
is a submersion onto its image for all $(I,J,K)\in U_{I_1,J_1,K_1}$. Moreover,
 there is a compact neighborhood $U_{I_1}\subset Compl$ of $I_1$ which is contained in the image
$\Phi_{I,J,K}(U_{e,J}\times U_{e,K})$ for all $(I,J,K)\in U_{I_1,J_1,K_1}$. We will always assume
that for each neighborhood $U_{e,G}$ we made a choice of such $U_{I_1}=U_{I_1}(U_{e,G})$. 
Note that every $I_2\in U_{I_1}$ is a regular value of $\Phi_{I,J,K}$ for all 
$(I,J,K)\in  U_{I_1,J_1,K_1}$.

\begin{lem}
\label{Fiber-lemma}
There exists a  neighborhood $U_{e,G}$ such that
for all $I_2 \in U_{I_1}$ and for all $(I,J,K)\in U_{I_1,J_1,K_1}$, 
the full preimage $\Phi_{I,J,K}^{-1}(I_2)$ 
is an $8n^2$-dimensional submanifold in $U_{e,J} \times U_{e,K}$
of the form 
\begin{equation}
\label{Fiber-equation}
\{(f_1h_1, h_1^{-1}f_2h_2)\mid h_1,h_2 \in G_\HH\} \cap (U_{e,J} \times U_{e,K}),
\end{equation} 
where $(f_1,f_2)$ is a pair in $U_{e,J} \times U_{e,K}$ such that $\Phi_{I,J,K}(f_1,f_2)=I_2$.
\end{lem}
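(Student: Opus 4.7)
The plan is to exhibit the set in (\ref{Fiber-equation}) as the embedded image of an $8n^2$-dimensional submanifold contained in the fiber $\Phi_{I,J,K}^{-1}(I_2)$, which by the submersion property is itself a smooth submanifold of dimension $8n^2$; the two must then agree locally for dimension reasons.

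First I would verify the easy containment. Since $I,J,K$ all lie on the twistor sphere $S(I,J)$, every element of $S$ is a real-linear combination of any anticommuting pair in $S$ together with their product, so $G_\HH$, defined as the centralizer of this quaternion subalgebra of $\mathrm{End}\,V_\RR$, is contained in $G_I\cap G_J\cap G_K$. Hence for any $h_1,h_2\in G_\HH$ the pair $(f_1h_1,\,h_1^{-1}f_2h_2)$ lies in $G_J\times G_K$, and a direct computation using that $h_1,h_2$ commute with $I$ yields
\[
(f_1h_1)(h_1^{-1}f_2h_2)\, I\, (h_1^{-1}f_2h_2)^{-1}(f_1h_1)^{-1}
=f_1f_2\, I\, f_2^{-1}f_1^{-1}=I_2.
\]
Thus the right-hand side of (\ref{Fiber-equation}) is contained in the fiber.

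Next I would carry out the dimension count and show that
\[
\Psi\colon G_\HH\times G_\HH\longrightarrow G_J\times G_K,\qquad (h_1,h_2)\longmapsto (f_1h_1,\,h_1^{-1}f_2h_2),
\]
is an injective immersion near $(e,e)$. By Proposition \ref{Proposition-general-transversality}, $\Phi_{I,J,K}$ is a submersion at $(e,e)$, so its fibers over points of $U_{I_1}$ are smooth of dimension $\dim(G_J\times G_K)-\dim\,Compl=16n^2-8n^2=8n^2$, which matches $\dim(G_\HH\times G_\HH)=8n^2$. Injectivity of $\Psi$ is clear: the first component recovers $h_1$ and the second then determines $h_2$. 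Its differential at $(e,e)$ sends $(X_1,X_2)\in T_eG_\HH\oplus T_eG_\HH$ to $(f_1X_1,\,-X_1f_2+f_2X_2)$, which vanishes only when $X_1=X_2=0$ since $f_1,f_2$ are invertible. Hence $\Psi$ is an embedding near $(e,e)$, with image a smooth submanifold of dimension $8n^2$ contained in the fiber. After possibly shrinking $U_{e,G}$, the image of $\Psi$ therefore coincides with $\Phi_{I,J,K}^{-1}(I_2)\cap(U_{e,J}\times U_{e,K})$.

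The main obstacle will be making this choice of $U_{e,G}$ uniform over all $(I,J,K)\in U_{I_1,J_1,K_1}$ and all $I_2\in U_{I_1}$. Both of these sets were arranged to be compact in Paragraph \ref{subsecJ12K12}, and the ingredients just used---submersion of $\Phi_{I,J,K}$ at $(e,e)$, injectivity of $d\Psi_{(e,e)}$, and the existence of a base preimage $(f_1,f_2)\in U_{e,J}\times U_{e,K}$ of $I_2$---are all open conditions in the parameters $(I,J,K,I_2)$. A standard compactness argument then yields a single neighborhood $U_{e,G}$ that works throughout.
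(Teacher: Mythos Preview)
Your argument is correct and reaches the same conclusion, but by a different mechanism than the paper's proof. You establish the containment of the $G_\HH\times G_\HH$-orbit in the fiber, match dimensions ($8n^2=8n^2$), and conclude that the orbit is open in the fiber; since the orbit is also closed in $G_J\times G_K$ (it is cut out by the closed conditions $f_1^{-1}x\in G_\HH$ and $f_2^{-1}f_1^{-1}xy\in G_\HH$) and connected (as $G_\HH\cong GL_n(\HH)$ is connected), it is a full connected component of the fiber, and a shrinking/compactness argument disposes of any extraneous components.

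The paper instead argues pointwise and algebraically. Given two points $(f_1,f_2)$ and $(g_1,g_2)$ in the fiber, it sets $g_I:=f_2^{-1}f_1^{-1}g_1g_2\in G_I$ and $g_J:=f_1^{-1}g_1\in G_J$, and records the identity $f_2\,g_I=g_J\,g_2$: the same element is written both as a product in $G_K G_I$ and as a product in $G_J G_K$. Proposition~\ref{Proposition-general-transversality} is then invoked a \emph{second} time, not merely for the submersion property but in its guise as a local unique-factorization statement: near $e$, every element of $G$ decomposes as a product in $G_J G_K G_I$ with each factor determined modulo $G_\HH$. Comparing the two factorizations forces $g_I,g_J\in G_\HH$, whence $(g_1,g_2)=(f_1h_1,h_1^{-1}f_2h_2)$ already lies in the orbit of $(f_1,f_2)$. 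A final compactness step (bounding the number of components uniformly and shrinking $U_{e,G}$) is still needed to make this uniform in $(I,J,K,I_2)$, as in your proof.

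So: your route is more topological and uses transversality only once (for the submersion), at the cost of leaving the ``no other component'' step to a soft open--closed--connected argument; the paper's route is more explicit and shows directly, via unique factorization, why any two fiber points near $(e,e)$ must lie in the same $G_\HH\times G_\HH$-orbit.
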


\begin{proof}[Proof of Lemma \ref{Fiber-lemma}] The fact that 
$\Phi_{I,J,K}^{-1}(I_2) \cap(U_{e,J} \times U_{e,K})$ consists of 
a finite number of $8n^2$-dimensional manifolds follows from the regularity of $I_2$.

While the part of the fiber in (\ref{Fiber-equation}) may have been easily guessed, the fact that for a small enough $U_{e,G}$ this is the whole fiber 
follows from Proposition \ref{Proposition-general-transversality}.
Indeed, assuming that we have $(f_1,f_2), (g_1,g_2) \in 
\Phi^{-1}_{I,J,K}(I_2)\subset G_{J}\times G_{K}$, we see that $f_2^{-1}f_1^{-1}g_1g_2 \in G_{I}$.
Setting $g_{I}=f_2^{-1}f_1^{-1}g_1g_2$ and $g_{J}=f_1^{-1}g_1 \in G_{J}$ we 
have the equality
$$f_2g_{I}=g_{J}g_2.$$
The left side of the equality lies in $G_KG_{I}$ and the right side lies in $G_JG_K$.
If we restrict
ourselves to $\Phi^{-1}_{I,J,K}(I_2) \cap (U_{e,G}\times U_{e,G})$ for a small enough neighborhood $U_{e,G} \subset G$ then Proposition  \ref{Proposition-general-transversality} tells us that, 
for every element  in the product $U_{e,J}U_{e,K}U_{e,I}$,
each of its three factors is uniquely determined up to a $G_\HH$-correction. 

So from our equality $f_2g_{I}=g_{J}g_2$ we obtain
$g_{I}, g_{J} \in G_\HH$, which, after setting 
$h_1 := g_J=f_1^{-1}g_1$ and $h_2 :=g_I$, implies
that $g_1=f_1h_1$ and $g_2=g_J^{-1}f_2g_I=h_1^{-1}f_2h_2$.

Since $U_{I_1},U_{I_1,J_1,K_1},U_{e,G}$ are compact and $U_{e,G}$ is independent of the choice of $(I,J,K)\in U_{I_1,J_1,K_1}$,
there is a universal upper bound for the number of connected components
of $\Phi_{I,J,K}^{-1}(I_2)\cap (U_{e,J} \times U_{e,K})$, for all 
$I_2\in U_{I_1}$ and all $(I,J,K)\in U_{I_1,J_1,K_1}$. Therefore we can shrink the compact neighborhood
$U_{e,G}$ so that the fibers $\Phi_{I,J,K}^{-1}(I_2) \cap (U_{e,J} \times U_{e,K})$ for all 
$I_2\in U_{I_1}$ and all $(I,J,K)\in U_{I_1,J_1,K_1}$ contain only the component specified in (\ref{Fiber-equation}).
\end{proof}
Regarding the proof of Lemma \ref{Fiber-lemma}, we note the following.
\begin{rem}
\label{Fiber-deformation-remark}
It is not hard to see that the  fiber $\Phi_{I,J,K}^{-1}(I_2)$ in Lemma \ref{Fiber-lemma},
as a topological subspace of $G \times G$, depends continuously on 
$I_2\in U_{I_1}$ and  $(I,J,K)\in U_{I_1,J_1,K_1}$.
\end{rem}

\begin{rem}
In general,  it is possible that $g \in U_{e,G}$ is not uniquely representable as a triple product 
of elements  in the larger sets $G_{J}$, $G_{K}$, $G_{I}$ 
and thus we cannot say if the whole fiber $\Phi^{-1}_{I,J,K}(I_2) \subset G_{J}\times G_{K}$ consists of just one
$G_\HH \times G_\HH$-orbit as in Lemma \ref{Fiber-lemma}. This is why we possibly need to shrink $U_{e,G}$.
\end{rem}

\subsection{}\label{subsecPsi12} Recall that, for any $I$, $M_I=G_{I}/G_{I,S}$ parametrizes the twistor lines through $I$ (see Paragraph \ref{subsecSMN}). For all $I$, put $U_{I_1,J_1,K_1}(I) :=pr_{23}(pr_1^{-1}(I)\cap U_{I_1,J_1,K_1})$.
Then $U_{I_1,J_1,K_1} (I_1)$ is a neighborhood of
$(J_1,K_1)$ in $C_{I_1}\times_{M_{I_1}} C_{I_1} = pr_{23} (pr_1^{-1} (I_1) \cap \cT)$.
Consider the map 
\[
\begin{array}{cccl}
\Psi^{I_1 \rightarrow I_2} \colon & U_{I_1,J_1,K_1} (I_1) &
\longrightarrow & {C_{I_2}}
\times_{M_{I_2}} C_{I_2} = pr_{23} (pr_1^{-1} (I_2) \cap \cT),\\
& (S(J,K),J,K) & \longmapsto &(S({}^{f_1f_2}J, {}^{f_1f_2}K), {}^{f_1f_2}K, {}^{f_1f_2}J),
\end{array}
\]
where $ (f_1,f_2) \in \Phi^{-1}_{I_1,J,K}(I_2) \cap ({U_{e,J} \times U_{e,K}})$,
and we use, in an obvious way, the triple notation of the kind $(S(J,K),J,K)$ for the elements of the 
fiber products above. Note the switched order of ${}^{f_1f_2}K, {}^{f_1f_2}J$.
The role of this change of order will be clarified later.

Lemma \ref{Fiber-lemma} guarantees that the mapping $\Psi^{I_1 \rightarrow I_2}$
is well-defined, as its value at $(S(J,K),J,K)$ is uniquely determined by the fiber 
$\Phi^{-1}_{I_1,J,K}(I_2) \cap ({U_{e,J} \times U_{e,K}})$, 
so it does not depend on the choice of a particular point in the fiber.

\vspace*{1cm}
\hspace*{40mm}
\input{Picture4-2.pic}

\vspace*{-11.5cm}
\begin{center}
Picture 2: For fixed $I_1$ and $I_2$ any pair $(J,K) \in C_{I_1}\times_{M_{I_1}} C_{I_1}$ near $(J_1,K_1)$ determines a unique pair 
$({}^{f_1f_2}K,\,^{f_1f_2}\!J)\in C_{I_2}\times_{M_{I_2}} C_{I_2}$.
\end{center}
\medskip
\subsection{} 
\label{Inverse-preparation-subsection}
Next, for each $(I,J,K) \in U_{I_1,J_1,K_1}$  consider the mapping 
$\Phi_{I,K,J}$ (note that  we switched $J$ and $K$ in the subscript).
By shrinking the original $ U_{I_1,J_1,K_1}$ and $U_{e,G}$ if needed,
we can find a compact neighborhood $V_{e,G} \subset G$ such that
 for each $(I,J,K) \in U_{I_1,J_1,K_1}$ we have

(a) $\Phi_{I,K,J}\colon V_{e,K} \times V_{e,J} \rightarrow Compl$ is a submersion
onto its image;

(b) every fiber of this mapping is of the form described in Lemma \ref{Fiber-lemma};

and 

(c) the image $\Phi_{I,K,J}( V_{e,K} \times V_{e,J})$ contains 
$U_{I_1} \subset  \underset{(I,J,K)\in U_{I_1,J_1,K_1}}{\bigcap}\Phi_{I,J,K}(U_{e,J}\times U_{e,K})$ (see Paragraph \ref{subsecJ12K12}).
\vskip5pt
By Lemma \ref{Fiber-lemma}, conditions (a) and (b) are satisfied. We need only to comment on (c). By Lemma \ref{Fiber-lemma}, for the original triple $(I_1,J_1,K_1) \in U_{I_1,J_1,K_1}$, we can find $V_{e,G}$ such that 
$\Phi_{I_1,K_1,J_1}\colon  V_{e,K_1} \times V_{e,J_1} \rightarrow Compl$, where
$V_{e,K_1}:=V_{e,G} \cap G_{K_1},V_{e,J_1}:=V_{e,G} \cap G_{J_1}$, satisfies (a) and (b). Shrinking $U_{e,G}$
and, thus, $U_{I_1}$, if needed,
we can satisfy (c) for $\Phi_{I_1,K_1,J_1}$. Now shrinking $U_{I_1,J_1,K_1}$ and again $U_{e,G}$, if needed, we can satisfy conditions (a), (b) and (c)
for all $(I,J,K)\in U_{I_1,J_1,K_1}$.


\subsection{} 
\label{Domain-preparation-subsection}
Now introduce $V_{I_1,K_1,J_1}:=\{(I,K,J)\,|\, (I,J,K)\in U_{I_1,J_1,K_1}\}$
and $V_{I_1,K_1,J_1}(I):=pr_{23}(pr_1^{-1}(I) \cap V_{I_1,K_1,J_1})$.

Then, for all $(I,K,J)$ in the interior of $V_{I_1,K_1,J_1}$, the set
$pr_1(V_{I_1,K_1,J_1})$ is a neighborhood of $I$ in $Compl$ and $V_{I_1,K_1,J_1}(I)$ is a neighborhood of $(K,J)\in C_I\times_{M_I} C_I$.
Note that, due to Condition (c) in Paragraph \ref{Inverse-preparation-subsection},
for all $I \in U_{I_1}\cap pr_1(V_{I_1,K_1,J_1})$ and for all $(K,J)\in V_{I_1,K_1,J_1}(I)$,
the image $\Phi_{I,K,J}(U_{e,K}\times U_{e,J})$ contains the neighborhood $U_{I_1}$.

\subsection{}\label{subsecPsi21} Choose $I_2 \in U_{I_1}\cap pr_1(V_{I_1,K_1,J_1})$ and $K,J$ such that $(I_2,K,J) \in V_{I_1,K_1,J_1}$. Conditions (a),(b) and (c) in Paragraph \ref{Inverse-preparation-subsection} allow us to define, analogously to $\Psi^{I_1 \rightarrow I_2}$, the map
\[
\begin{array}{cccc}
\Psi^{I_2 \rightarrow I_1} \colon &
V_{I_1,K_1,J_1}(I_2) & \longrightarrow & {C_{I_1}}\times_{M_{I_1}} C_{I_1},\\
& (S(J,K),K,J) & \longmapsto & (S({}^{d_1d_2}J, {}^{d_1d_2}K),{}^{d_1d_2}J,{}^{d_1d_2}K),
\end{array}
\]
for $(d_1,d_2) \in \Phi^{-1}_{I_2,K,J}(I_1)$ (again, note the reversed order of $J$ and $K$ in the subscript). 

The period $^{f_1f_2}K$ in Picture 2 above will play the role of the ``rotation center'' 
for  $\Phi_{I_2,\,{}^{f_1f_2}K,\,{}^{f_1f_2}J}$ (here $J,K \in C_{I_1}$),
similar to the role that $J$ plays  for $\Phi_{I_1,J,K}$.
This explains why we switched $J$ and $K$.

Below we will impose restrictions on the domain of $\Psi^{I_1 \rightarrow I_2}$ 
in order for the image of this map to be contained in the domain of $\Psi^{I_2 \rightarrow I_1}$, 
so that we can compose them.

We begin by choosing a compact neighborhood
$U_{J_1,K_1}$ of $(J_1,K_1)$ in $U_{I_1,J_1,K_1} (I_1)$, which can at first be all of $U_{I_1,J_1,K_1} (I_1)$. 
We will later modify $U_{J_1,K_1}$, without changing the original $U_{I_1,J_1,K_1}$.

\begin{lem}\label{lemPsidomains}
For fixed $V_{I_1,K_1,J_1}$, we can shrink $U_{e,G}$ and 
 $U_{J_1,K_1}$ so that for arbitrary $I_2 \in U_{I_1}$,
\[
\Psi^{I_1 \rightarrow I_2}(U_{J_1,K_1}) \subset V_{I_1,K_1,J_1}(I_2).
\]
\end{lem}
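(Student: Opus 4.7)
The plan is to establish the containment by a continuity argument, since every piece of the setup is controlled by neighborhoods that can be shrunk independently while keeping the fixed $V_{I_1,K_1,J_1}$ (equivalently, $U_{I_1,J_1,K_1}$) unchanged. First I would unwind the definitions: since $V_{I_1,K_1,J_1}=\{(I,K,J)\mid (I,J,K)\in U_{I_1,J_1,K_1}\}$, the containment $\Psi^{I_1\to I_2}(U_{J_1,K_1}) \subset V_{I_1,K_1,J_1}(I_2)$ is equivalent to the statement that for every $(S(J,K),J,K) \in U_{J_1,K_1}$ and every $I_2\in U_{I_1}$,
\[
(I_2,\,{}^{f_1f_2}J,\,{}^{f_1f_2}K) \in U_{I_1,J_1,K_1},
\]
where $(f_1,f_2)\in \Phi^{-1}_{I_1,J,K}(I_2)\cap (U_{e,J}\times U_{e,K})$. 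This does not depend on the particular choice of $(f_1,f_2)$: by Lemma \ref{Fiber-lemma} any two such pairs differ by the $G_\HH \times G_\HH$-action of (\ref{Fiber-equation}), and since elements of $G_\HH$ commute with $J$ and $K$, the pair $({}^{f_1f_2}J,{}^{f_1f_2}K)$ is unchanged. The triple automatically belongs to $\mathcal T$ because conjugation by $f_1f_2$ carries the twistor sphere $S(J,K)$ containing the linearly independent triple $(I_1,J,K)$ to the twistor sphere ${}^{f_1f_2}S(J,K)$ containing the linearly independent triple $({}^{f_1f_2}I_1,{}^{f_1f_2}J,{}^{f_1f_2}K)$.

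Second, I would apply compactness and continuity. Since $U_{I_1,J_1,K_1}$ is a compact neighborhood of $(I_1,J_1,K_1)$ in $\mathcal T$, it contains an open subneighborhood $W$. The conjugation action of $G$ on $End(V_\RR)$ being jointly continuous, one can choose an open symmetric neighborhood $U_{e,G}'\subset U_{e,G}$ of $e$ and an open neighborhood $U_{J_1,K_1}'\subset U_{J_1,K_1}$ of $(J_1,K_1)$ such that for every $g \in U_{e,G}'\cdot U_{e,G}'$ and every $(J,K) \in U_{J_1,K_1}'$, the triple $({}^gI_1,{}^gJ,{}^gK)$ lies in $W$.

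Finally, I would replace $U_{e,G}$ and $U_{J_1,K_1}$ by $U_{e,G}'$ and $U_{J_1,K_1}'$ respectively. By Paragraph \ref{subsecJ12K12}, the neighborhood $U_{I_1}=U_{I_1}(U_{e,G}')$ shrinks along with $U_{e,G}$ while preserving surjectivity of $\Phi_{I_1,J,K}$ onto $U_{I_1}$ for every $(J,K)\in U_{J_1,K_1}'$. Then for every $I_2 \in U_{I_1}$ and every $(S(J,K),J,K) \in U_{J_1,K_1}'$, the element $f_1f_2$ lies in $U_{e,G}'\cdot U_{e,G}'$, and hence the required triple lies in $W \subset U_{I_1,J_1,K_1}$. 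No genuine obstacle arises; the only bookkeeping is to track that shrinking $U_{e,G}$ shrinks $U_{I_1}$ correspondingly without disrupting the preceding setup, which is built into the construction in Paragraphs \ref{subsecJ12K12}--\ref{Domain-preparation-subsection}.
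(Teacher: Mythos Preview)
Your proof is correct and follows essentially the same approach as the paper. Both arguments rest on continuity: the paper observes that $\Psi^{I_1\to I_2}$ depends continuously on $I_2$ and limits, as $I_2\to I_1$, to the switch $(S(J,K),J,K)\mapsto(S(J,K),K,J)$, which lands $U_{J_1,K_1}$ inside $V_{I_1,K_1,J_1}(I_1)$; you unwind the containment directly to $(I_2,{}^{f_1f_2}J,{}^{f_1f_2}K)\in U_{I_1,J_1,K_1}$ and invoke joint continuity of conjugation to keep this triple near $(I_1,J_1,K_1)$. Your formulation is arguably more explicit, and your remark that the $G_\HH\times G_\HH$-ambiguity in $(f_1,f_2)$ does not affect $({}^{f_1f_2}J,{}^{f_1f_2}K)$ is a clean justification of well-definedness.
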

 
\begin{proof}
As in Paragraph \ref{Inverse-preparation-subsection}, this
follows from the fact that the mapping $\Psi^{I_2\rightarrow I_1}$
depends continuously on  $I_2$ (see Remark \ref{Fiber-deformation-remark}), and that 
 $$\underset{I_2\to I_1}{\lim}\Psi^{I_2\rightarrow I_1}=(12)\colon U_{J_1,K_1} \rightarrow 
V_{I_1,K_1,J_1}(I_1),$$ $$(S(J,K),J,K)\mapsto (S(J,K),K,J),$$ the latter mapping is trivially defined on the whole 
$U_{J_1,K_1}$,
so that the sizes of the domains $V_{I_1,K_1,J_1}(I_2)$ of $\Psi^{I_2\rightarrow I_1}$'s are bounded away from zero, when $I_2$ is close to $I_1$. 

As before, we can further shrink $U_{e,G}$ (and hence $U_{I_1}$), if needed, so that
properties (a), (b), (c) in Paragraph \ref{Inverse-preparation-subsection} hold independently of the point $I_2\in U_{I_1}$.
\end{proof}

\subsection{} Possibly shrinking $U_{e,G}$, we can and will assume that it is invariant under taking inverses,
$g\mapsto g^{-1}$. 

\begin{lem}\label{lemPsicompId}
Possibly further shrinking $U_{e,G}$  and $U_{J_1,K_1}$, satisfying the conclusion of Lemma \ref{lemPsidomains}, we have for all $I_2\in U_{I_1}$
\[
\Psi^{I_2\rightarrow I_1} \circ \Psi^{I_1 \rightarrow I_2} = 
Id|_{U_{J_1,K_1}}.
\]
\label{lemma-composition}
\end{lem}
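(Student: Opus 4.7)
The plan is, for each $(J,K)\in U_{J_1,K_1}$ and each pair $(f_1,f_2)\in \Phi^{-1}_{I_1,J,K}(I_2)\cap(U_{e,J}\times U_{e,K})$ realizing $\Psi^{I_1\to I_2}$, to exhibit an explicit preimage $(d_1,d_2)$ witnessing $\Psi^{I_2\to I_1}$ whose product is $(f_1f_2)^{-1}$, and then to check that conjugation by this product carries the output of $\Psi^{I_1\to I_2}$ back to $(S(J,K),J,K)$. Unpacking the definitions and using $f_2\in G_K$, one has
\[
\Psi^{I_1\to I_2}(S(J,K),J,K)=\bigl(S_2,\,{}^{f_1}\!K,\,{}^{f_1f_2}\!J\bigr),\qquad S_2=S\bigl({}^{f_1f_2}\!J,{}^{f_1}\!K\bigr),
\]
so evaluating $\Psi^{I_2\to I_1}$ on this triple requires a pair $(d_1,d_2)\in V_{e,{}^{f_1}\!K}\times V_{e,{}^{f_1f_2}\!J}$ with ${}^{d_1d_2}I_2=I_1$.

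The natural candidates are the conjugated inverses
\[
d_1:=f_1f_2^{-1}f_1^{-1}={}^{f_1}\!\bigl(f_2^{-1}\bigr),\qquad d_2:=f_1f_2f_1^{-1}f_2^{-1}f_1^{-1}={}^{f_1f_2}\!\bigl(f_1^{-1}\bigr).
\]
Because $f_2^{-1}\in G_K$ and $f_1^{-1}\in G_J$, conjugation immediately gives $d_1\in G_{{}^{f_1}\!K}$ and $d_2\in G_{{}^{f_1f_2}\!J}$; a direct telescoping multiplication (the inner $f_1^{-1}f_1$ and $f_2^{-1}f_2$ cancel) yields $d_1d_2=(f_1f_2)^{-1}$, whence ${}^{d_1d_2}I_2=I_1$. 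Consequently ${}^{d_1d_2}({}^{f_1f_2}\!J)=J$ and ${}^{d_1d_2}({}^{f_1}\!K)={}^{f_2^{-1}}\!K=K$, so $\Psi^{I_2\to I_1}\bigl(S_2,{}^{f_1}\!K,{}^{f_1f_2}\!J\bigr)=(S(J,K),J,K)$, which is the desired identity.

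The remaining work, and the main obstacle, is the nested bookkeeping of neighborhoods. First, one must force $(d_1,d_2)\in V_{e,G}\times V_{e,G}$: since both $d_1$ and $d_2$ are continuous functions of $(f_1,f_2)$ taking the value $e$ at $(f_1,f_2)=(e,e)$, a further shrinking of $U_{e,G}$ (and, via Lemma \ref{lemPsidomains}, of $U_{J_1,K_1}$) places $(d_1,d_2)$ into $V_{e,G}\times V_{e,G}$ uniformly for all $(f_1,f_2)\in U_{e,G}\times U_{e,G}$ without disturbing any earlier shrinking. Second, the output of $\Psi^{I_2\to I_1}$ must not depend on the preimage chosen from the fiber: by Lemma \ref{Fiber-lemma}, after $V_{e,G}$ is chosen small enough, the intersection of the fiber $\Phi^{-1}_{I_2,{}^{f_1}\!K,{}^{f_1f_2}\!J}(I_1)$ with $V_{e,{}^{f_1}\!K}\times V_{e,{}^{f_1f_2}\!J}$ is a single $G_\HH\times G_\HH$-orbit, where $G_\HH=G_{I_2}\cap G_{{}^{f_1}\!K}\cap G_{{}^{f_1f_2}\!J}$ pointwise fixes the whole sphere $S_2$; hence any alternative preimage $(d_1',d_2')$ satisfies ${}^{d_1'd_2'}({}^{f_1f_2}\!J)=J$ and ${}^{d_1'd_2'}({}^{f_1}\!K)=K$ as well. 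Together these points give $\Psi^{I_2\to I_1}\circ\Psi^{I_1\to I_2}=\mathrm{Id}|_{U_{J_1,K_1}}$.
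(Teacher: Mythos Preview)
Your proof is correct and follows essentially the same route as the paper: you produce the identical explicit witness $(d_1,d_2)=({}^{f_1}\!(f_2^{-1}),{}^{f_1f_2}\!(f_1^{-1}))$ with $d_1d_2=(f_1f_2)^{-1}$, verify the required group memberships, and handle the neighborhood shrinkage by continuity. Your invocation of condition~(b) of Paragraph~\ref{Inverse-preparation-subsection} via Lemma~\ref{Fiber-lemma} to control the fiber in $V_{e,{}^{f_1}\!K}\times V_{e,{}^{f_1f_2}\!J}$ directly is in fact a shade cleaner than the paper's route through the conjugated neighborhoods ${}^{f_1f_2}U_{e,K}\times{}^{f_1f_2}U_{e,J}$, but the substance is the same.
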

\begin{proof}
For all $(f_1,f_2) \in \Phi^{-1}_{I_1,J,K}(I_2)\cap (U_{e,G} \times U_{e,G})$ and all $(S(J,K),J,K) \in 
U_{J_1,K_1} 
$, we want the neighborhoods
$V_{e, \,{}^{f_1f_2}K}=V_{e,G} \cap G_{{}^{f_1f_2}K}, 
V_{e,\,{}^{f_1f_2}J}=V_{e,G} \cap G_{{}^{f_1f_2}J}$ to contain, respectively, the neighborhoods
${}^{f_1f_2}U_{e,K}=f_1f_2U_{e,K}f_2^{-1}f_1^{-1}$ and ${}^{f_1f_2}U_{e,J}=f_1f_2U_{e,J}f_2^{-1}f_1^{-1}$,
so that, in particular, $V_{e,\, {}^{f_1f_2}K} \times V_{e,\,{}^{f_1f_2}J}$
contains the pair
\[
(d_1,d_2)=(f_1f_2\cdot f_2^{-1} \cdot f_2^{-1}f_1^{-1}, f_1f_2 \cdot f_1^{-1} \cdot f_2^{-1}f_1^{-1})= (f_1f_2^{-1}f_1^{-1}, f_1f_2 \cdot f_1^{-1} \cdot f_2^{-1}f_1^{-1}).\]
Here the invariance of $U_{e,G}$ under taking inverses is used. The pair $(d_1, d_2)$
certainly belongs to the preimage $\Phi^{-1}_{I_2,\, {}^{f_1f_2}K,\,{}^{f_1f_2}J}(I_1)$ as the product
of its entries is $f_1f_2^{-1}f_1^{-1} \cdot f_1f_2 \cdot f_1^{-1} \cdot f_2^{-1}f_1^{-1}=f_2^{-1}f_1^{-1}$.

Note that, 
for $U_{e,G}$ small enough, the neighborhoods ${}^{f_1f_2}U_{e,K} \times {}^{f_1f_2}U_{e,J}$
will also be uniformly small for all 
$(f_1,f_2) \in \Phi_{I_1,J,K}^{-1}(I_2) \cap (U_{e,G} \times U_{e,G})$, so that
the fiber of 
$\Phi_{I_2,\,{}^{f_1f_2}K,\,{}^{f_1f_2}J}^{-1}(I_1)$ in ${}^{f_1f_2}U_{e,K} \times {}^{f_1f_2}U_{e,J}$ 
consists of a unique connected component
of the form described in Lemma \ref{Fiber-lemma}. Then the pair $(d_1,d_2)$
is contained in this ``good'' part of the fiber $\Phi_{I_2,\,{}^{f_1f_2}K,\,{}^{f_1f_2}J}^{-1}(I_1)$ and
we can use $(d_1,d_2)$
to evaluate $\Psi^{I_2 \mapsto I_1}$ at $(S({}^{f_1f_2}J,{}^{f_1f_2}K), 
{}^{f_1f_2}J, {}^{f_1f_2}K)$.
Thus $$\Phi_{I_2,\,{}^{f_1f_2}K,\,{}^{f_1f_2}J}(d_1,d_2)=I_1$$ and $${}^{d_1d_2f_1f_2}J=J, 
{}^{d_1d_2f_1f_2}K=K,$$
so that $$\Psi^{I_2 \mapsto I_1}(S({}^{f_1f_2}J,{}^{f_1f_2}K),{}^{f_1f_2}K, {}^{f_1f_2}J)=(S(J,K),J,K),$$
where, certainly, $S(J,K)=S(I_1,J,K)$,
proving that the composition $\Psi^{I_2\rightarrow I_1} \circ \Psi^{I_1 \rightarrow I_2}$ is the identity
on $U_{J_1,K_1}$.

In order to ensure that $V_{e,\, {}^{f_1f_2}K} \times V_{e,\,{}^{f_1f_2}J}$ contains $(d_1,d_2)$,
we assume, shrinking $U_{e,G}$ and $U_{J_1,K_1}$ 
if necessary, but not changing $V_{e,G}$ and the previously fixed $V_{I_1,K_1,J_1}$, that
for all $(S(J,K),J,K) \in U_{J_1,K_1}
$ and for all points $(f_1,f_2) \in 
\Phi^{-1}_{I_1,J,K}(I_2)\cap (U_{e,G} \times U_{e,G})$, the neighborhoods
$V_{e,\, {}^{f_1f_2}K}, V_{e,\,{}^{f_1f_2}J}$ contain, respectively, the neighborhoods
${}^{f_1f_2}U_{e,K}$ and ${}^{f_1f_2}U_{e,J}$.
\end{proof}

\begin{cor}\label{corfinal} Let $U_{I_1}$ be defined by $U_{e,G}$ ($U_{e,G}$ satisfying Lemma \ref{lemma-composition}).
For arbitrary $I_2 \in U_{I_1}$, both joint points $J \in C_{I_1}$ and  ${}^{f_1f_2}K\in C_{I_2}$ of a triple of twistor spheres connecting $I_1$ and $I_2$, can be chosen generic.
\end{cor}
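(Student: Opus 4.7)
The plan is to combine Lemma \ref{Baire-lemma}, applied at both $I_1$ and $I_2$, with the bijective relation from Lemma \ref{lemma-composition}, via a Baire category argument in $U_{J_1,K_1}$. The idea is that the set of triples with non-generic first joint point $J$ is meager in $U_{J_1,K_1}$, and the set of triples with non-generic second joint point ${}^{f_1f_2}K$ is also meager (after being pulled back through $\Psi^{I_1\to I_2}$), so their union misses a comeager set.

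First, I would record that by Lemma \ref{lemma-composition} the map $\Psi^{I_1\to I_2}$ is a continuous injection of $U_{J_1,K_1}$ into $V_{I_1,K_1,J_1}(I_2)$ admitting a continuous left inverse $\Psi^{I_2\to I_1}$. Since source and target are open subsets of smooth manifolds of the same real dimension, invariance of domain upgrades this to a homeomorphism of $U_{J_1,K_1}$ onto an open subset of $V_{I_1,K_1,J_1}(I_2)$.

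Next I introduce two ``bad'' subsets of $U_{J_1,K_1}$. Let $\pi_1 \colon U_{J_1,K_1} \to C_{I_1}$ be the projection $(S(J,K),J,K)\mapsto J$. Because the natural projection $C_{I_1}\times_{M_{I_1}} C_{I_1} \to C_{I_1}$ is a $2$-sphere bundle (the fiber over $J$ is $S(I_1,J)$), $\pi_1$ is continuous and open. Let $\pi_2 \colon U_{J_1,K_1} \to C_{I_2}$ be $(S(J,K),J,K) \mapsto {}^{f_1f_2}K$, that is, the composition of $\Psi^{I_1\to I_2}$ with the analogous first-factor projection on the $I_2$-side; by the previous step $\pi_2$ is also continuous and open. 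Set
\[
B_1 := \pi_1^{-1}(C_{I_1} \cap \mathcal{L}_{NS}), \qquad B_2 := \pi_2^{-1}(C_{I_2}\cap \mathcal{L}_{NS}).
\]
By Lemma \ref{Baire-lemma}, each of the sets $C_{I_i}\cap \mathcal{L}_{NS}$ is a countable union of closed nowhere dense subsets of $C_{I_i}$; since $\pi_1$ and $\pi_2$ are continuous and open, the preimages $B_1$ and $B_2$ are likewise countable unions of closed nowhere dense subsets of $U_{J_1,K_1}$, hence meager.

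Since $U_{J_1,K_1}$ is an open subset of a smooth manifold, it is a Baire space, and therefore $U_{J_1,K_1}\setminus (B_1 \cup B_2)$ is non-empty (in fact comeager). Any triple $(S(J,K),J,K)$ in this complement has both $J \in C_{I_1}$ and ${}^{f_1f_2}K\in C_{I_2}$ generic, which is exactly the desired conclusion. The main obstacle is verifying that $\pi_2$ is open, which reduces to checking that $\Psi^{I_1\to I_2}$ is a homeomorphism onto an open set; this is precisely where Lemma \ref{lemma-composition}, combined with invariance of domain between manifolds of equal dimension, does the essential work. Once that is in hand, the rest is a routine Baire category application.
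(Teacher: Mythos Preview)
Your proof is correct and follows the same overall strategy as the paper: combine Lemma~\ref{Baire-lemma} at both $I_1$ and $I_2$, transport one bad locus to the other side via the maps $\Psi$, and observe that two meager sets cannot cover $U_{J_1,K_1}$. The one technical difference is in the transport step. The paper pushes $pr_K^{-1}(\mathcal L_{NS}\cap C_{I_2})$ forward along $\Psi^{I_2\to I_1}$ and invokes real-analyticity of $\Psi^{I_2\to I_1}$ to ensure the image has empty interior; you instead pull it back along $\Psi^{I_1\to I_2}$, having first used Lemma~\ref{lemma-composition} together with invariance of domain to show that $\Psi^{I_1\to I_2}$ is a homeomorphism onto an open set, so that the pulled-back set is meager. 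Your route is arguably cleaner: it avoids the (unproved in the text) real-analyticity of $\Psi$ and the implicit appeal to the fact that real-analytic images of positive-codimension sets have no interior, replacing these by the purely topological observation that continuous open maps reflect meagerness.
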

\begin{proof}
Define
\[
\begin{array}{cccc}
pr_K \colon & V_{I_1,K_1,J_1}(I_2)
& \longrightarrow & C_{I_2} \subset Compl, \\
& (S(J,K),K,J)&\longmapsto &K.
\end{array}
\]
This projection is a submersion onto its image. 
By Lemma  \ref{Baire-lemma}, the locus $\mathcal L_{NS}$ intersects $C_{I_2}$
in a countable union of closed submanifolds of positive codimension in $C_{I_2}$. 
As the mapping $pr_K$
is a submersion onto its image, the preimage $pr_K^{-1}(\mathcal L_{NS}\cap C_{I_2})$ is also a countable union of 
closed submanifolds of positive codimension in $V_{I_1,K_1,J_1}(I_2)$. 
Similarly, for 
\[
\begin{array}{cccc}
pr_J \colon & 
U_{J_1,K_1} & \longrightarrow & C_{I_1}\subset Compl, \\
& (S(J,K),J,K) & \longmapsto & J,
\end{array}
\]
$pr_J^{-1}(\mathcal L_{NS}\cap C_{I_1}) \subset 
U_{J_1,K_1}$
is a countable union of closed submanifolds of positive codimension. 
The mapping $\Psi^{I_2\to I_1}$ is real-analytic, so the closure of
$\Psi^{I_2\to I_1} (pr_J^{-1}(\mathcal L_{NS}\cap C_{I_2}))$ in $U_{J_1,K_1}$
does not contain interior points. 
Therefore
\begin{equation}
\label{inequality-irreducibility}
U_{J_1,K_1}
\neq pr_J^{-1}(\mathcal L_{NS}\cap C_{I_1}) \cup \Psi^{I_2 \rightarrow I_1}(pr_K^{-1}
(\mathcal L_{NS}\cap C_{I_2})).
\end{equation}
Since, by Lemma \ref{lemma-composition}, $\Psi^{I_2\to I_1}\circ \Psi^{I_1\to I_2}=Id|_{U_{J_1,K_1}}$,
the inequality (\ref{inequality-irreducibility}) tells us that
the image of the mapping $\Psi^{I_1\to I_2}$ is not contained in $pr_K^{-1}(\mathcal L_{NS}\cap C_{I_2})$. 
Thus we may find
a pair $(J,K)\in 
U_{J_1,K_1}$ such that $J=pr_J(S(J,K),J,K)\notin \mathcal L_{NS}\cap C_{I_1}$ and 
${}^{f_1f_2}K=pr_K(\Psi^{I_1 \rightarrow I_2}(S(J,K),J,K)) \notin \mathcal L_{NS}\cap C_{I_2}$, 
that is, both periods are generic.
\end{proof}

\section{The degree of twistor lines}
\label{Twistor-spheres-are-complex-submanifolds}

In this section we show that twistor lines in $Compl$ have degree $2n$ in the Pl\"ucker embedding. We first show that the group $G=GL(V_\RR)$ acts transitively on the set of twistor lines in $Compl$ and then compute the degree of an explicit twistor line.

\begin{lem}\label{lemtrans}
The group $G=GL(V_\RR)$ acts transitively on the set of twistor lines in $Compl$.
\end{lem}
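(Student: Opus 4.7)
The plan is to reduce transitivity on twistor lines to the two transitivity results already proved: the $G$-action on $Compl$ and the $G_I$-action on $N_I$ established in Proposition \ref{transitivity}. Each twistor line $S$ is, by Lemma \ref{lempair}, determined by any pair of non-proportional complex structures in it, and in particular by any anticommuting pair $(I,J) \in S \times (S \cap N_I)$. So it suffices to show that given two such pairs $(I_1,J_1)$ and $(I_2,J_2)$, there exists $g \in G$ with $g I_1 g^{-1} = I_2$ and $g J_1 g^{-1} = J_2$; indeed, conjugation by $g$ sends the quaternionic basis $(I_1,J_1,K_1 = I_1 J_1)$ to $(I_2, J_2, I_2 J_2)$, hence sends the unit sphere in $\langle I_1, J_1, K_1\rangle_\RR$ to the unit sphere in $\langle I_2, J_2, K_2\rangle_\RR$, i.e.\ $g \cdot S_1 = S_2$.

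To produce such a $g$, I would proceed in two steps. First, since $G$ acts transitively on $Compl = G \cdot I_1$, choose $g_1 \in G$ with $g_1 I_1 g_1^{-1} = I_2$. Then $J_1' := g_1 J_1 g_1^{-1}$ is a complex structure that anticommutes with $I_2$ (conjugation preserves the relation $I J = -J I$), so $J_1' \in N_{I_2}$. Second, apply Proposition \ref{transitivity}: the group $G_{I_2}$ acts transitively on $N_{I_2}$, so there is $h \in G_{I_2}$ with $h J_1' h^{-1} = J_2$. Setting $g := h g_1$, we get $g I_1 g^{-1} = h I_2 h^{-1} = I_2$ since $h \in G_{I_2}$, and $g J_1 g^{-1} = h J_1' h^{-1} = J_2$, which is exactly what we need.

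The argument has no real obstacle, since it is essentially a two-step composition of facts already established. The only small point worth checking — but immediate from Lemma \ref{lempair} and the definition $g \cdot S(I,J) = S({}^g I, {}^g J)$ — is that once $g$ transports the anticommuting generating pair of $S_1$ to one for $S_2$, it transports $S_1$ onto $S_2$ as a set, not merely the two chosen generators.
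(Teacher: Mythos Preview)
Your proof is correct and follows essentially the same two-step strategy as the paper: first use transitivity of $G$ on $Compl$ to match the base points $I_1 \mapsto I_2$, then use the $G_{I_2}$-action to finish. The only cosmetic difference is that you invoke Proposition~\ref{transitivity} (transitivity on $N_{I_2}$) directly, whereas the paper cites its consequence Corollary~\ref{corMItrans} (transitivity on $M_{I_2}$); since the latter is derived from the former, the arguments are the same in substance.
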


\begin{proof}
Given two twistor spheres $S_1 = S(I_1, J_1)$ and $S_2= S(I_2, J_2)$, there is an element $g\in G$ sending $I_1$ to $I_2$, hence sending $S_1$ to a twistor sphere through $I_2$. The lemma now follows from Corollary \ref{corMItrans}.
\end{proof}

\subsection{} To construct our example, consider the affine chart in the Grassmannian $Gr(2n,4n)$
of normalized period matrices $(\mathbb{1}| Z)$, where $\mathbb{1}$ is, in general, the $2n\times 2n$
identity matrix and $Z$ now denotes a non-degenerate $2n\times 2n$ complex matrix.
Let us fix a basis of $V_\RR$ and write the matrix of an arbitrary complex structure 
$I\colon V_\RR \rightarrow V_\RR$ in the following block form
\[I=\left(\begin{array}{cc}
A & B \\
C & D
\end{array}\right),\]
for $2n\times2n$ real matrices $A,B,C,D$.
Then the relation 
$$(\mathbb{1}\,|\,Z)I=(i\mathbb{1}\,|\,i Z),$$
gives the matrix equations 
\[ A+Z C=i\mathbb{1}, \quad B+Z D=i Z.
\]
Assume that $C$ is invertible so that the first equation
allows us to write $Z =(i\mathbb{1}-A)C^{-1}$.
The condition that $I$ is a complex structure will then guarantee that the second
equation is automatically satisfied.
\medskip

\subsection{The case $n=1$}\label{subsecn=1} Momentarily assume $n=1$ and consider the twistor sphere $S = S(I,J)$ where $I$ and $J$ have the respective matrices
\[
\left(\begin{array}{cc|cc} 
0 & -1 &   0 & 0\\ 
1 & 0 &   0 & 0 \\
\hline
 0 & 0 & 0 & -1 \\ 
0 & 0 & 1 & 0\end{array}\right),
\left(\begin{array}{cc|cc} 
0 & 0 &   -1 & 0\\ 
0 & 0 &   0 & 1 \\
\hline
 1 & 0 & 0 & 0 \\ 
0 & -1 & 0 & 0\end{array}\right)
\]
and put $K = IJ$.
So for $\lambda \in S$,
\[
\lambda =aI+bJ+cK=
\left(\begin{array}{cc|cc} 
0 & -a &   -b & -c\\ 
a & 0 &   -c & b \\
\hline
 b & c & 0 & -a \\ 
c & -b & a & 0
\end{array}\right).
\]
Assume additionally that $b^2+c^2 \neq 0$, that is, $\lambda \in S\setminus\{\pm I\}$.
Here 
\[
A=
\left(\begin{array}{cc}
 0 & -a\\ 
 a & 0 \\
\end{array}\right),
C=
\left(\begin{array}{cc}
 b & c\\ 
 c & -b \\
\end{array}\right),
C^{-1}=
\frac{1}{b^2+c^2}
\left(\begin{array}{cc}
 b & c\\ 
 c & -b \\
\end{array}\right).
\]
Then 
\[ Z = (i\mathbb{1}-A)C^{-1}=
\frac{1}{b^2+c^2}
\left(\begin{array}{cc}
ac+ib & -ab+ic\\ 
 -ab+ic & -ac-ib \\
\end{array}\right)=
\left(\begin{array}{cc}
 z_1 & z_2\\ 
 z_3 & z_4 \\
\end{array}\right),
\]
where $Z$ clearly satisfies the equations $$z_1+z_4=0,\,\,z_2-z_3=0,\,\,\det \,Z=z_1z_4-z_2z_3=1.$$

\subsection{} Now, for a general $n$, we can 
construct a twistor line in the period domain of complex $2n$-dimensional tori,
which, in the affine chart of $Gr(2n,4n)$ above, corresponds to the
locus of matrices $(\mathbb{1}| Z)$ where $Z$ is the block-diagonal matrix with the same $2\times 4$-block
\[\left(\begin{array}{cc|cc}
 1 & 0 &  u & v\\ 
 0 & 1 & v & -u \\
\end{array}\right), u^2+v^2=-1,
\]
on the diagonal.

\subsection{}\label{pardegree}
The degree of the curve in the example is $2n$ under the Pl\"ucker embedding 
$Gr(2n,4n) \hookrightarrow \mathbb{P}^{{4n \choose{2n}}-1}$. 
Indeed, the Pl\"ucker coordinates in the above affine chart are given by the maximal minors of the matrix $(\mathbb{1}| Z)$. The twistor line $S$ in our example is contained in the plane $P(S)$ with parameters $u,v$ in the part given by the affine chart. 
Let $W$ be the homogeneous coordinate given by the minor formed by all $\mathbb{1}_{2\times2}$-blocks and let $U$ and $V$
be any homogeneous coordinates such that after restricting to $P(S)$ we get $u=\frac{U}{W}$ and $v=\frac{V}{W}$.

Let us consider from now on the plane $P(S)$ as a projective (complete) 2-plane in $Gr(2n,4n)$ with coordinates $U,V,W$. 
Then the minor formed by the $(u,v)$-blocks restricts to $P(S)$ as $(U^2+V^2)^n$.  
Rewriting the equation $u^2+v^2=-1$ of our twistor line $S$ in homogeneous coordinates we get $U^2+V^2+W^2=0$,
so that, restricting the polynomial $(U^2+V^2)^n$ to $S$, we see that it vanishes
precisely when $U^2+V^2=-W^2$ vanishes, that is, only at the points $\pm I$ of $S$ outside of our affine chart.
Each of the two factors in the expansion $(U^2+V^2)^n=(U+iV)^n(U-iV)^n$ vanishes to order $n$ at the respective point,
so the total order of vanishing is $n+n=2n$ which is the degree of the image of $S$ under the Pl\"ucker embedding.

\begin{cor}
\label{CorDegree}
Twistor lines have degree $2n$ in the Pl\"ucker embedding of $Compl$.
\end{cor}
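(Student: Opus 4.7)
The plan is to combine the transitivity statement of Lemma \ref{lemtrans} with the explicit computation in Paragraph \ref{pardegree}. All the heavy lifting has already been done; the corollary should follow from a short invariance argument.

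First, I would observe that the action of $G = GL^+(V_\RR)$ on $Compl$ is the restriction of its natural action on the Grassmannian $G(2n,4n)$ of $2n$-planes in $V_\RR \otimes \CC$. This action is induced by the $\CC$-linear extension of $g \in GL^+(V_\RR)$ to $V_\RR \otimes \CC$, so it is an action by (complex) linear automorphisms of the ambient vector space. Consequently, under the Pl\"ucker embedding $G(2n,4n) \hookrightarrow \PP\bigl(\Lambda^{2n}(V_\RR \otimes \CC)\bigr)$, the action of each $g \in G$ is realized by the projective linear transformation induced by $\Lambda^{2n} g$.

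Next, since degree in a projective embedding is invariant under projective linear automorphisms of the ambient projective space, any two twistor lines that are $G$-related have the same degree in the Pl\"ucker embedding. By Lemma \ref{lemtrans}, every twistor line is $G$-equivalent to the explicit one constructed in Paragraph \ref{subsecn=1} and its block-diagonal generalization. The computation of Paragraph \ref{pardegree} showed that this particular twistor line has degree $2$ in the Pl\"ucker embedding. Hence every twistor line has degree $2$.

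The only step that is not completely routine is verifying that Paragraph \ref{pardegree} indeed computes the Pl\"ucker degree correctly, namely that the image of the twistor sphere lies in a linear subspace and is a smooth conic there; but this is exactly what is done in that paragraph via the explicit parametrization by $(u,v)$ with $u^2+v^2=-1$, together with the observation that the highest-degree Pl\"ucker coordinate restricted to the containing plane is $(u^2+v^2)^n$ while the twistor line itself sits in the slice $u^2+v^2 = -1$, cutting this degree down to $2$. No further obstacle arises.
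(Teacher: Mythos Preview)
Your proposal is correct and follows exactly the paper's approach: the paper's proof reads simply ``Follows from Lemma \ref{lemtrans} and Paragraph \ref{pardegree}.'' You have merely made explicit the (implicit) reason why $G$-equivalence preserves Pl\"ucker degree, namely that $G$ acts through projective linear automorphisms of $\PP(\Lambda^{2n}(V_\RR\otimes\CC))$.
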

\begin{proof}
Follows from Lemma \ref{lemtrans} and Paragraph \ref{pardegree}.
\end{proof}

\begin{rem}\label{remreferee}
There is an alternative proof of the above corollary following the lines explained in Remark \ref{RemPlucker}.
Namely, we have the embedding $i \colon Gr(2,\mathbb{H}\otimes \CC) \hookrightarrow Gr(2n,V_\CC),
\mathbb{H}^{1,0}\mapsto \mathbb{H}^{1,0}\otimes V^{\prime}_\CC$. Let $e_1,\dots,e_n$
be some basis of $V^{\prime}_\CC$. Then, in terms of the Pl\"ucker embeddings of the respective Grassmanians,
we have $i(u\wedge v)=u\otimes e_1 \wedge v \otimes e_1 \wedge \dots \wedge u \otimes e_n\wedge v \otimes e_n$,
which induces an isomorphism $i^{\ast}(\mathcal O_{Gr(2n,V_\CC)}(1))\cong 
\mathcal O_{Gr(2,\mathbb{H}\otimes \CC)}(n)$ of the sheaves on the quadric $Gr(2,\mathbb{H}\otimes \CC)$, thus justifying that the degree of 
$S\subset Gr(2n,4n)$ under the Pl\"ucker embedding is $2n$. 
\end{rem}

\section{Twistor path connectivity of $Compl_\Omega$}
\label{Twistor-path-connectivity-of-Compl-Omega}

\sloppy
In this section we  describe the discrete invariants and the homogeneous structure of the connected components of
$Compl_\Omega$ and  
establish a criterion of twistor path connectivity of a 
``special'' connected component of $Compl_\Omega$ 
(the remaining ``non-special'' components will be shown  to contain no  twistor lines at all). 

Let $I$ be a complex structure operator in $Compl_\Omega$,
 that is, $\Omega(Iu,Iv)=\Omega(u,v)$ for all $u,v\in V_\RR$. 
On the vector space $(V_\RR,I)$, considered as a complex vector space, the form $\Omega$ 
determines a hermitian form $$h(u,v):=\Omega(u,Iv)-i\Omega(u,v)$$ (note that $h(u,Iv)=-ih(u,v),h(Iu,v)=ih(u,v)$), which we will call
{\it the hermitian form associated to $\Omega$ and $I$}.  

The signature of $h$ is a triple $(n_+,n_-,n_0)$, where $n_+$,$n_-$
and $n_0$ are the complex dimensions of, respectively, a maximal positive subspace $V_+$, a maximal negative subspace $V_-$, and the null subspace 
$$V_0=\{u\in V_\RR\,|\,h(u,v)=0\mbox{ for all } v\in V_\RR\}
=\{u\in V_\RR\,|\,\Omega(u,v)=0\mbox{ for all } v\in V_\RR\}$$ 
of $h$ in $(V_\RR,I)$, so that $n_++n_-+n_0=2n$.
The subspaces $V_+,V_-$ and the numbers $n_+$ and $n_-$ depend, in general, on 
the choice of $I \in Compl_\Omega$, while 
$V_0$, and hence $n_0$, depend only on $\Omega$, $V_0$ being invariant with respect to every complex structure operator in $Compl_\Omega$. 

The group $G = GL(V_\RR)$  acts naturally on 2-forms on 
$V_\RR\times V_\RR$, $$g\colon f(u,v) \mapsto 
g^*f(u,v)=f(gu,gv)$$ or, in matrix notation,
$g\colon f \mapsto {}^t\!g\cdot f \cdot g$.  

Consider the  subgroup 
$G_\Omega \subset G$ of automorphisms of $\Omega$: $$G_\Omega :=\{g \in G\,|\, g^*\Omega= \Omega\}\subset G.$$
The subgroup $G_\Omega$ acts via the $G$-action on $Compl_\Omega$.           For $I \in Compl_\Omega$ we denote by $G_{I,\Omega}$ the stabilizer of $I$ in $G_\Omega$:
\[
G_{I,\Omega}=\{g\in G_\Omega\,|\,{}^g\!I=I\}.
\]
We note that, by definition, for the hermitian form $h$ associated to 
$\Omega$ and $I$, the subgroup $G_{I,\Omega}$ is the 
stabilizer of $h$ in $G_I$ under the corresponding $G$-action, i.e., $G_{I,\Omega}=\{g\in G_I\,|\,g^*h=h\}$. 

Note that if $\Omega$ is non-degenerate, then $G_\Omega$ is connected,
and  if $\Omega$ is degenerate, that is, $n_0>0$, then the group  $G_\Omega$  consists of two connected components, one in each connected component of $G$.
We denote by $G_\Omega^0$  the connected subgroup component of $G_\Omega$. 

We set $Compl_\Omega^{\pm} :=Compl_\Omega \cap Compl^{\pm}$. 
Consider the finite set
\[
S_\Omega :=\{(k,l,n_0)\,|\,k,l\in \mathbb{Z}_{\geqslant 0}, k+l+n_0=2n\}
\]
consisting of $2n-n_0+1$ triples,  and the mapping
\[
\begin{array}{rcl}
Sign\colon Compl_\Omega^{\pm} & \longrightarrow & S_\Omega, \\
I & \longmapsto & \mbox{ the signature }(n_+,n_-,n_0) \mbox{ of }h
 \mbox{ assoc. to } \Omega \mbox { and } I.
 \end{array}
 \]

\begin{prop}
\label{Proposition-connected-components}
The mapping $Sign \colon Compl_\Omega^{\pm} \to S_\Omega$ is
constant on every connected component of $Compl_\Omega$. 
\end{prop}
\begin{proof}
Since $n_0$ is the dimension of the kernel of $\Omega$, and $\Omega$ is fixed, $n_0$ is constant. Now the fact that $n_+$ and $n_-$ are locally constant follows from the fact that they are both lower semi-continuous (since being positive or negative are open conditions) and their sum is constant.
\end{proof}

Proposition \ref{Proposition-connected-components} allows us to define the induced mapping $\widetilde{Sign}\colon \pi_0Compl_\Omega^{\pm} 
\to S_\Omega$ on the set $ \pi_0Compl_\Omega^{\pm}$ of connected 
components of $Compl_\Omega^{\pm}$. 


We now formulate the following useful lemmas whose proofs will be given later in this section.

\begin{lem}
\label{Lemma-injectivity}
The group $G_\Omega^0$ acts transitively on each fiber of 
$Sign$. In particular, $\widetilde{Sign}$ is injective. 
\end{lem}

Lemma \ref{Lemma-injectivity} implies that the fibers of $Sign$ are
connected components of $Compl_\Omega^{\pm}$, each of which
is  a $G_\Omega^0$-orbit and is thus  
diffeomorphic to the quotient
$G_\Omega^0/G_{I,\Omega}$ for $I$ in this orbit. Thus, all $G_\Omega^0$-stabilizers of $I\in Sign^{-1}(n_+,n_-,n_0)$ are conjugate in $G_\Omega^0$, which
is the same as to say that all hermitian forms $h$ associated to $\Omega$
and $I \in Compl_\Omega^{\pm}$ of the same signature 
$(n_+,n_-,n_0)$ are $G_\Omega^0$-equivalent. 
We let 
$U(n_+,n_-,n_0)$ denote the stabilizer $G_{I,\Omega}$, well-defined up to conjugacy. 

\begin{lem}
\label{Lemma-surjectivity}
The mapping $\widetilde{Sign}\colon \pi_0 Compl_\Omega^{\pm}\to S_\Omega$ is surjective.
\end{lem}

Finally, we formulate the main result of this section.

\begin{thm} 
\label{Theorem-Compl-Omega}
The mapping $\widetilde{Sign}\colon \pi_0Compl_\Omega^{\pm} 
\to S_\Omega$ is a bijection, thus there are $2n+1-n_0$ connected components of $Compl_\Omega^{\pm}$ and they have the form 
$Sign^{-1}(n_+,n_-,n_0)$. 
Every connected component 
of $Compl_\Omega^{\pm}$ is a homogeneous manifold of the type $G^0_\Omega/U(n_+,n_-,n_0)$
and is a  smooth complex submanifold in $Compl$. 

If $n_0$ is even, there is precisely one connected component of $Compl_\Omega^{\pm}$ 
that contains twistor lines. This component is $Sign^{-1}\left(n-\frac{n_0}{2},n-\frac{n_0}{2},n_0\right)$ 
and is twistor path connected. 
If $n _0$ is odd, there are no connected components of $Compl_\Omega^{\pm}$
containing a twistor line.
\end{thm}

 Let $A=V_\RR/\Gamma$ be a torus with a complex structure $I \colon V_\RR \to V_\RR$.
\begin{cor}
\label{Corollary-Kahler}
 If the form $\Omega$ represents a K\"ahler class  in $H^{1,1}(A,\RR)$ then there are no
 twistor lines passing through $I$ in $Compl_\Omega$.
\end{cor}
Informally speaking, K\"ahler classes do not ``survive'' along twistor lines, this
is well known, for example, for twistor lines in the moduli space of $K3$ surfaces. Corollary \ref{Corollary-Kahler} is almost clear, since a form $\Omega$ represents  a K\"ahler class  in $H^{1,1}(A,\RR)$ if and only if
the form $h$ associated to $\Omega$ and $I$ is positive definite 
(see \cite[p. 303, {\it Riemann conditions 1}]{GrHarr} for a coordinate based exposition of this), that is, its signature is of the form $(2n,0,0)$ and hence, by Theorem \ref{Theorem-Compl-Omega}, the connected
component of $Compl_\Omega$ containing $I$ does not contain any twistor lines.

\begin{proof}[Proof of Lemma \ref{Lemma-injectivity}]
Let $I_1,I_2$ be periods in  $Compl_\Omega^{\pm}$. 
Assume that $Sign(I_1)=Sign(I_2)$, that is, the associated hermitian forms $h_1$ on $(V_\RR,I_1)$ and $h_2$
on $(V_\RR,I_2)$ have the same signature $(k,l,m)$.

First we fix orthogonal decompositions  into maximal positive, negative
and null subspaces  $V_\RR=V_+\oplus V_- \oplus V_0$ of $(V_\RR,I_1)$
with respect to $h_1$ and  
$V_\RR=W_+\oplus W_- \oplus V_0$ of $(V_\RR,I_2)$ with respect to $h_2$, here $\dim_\CC V_+=k=\dim_\CC W_+$, 
$\dim_\CC V_-=l=\dim_\CC W_-$ and $V_0$ is the null subspace for $\Omega$, $\dim_\CC V_0=m$. 
Let us choose an $h_1$-orthonormal basis $v_1,\dots,v_k$ of the complex subpace $(V_+,I_1)$ and an $h_2$-orthonormal basis $w_1,\dots,w_k$ of 
the complex subpace $(W_+,I_2)$, similarly choose 
$-h_1$-orthonormal and $-h_2$-orthonormal bases
for the subspaces $V_-,W_-$, and some (arbitrary) bases for 
$(V_0,I_1)$ and $(V_0,I_2)$.  
Then we define $g\in G=GL(V_\RR)$ by setting $g(v_1)=w_1,g(I_1v_1)=I_2w_1, g(v_2)=w_2,g(I_1v_2)=I_2w_2,\dots,g(v_k)=w_k,g(I_1v_k)=I_2w_k$,
and similarly for the remaining pairs of subspaces. Then we get $g \in G$ such that $g^*h_2=h_1$ and $I_2=gI_1g^{-1}$, which is equivalent to 
saying that $g^*\Omega=\Omega$
and $I_2=gI_1g^{-1}={}^g\!I_1$. This implies that $g\in G_\Omega$ and the assumption that both $I_1$ and $I_2={}^g\!I_1$
belong to the same connected component of $Compl$ tells us that  $g$
actually belongs to the connected subgroup component of $G$ and hence to $G_\Omega^0$.  
So $I_2$ belongs to the orbit of $I_1$ under the action of $G_\Omega^0$, and they belong to the same connected component of 
$Compl_\Omega^{\pm}$.
\end{proof}

\begin{proof}[Proof of Lemma \ref{Lemma-surjectivity}]
Let $(n_+,n_-,n_0)$ be a triple $\in S_\Omega$,  
$n_++n_-+n_0=2n$. 
Let us prove that there exists $I \in Compl_\Omega^{\pm}$ such that the hermitian form $h$ associated to $\Omega$ and $I$
has signature $(n_+,n_-,n_0)$. 
Choose a basis  $v_1,\dots,v_{4n}$ of $V_\RR$ such that the skew-symmetric matrix $\Omega(v_i,v_j)$
has $2\times 2$-block diagonal structure and set  $L_\Omega\colon V_\RR \to V_\RR$
to be the operator defined by this matrix in the chosen basis.
By the definition of $n_0$ there are $n_++n_-=2n-n_0$ nonzero $2\times 2$-blocks and $n_0$ zero $2\times 2$-blocks in the matix of $L_\Omega$. 
Let $V_\RR=P_1\oplus \dots \oplus P_{n_+} \oplus Q_1\oplus \dots \oplus Q_{n_-}\oplus U$ be a direct sum decomposition into $L_\Omega$-invariant
real 2-planes $P_j,Q_j$ and $2n_0$-dimensional real subspace $U=Ker\,L_\Omega$, where $L_\Omega|_{P_i},L_\Omega|_{Q_j}$ are  nonzero and $L_\Omega|_U=0$. 
Define $I\colon P_i\to P_i,1\leqslant i \leqslant n_+$, to be the appropriate negative multiple of $L_\Omega|_{P_i}$, 
$I\colon Q_j\to Q_j,1\leqslant j \leqslant n_-$, to be the appropriate positive multiple of $L_\Omega|_{Q_j}$, and $I\colon U \to U$
to be an arbitrary operator so as to form an operator $I \colon V_\RR \to V_\RR$ satisfying $I^2=-Id$. Then 
the decomposition $V_\RR=P_1\oplus \dots \oplus P_{n_+} \oplus Q_1\oplus \dots \oplus Q_{n_-}\oplus U$ is orthogonal with respect to $h(u,v)=\Omega(u,Iv)-i\Omega(u,v)$,
$h|_{P_i}>0$, $h|_{Q_j}<0$ and $h|_U=0$. Thus we have constructed the required $I$, this proves the surjectivity of $\widetilde{Sign}$. 
\end{proof}

\begin{proof}[Proof of Theorem \ref{Theorem-Compl-Omega}]
The part of the statement about the bijectivity of $\widetilde{Sign}$
and the $G_\Omega^0$-orbit structure of the connected components
of $Compl_\Omega^{\pm}$ follows from Lemma \ref{Lemma-injectivity}
and Lemma \ref{Lemma-surjectivity}. 

\medskip

{\bf The complex manifold structure.}
In order to show that every connected component of $Compl_\Omega^{\pm}$
is a complex submanifold in $Compl$ we need to identify the tangent
space $T_ICompl_\Omega$ as a complex subspace in $T_ICompl$ for every $I \in Compl_\Omega$. We recall that the complex structure operator $l_I\colon T_ICompl \to T_ICompl$  is given by left
multiplication by $I$. 
By definition $G_\Omega=\{g\in G\,|\,{}^t\!g \Omega g =\Omega\}\subset G$, so
$$T_eG_\Omega \cong \{Y \in T_eG\,|\, {}^t\!Y\Omega+\Omega Y=0\}.$$  
The stabilizer of $I$ in $G_\Omega$ is  $G_{I,G}=G_I \cap G_\Omega$,
thus $T_ICompl_\Omega \cong T_eG_\Omega/T_eG_{I,\Omega}$.

For $X \in T_eG_\Omega$, consider the decomposition $X=\frac{1}{2}(X+X^I)+\frac{1}{2}(X-X^I)$ into $I$-commuting and $I$-anticommuting components. Then
$\frac{1}{2}(X+X^I)$ and $\frac{1}{2}(X-X^I)$ satisfy the equality ${}^t\!Y\Omega+\Omega Y=0$,  
so that $\frac{1}{2}(X+X^I) \in T_eG_{I,\Omega}$ and thus the isomorphism $T_ICompl_\Omega \cong T_eG_\Omega/T_eG_{I,\Omega}$  implies the natural isomorphism
\begin{equation}
\label{T-I-Compl-Omega}
T_ICompl_\Omega \cong \{Y \in T_eG\,|\,
YI=-IY,\,{}^t\!Y\Omega+\Omega Y=0\}.
\end{equation}  
Thus $T_ICompl_\Omega$ is obviously $l_I$-invariant, so it is a complex subspace in $T_ICompl$ for every $I\in Compl_\Omega$, which shows that every connected component of $Compl_\Omega$, or, which is the same, 
of $Compl_\Omega^{\pm}$, is a complex submanifold in $Compl$. 
\medskip

{\bf The component containing a twistor line.} 
We first note that $Compl_\Omega$ contains  a twistor line $S=S(I,J)$
if and only if $\Omega$ is both $I$- and $J$-invariant, that is,
$I,J \in Compl_\Omega$. 

Let us assume that $\Omega$ is invariant with respect to anticommuting
complex structure operators $I$ and $J$,
that is $\Omega(Iu,Iv)=\Omega(Ju,Jv)=\Omega(u,v)$ for all $u,v\in V_\RR$. 
Then for all $u,v\in V_\RR$ the form $h$ associated to $\Omega$ and $I$ satisfies  
$$h(Ju,Jv)\overset{def}{=}\Omega(Ju,IJv)-i\Omega(Ju,Jv)=
-\Omega(Ju,JIv)-i\Omega(Ju,Jv)=$$
$$=-\Omega(u,Iv)-i\Omega(u,v)=-\overline{h(u,v)}.$$
This implies that the hermitian forms $h(u,v)$ and $-\overline{h(u,v)}$ are equivalent under $J$, therefore they must have the same signature, 
so that the signature $(n_+,n_-,n_0)$ of $h(u,v)$  and the signature $(n_-,n_+,n_0)$ of $-\overline{h(u,v)}$ 
are equal, that is, $n_+=n_-$. It also follows that $n_0$ is even.
 
Conversely, if, for the hermitian form $h$ on $(V_\RR,I)\times (V_\RR,I)$ associated to $\Omega$ and $I$, we have the equality $n_+=n_-$, 
we construct a complex structure operator $J\colon V_\RR \to V_\RR$ anticommuting with $I$
and leaving $\Omega$ invariant as follows. 
Fix an $h$-orthogonal decomposition $V_\RR=V_+\oplus V_-\oplus V_0$ , where
$V_+$, $V_-$, 
$V_0$ are maximal positive, negative and null subspaces of $h$, 
which are complex subspaces of $(V_\RR,I)$,
and put $k:=\dim_\CC V_+=n_+=n_-=\dim_\CC V_-,
l:=n_0=\dim_\CC V_0$.
 Note that since $n_+ +n_- + n_0=2k+l=2n$, $l=n_0$ is an even number. 

Let $u_1,\dots, u_{k}$ and 
 $v_1,\dots, v_{k}$ be $h$-orthonormal and, respectively, $-h$-orthonormal bases of the complex subspaces
 $(V_+,I)$ and $(V_-,I)$.
  Let $w_1,\dots, w_{l}$ be any basis of the complex subspace $(V_0,I)$.  
Define $J\colon V_\RR \to V_\RR$ by setting 
$$J(u_1)=v_1, J(Iu_1)=-Iv_1, \dots, J(u_k)=v_k, J(Iu_k)=-Iv_k,$$
$$J(v_1)=-u_1, J(Iv_1)=Iu_1, \dots, J(v_k)=-u_k, J(Iv_k)=Iu_k$$
and, using that $l$ is an even number, by setting
$$J(w_1)=w_2, J(Iw_1)=-Iw_2, J(w_2)=-w_1, J(Iw_2)=Iw_1, \dots$$
$$ J(w_{l-1})=w_l, J(Iw_{l-1})=-Iw_l, J(w_l)=-w_{l-1}, J(Iw_l)=Iw_{l-1}.$$

Then $J$ anticommutes with $I$ and one easily verifies that $J$ takes $h$ to $-\overline{h}$,
which, by the definition of $h$, means that  
$$\Omega(Ju,IJv)-i\Omega(Ju,Jv) = h(Ju,Jv) =-\overline{h(u,v)} =
-\Omega(u,Iv)-i\Omega(u,v),$$
so that $\Omega(Ju,Jv)=\Omega(u,v)$, that is, $\Omega$  is $J$-invariant.

\medskip

{\bf Twistor-path connectivity of $Sign^{-1}(n-\frac{n_0}{2},n-\frac{n_0}{2},n_0)$.} 
We follow the general lines in
Section \ref{subsecPhi}. Let $S=S(I,J)$ be a twistor line in $Sign^{-1}(n-\frac{n_0}{2},n-\frac{n_0}{2},n_0)$.

Consider the subgroups $G_{J,\Omega}=G_J \cap G_\Omega,
G_{K,\Omega}=G_K \cap G_\Omega$, 
$G_{\mathbb{H},\Omega}=G_{J,\Omega}\cap G_I=G_{K,\Omega}\cap G_I$ and define the mapping
$$\Phi\colon G_{J,\Omega} \times G_{K,\Omega} \to Compl_\Omega^{\pm},$$
$$(g_1,g_2)\mapsto {}^{g_1g_2}\!I \in Compl_\Omega^{\pm}.$$

In order to proceed as in Section \ref{subsecPhi} we need to show that
$d\Phi_{(e,e)}$ is surjective. 
Again, as in the proof of Proposition
\ref{Injectivity-proposition},
 one can show that $Ker\,d\Phi_{(e,e)}=T_eG_{\mathbb{H},\Omega}\times T_eG_{\mathbb{H},\Omega}$,
so that $$d\Phi_{(e,e)}(T_eG_\Omega)\cong T_eG_{J,\Omega}/T_eG_{\mathbb{H},\Omega} \times T_eG_{K,\Omega}/T_eG_{\mathbb{H},\Omega}
\subset T_ICompl_\Omega.$$


To show the surjectivity of $d\Phi_{(e,e)}$ we need another inclusion $$T_ICompl_\Omega\subset T_eG_{J,\Omega}/T_eG_{\mathbb{H},\Omega} \times T_eG_{K,\Omega}/T_eG_{\mathbb{H},\Omega}.$$ 
For this we use the identification in Equation (\ref{T-I-Compl-Omega}) 
and first note that, for $Z\in T_ICompl_\Omega$
($Z$ anticommutes with $I$) the vector $Z+Z^J$ commutes with $J$
and anticommutes with $I$, hence anticommutes with $K$ as well. 
Next, by $J$-invariance of $\Omega$ we have that 
$Z+Z^J$ satisfies the relation with $\Omega$ in (\ref{T-I-Compl-Omega}), hence itself belongs to 
$$\{Y\in T_ICompl_\Omega\,|\, YJ=JY\}=T_eG_{J,\Omega}/T_eG_{\mathbb{H},\Omega}.$$ 
Now $Z-\frac{1}{2}(Z+Z^J)=\frac{1}{2}(Z-Z^J)$ commutes with $K$,
so that, due to the fact that $Z+Z^J$ anticommutes with $K$, 
 we have that $\frac{1}{2}(Z-Z^J)$ is equal to $\frac{1}{2}(Z+Z^K)$. 
Again, by $K$-invariance of $\Omega$ we have  $Z+Z^K \in T_eG_{K,\Omega}/T_eG_{\mathbb{H},\Omega}$,
and so  we may write 
$$Z=\frac{1}{2}(Z+Z^J)+\frac{1}{2}(Z+Z^K) \in T_eG_{J,\Omega}/T_eG_{\mathbb{H},\Omega}\times T_eG_{K,\Omega}/T_eG_{\mathbb{H},\Omega}.$$ 
Thus we have shown the inclusion $$T_ICompl_\Omega \subset T_eG_{J,\Omega}/T_eG_{\mathbb{H},\Omega}\times T_eG_{K,\Omega}/T_eG_{\mathbb{H},\Omega}$$
and we may finally write 

$$
T_ICompl_\Omega = T_eG_{J,\Omega}/T_eG_{\mathbb{H},\Omega}\times T_eG_{K,\Omega}/T_eG_{\mathbb{H},\Omega},
$$
which proves the surjectivity of our $d\Phi_{(e,e)}$. 
Now we can directly 
proceed with the ``three lines argument'' as in the proof of Theorem \ref{Theorem-connectivity}
to establish twistor path connectivity of  the 
connected component $Sign^{-1}(n-\frac{n_0}{2},n-\frac{n_0}{2},n_0)$ of $Compl_\Omega^{\pm}$. 
\end{proof}

\begin{rem}
In the proof of Theorem \ref{Theorem-Compl-Omega} we used
a particular choice of points $J,K \in S=S(I,J)$ around which we rotate
our twistor line $S$. In fact, we could use any choice of $I_1,I_2\in S$
such that $I,I_1,I_2$ are linearly independent in $End\,V_{\mathbb{R}}$.
Indeed, defining in a natural way the mapping
$$\Phi\colon G_{I_1,\Omega}\times G_{I_2,\Omega} \to Compl_\Omega^{\pm},$$
$$(g_1,g_2)\mapsto {}^{g_1g_2}\!I,$$
we get, as usually, that $Im\,d\Phi_{(e,e)}\cong 
T_eG_{I_1,\Omega}/T_eG_{\mathbb{H},\Omega}\times G_{I_2,\Omega}/T_eG_{\mathbb{H},\Omega} \subset T_ICompl_\Omega \cong 
T_eG_{J,\Omega}/T_eG_{\mathbb{H},\Omega}\times G_{K,\Omega}/T_eG_{\mathbb{H},\Omega}$, the last isomorphism was proved in 
Theorem \ref{Theorem-Compl-Omega}. Now we define the mapping
$$T_ICompl_\Omega \cong T_eG_{J,\Omega}/T_eG_{\mathbb{H},\Omega}\times G_{K,\Omega}/T_eG_{\mathbb{H},\Omega} \to T_eG_{I_1,\Omega}/T_eG_{\mathbb{H},\Omega}\times T_eG_{I_2,\Omega}/T_eG_{\mathbb{H},\Omega},$$
$$Y\mapsto (Y+Y^{I_1},Y+Y^{I_2}).$$
As $I_1,I_2 \in Compl_\Omega$, this mapping  is indeed correctly defined. 
Again, conveniently identifying $$T_eG_{J,\Omega}/T_eG_{\mathbb{H},\Omega}
\cong \{Z \in End\,V_\mathbb{R}\,|\, ZI=-IZ,ZJ=JZ, Z^t\Omega=\Omega Z \},$$ and similarly for $T_eG_{K,\Omega}/T_eG_{\mathbb{H},\Omega}
$, 
it is not hard to verify that this mapping has zero kernel. Note that for general $I_1,I_2$ we do not have that the image of a vector $Y$
anticommuting with $I$ under our mapping is again a vector anticommuting with $I$, so
in the verification we do not identify the target $T_eG_{I_1,\Omega}/T_eG_{\mathbb{H},\Omega}\times T_eG_{I_2,\Omega}/T_eG_{\mathbb{H},\Omega}$ with $I$-anticommuting vectors and consider it just as it is. Thus
$$T_ICompl_\Omega \cong T_eG_{I_1,\Omega}/T_eG_{\mathbb{H},\Omega}\times T_eG_{I_2,\Omega}/T_eG_{\mathbb{H},\Omega}$$
and so $d\Phi_{(e,e)}$ is surjective, allowing us to proceed with the proof of connectivity in a usual way.
\end{rem}


\begin{thebibliography}{10}

\bibitem{Barth} 
Barth, W., Hulek, K., Peters, C., van de Ven, A., {\it Compact complex surfaces}, 2nd ed. 1995, XII, 436 p.

\bibitem{Beauvillec1=0}
Beauville, A., {\it Vari\'et\'es K\"ahl\'eriennes dont la premi\`ere classe de Chern est nulle}, J. of Diff. Geometry {\bf 18}, 755-782 (1983).

\bibitem{Bea-Geometrie}
Beauville, A., et al. {
G\'eom\'etrie des surfaces $K3$ :  modules et p\'eriodes}, Papers from
the seminar held in Palaiseau, Asterisque {\bf 126} (1985), pp.1-193.

\bibitem{BirkenhakeLange99}
Birkenhake, C., and Lange, H., \emph{Complex tori}, Progress in Mathematics, vol.
  177, Birkh\"auser, Springer, Boston, New York, 1999.

\bibitem{Bourbaki} Huybrechts, D., {\it A global Torelli theorem for hyper-K\"ahler manifolds [after M. Verbitsky]}, Ast\'erisque No. 348 (2012), pp. 375-403.

\bibitem{Buskin} Buskin, N., {\it Every rational Hodge isometry between 
two K3 surfaces is algebraic}, 
 Journal f\"ur die reine und angewandte Mathematik (Crelles Journal), ISSN (Online) 1435-5345, ISSN (Print) 0075-4102.

\bibitem{GrHarr} Griffiths, P., Harris, J., {\it Principles of complex algebraic geometry}, John Wiley \& Sons Inc., 1978, 813 p.

\bibitem{Hitchin} Hitchin, N. J., Karlhede, A., Lindstr\"om, U.,  Ro\v{c}ek, M., {\it Hyper-K\"ahler metrics and supersymmetry}, Comm. Math. Phys. Volume 108, Number 4 (1987), pp. 535-589.

\bibitem{Verb-book} Kaledin, D., Verbitsky, D., {\it Hyperkahler Manifolds}, HEP Frontiers Online, 2010, 261 p.,
available also at {\it https://urldefense.com/v3/\_http://imperium.lenin.ru/*verbit/MATH/BOOK/book.pdf\_;fg!!Mih3wA!UYgNyS0CAkiOvSk6l9dEjFH0POn6uWfrn6H5JxMPMJFKxovck0VCTl-EZYPlbB4}

\bibitem{Kollar} Koll\'ar, J., {\it Rational Curves on Algebraic Varieties}, Springer-Verlag, Berlin, New York, 1996.

\bibitem{Verb-deform} Verbitsky, M., {\it Hyperholomorphic bundles over a hyper-K\"ahler manifold}, J. Alg. Geom.5 (1996), pp. 633-669.

\bibitem{Verb-Torelli} Verbitsky, M., {\it Mapping class group and a global Torelli theorem for hyperkähler manifolds}, Duke Math. J. Volume 162, Number 15 (2013), 2929-2986.

\bibitem{Voisin} Voisin, C., {\it Hodge theory and complex algebraic geometry, I}, Cambridge University Press, New York, 2002, ix+322 pp.

\end{thebibliography}
\end{document}